\newtheorem{theorem}{Theorem}[section]
\newtheorem{lemma}[theorem]{Lemma}
\newtheorem{corollary}[theorem]{Corollary}
\newtheorem{proposition}[theorem]{Proposition}
\newtheorem{definition}[theorem]{Definition}
\newtheorem{remark}[theorem]{Remark}
\newtheorem{example}[theorem]{Example}
\newcommand{\Cl}{\operatorname{Cl}}
\newcommand{\Po}{\operatorname{Po}}
\newcommand{\Gal}{\operatorname{Gal}}
\newcommand{\disc}{\operatorname{disc}}
\newcommand{\Norm}{\mathcal{N}}
\newcommand{\OO}{\mathcal{O}}
\newcommand{\UU}{U}
\newcommand{\coloneqq}{\mathrel{\mathop:}=}
\title{Computing $p$-Class Group Structure in Real Quadratic Fields: A New Approach}
\author{Farahnaz Amiri}
\date{}
\begin{document}

\maketitle

\begin{abstract}
This article is the first in a series devoted to computing the class groups of real quadratic fields. We present a new relation between the class number and the index of unit groups. This relation generalizes Hilbert class field theory for real quadratic fields and establishes a bridge between class field theory, composition laws of binary forms of degree $p^n$, and ideal classes of order $p^n$, where $p$ is prime and $n$ is an arbitrary positive integer.




In this new framework, the well-known Hilbert class field becomes just one special example in a much larger family (see Remark \ref{Rema:Hilbert-identity}). While classical class field theory gives important theoretical insights, it is often not practical for determining the exact order of an ideal.  To solve this problem, we develop new composition laws that work for ideals of any order. This goes far beyond the previously known cases of order 2 and 3—established by Gauss and Bhargava, respectively. This gives us a powerful and concrete method to study the class group, leading to several new results.

\end{abstract}

\textbf{Keywords:} Class groups, capitulation, Polya groups, norm maps, binary forms, ramified extensions, real quadratic fields, Galois cohomology

\section{Introduction}

In this paper, we propose a new method for studying the class groups of real quadratic fields. To contextualize our work, we begin by recalling three foundational approaches: composition laws, Hilbert class field theory, and the analytic class number formula. We will outline these methods and discuss how they can be further developed.

\subsection{Historical Context and Higher Composition Laws}
So far, the theory linking composition laws to a correspondence between polynomials of degree $p$ and ideals of order $p$ has only been developed for the primes $2$ and $3$. We begin by briefly discussing these foundational cases before presenting our results for an arbitrary prime $p$.

In fact, we study ideals of order $p^n$ in the class groups of real quadratic fields, for any prime $p$ and integer $n$. Previous research was largely confined to the $2$- and $3$-parts, with key contributions from Gauss \cite{Gauss1863}, Davenport \cite{davenportheilbronn1971}, and Bhargava \cite{BhargavaVarma2016}. Our work generalizes Gauss's composition laws to all odd primes $p$, and specifically to the $p^n$-torsion components of the class group.
\subsubsection{Genus Theory and the 2-part}

A cornerstone of this subject is the classical connection between binary quadratic forms and ideal classes in quadratic orders, formally captured by the following theorem.

\begin{theorem}[\cite{Cohen2013}, Theorems 5.2.8 and 5.2.9]
Let $D$ be a non-square integer.
\begin{itemize}
\item If $D < 0$, there exists a bijection between the $\operatorname{SL}_2(\mathbb{Z})$-equivalence classes of positive definite, primitive, integral binary quadratic forms of discriminant $D$ and the ideal class group of the quadratic order of discriminant $D$.

\item If $D > 0$, there exists a bijection between the $\operatorname{SL}_2(\mathbb{Z})$-equivalence classes of primitive, integral binary quadratic forms of discriminant $D$ and the \emph{narrow} ideal class group of the quadratic order of discriminant $D$.
\end{itemize}
\end{theorem}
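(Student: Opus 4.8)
The plan is to exhibit explicit, mutually inverse maps between forms and ideal classes and then check that they respect the stated equivalence relations. Fix the quadratic order $\OO = \OO_D$ of discriminant $D$ inside $K = \Q(\sqrt{D})$, so that $\OO = \Z + \Z\,\tfrac{D+\sqrt{D}}{2}$. To a primitive integral form $f(x,y) = ax^2 + bxy + cy^2$ with $b^2 - 4ac = D$ — normalized, when $D < 0$, so that $a > 0$ — associate the $\Z$-module $\mathfrak{a}_f = \Z\,a + \Z\,\tfrac{-b+\sqrt{D}}{2}$. First I would verify that $\mathfrak{a}_f$ is a \emph{proper} fractional $\OO$-ideal: its ring of multipliers is exactly $\OO$ and it is invertible, and primitivity of $f$ is precisely what makes this hold, with $\Norm(\mathfrak{a}_f) = |a|$. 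Conversely, to a proper fractional ideal $\mathfrak{a}$ equipped with an ordered $\Z$-basis $(\alpha,\beta)$ that is \emph{oriented}, meaning $(\alpha\bar\beta - \bar\alpha\beta)/\sqrt{D} > 0$, associate $q_{\mathfrak{a},(\alpha,\beta)}(x,y) = \Norm_{K/\Q}(\alpha x + \beta y)/\Norm(\mathfrak{a})$; one checks this is an integral form of discriminant $D$, that it is primitive exactly because $\mathfrak{a}$ is proper, and that for $D < 0$ the orientation condition forces it to be positive definite.

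Next I would show both constructions descend to the quotients. Replacing the oriented basis $(\alpha,\beta)$ by $(\alpha,\beta)\gamma$ with $\gamma \in \operatorname{SL}_2(\Z)$ transforms $q_{\mathfrak{a},(\alpha,\beta)}$ by the substitution $\gamma$ — and the orientation condition is exactly what rules out elements of $\operatorname{GL}_2(\Z) \setminus \operatorname{SL}_2(\Z)$ — so the $\operatorname{SL}_2(\Z)$-class of the form depends only on $\mathfrak{a}$ together with its orientation. Multiplying $\mathfrak{a}$ by $\lambda \in K^\times$ rescales the form by $\Norm(\lambda)/|\Norm(\lambda)|$: when $D < 0$ this factor is always $+1$ because $\Norm(\lambda) = |\lambda|^2 > 0$, so the $\operatorname{SL}_2(\Z)$-class depends only on the ordinary ideal class; when $D > 0$ the factor is $+1$ if and only if $\lambda$ is totally positive, which is exactly why the \emph{narrow} class group — proper ideals modulo totally positive principal ideals — is the correct target. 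In the reverse direction, an analogous basis-change computation shows that $\operatorname{SL}_2(\Z)$-equivalent forms determine the same (narrow) ideal class.

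Finally I would check that the two maps are mutually inverse. Starting from $\mathfrak{a}_f$ with its natural oriented basis $\big(a,\ \tfrac{-b+\sqrt{D}}{2}\big)$, a direct expansion of $\Norm_{K/\Q}(ax + \tfrac{-b+\sqrt{D}}{2}y)/|a|$ returns $f$; conversely, after rescaling a general proper ideal by a suitable element and fixing the orientation one may bring it to the standard shape $\Z\,a + \Z\,\tfrac{-b+\sqrt{D}}{2}$, and then $q_{\mathfrak{a}}$ recovers $f$ and hence $\mathfrak{a}_f$ is the original ideal class. I expect the main obstacle to be the orientation bookkeeping and, in the real case $D > 0$, the careful matching of the narrow class group with $\operatorname{SL}_2(\Z)$-classes of forms: one must track the effect of units of norm $-1$ and of the nontrivial automorphism of $K$, and confirm that the correspondence neither loses nor doubles information. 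A secondary but pervasive technical point, when $D$ is not a fundamental discriminant, is to work consistently with \emph{proper} (invertible) ideals of the non-maximal order $\OO_D$ rather than with arbitrary $\OO_D$-ideals, since it is properness — not mere integrality — that mirrors primitivity of the form, and this equivalence must be threaded through every step.
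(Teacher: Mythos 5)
The paper does not actually prove this statement---it is quoted as background with a citation to Cohen's Theorems 5.2.8 and 5.2.9---and your sketch is precisely the standard argument underlying those cited results: the explicit maps $f \mapsto \Z a + \Z\tfrac{-b+\sqrt{D}}{2}$ and $(\mathfrak{a},(\alpha,\beta)) \mapsto \Norm_{K/\Q}(\alpha x + \beta y)/\Norm(\mathfrak{a})$, with properness of the ideal mirroring primitivity of the form and orientation/total positivity accounting for the narrow class group when $D>0$. The one detail to fix is the sign convention: with the basis ordered exactly as you wrote it, the composite returns $f(x,-y)$ rather than $f$ (these are $\operatorname{GL}_2(\Z)$- but not in general $\operatorname{SL}_2(\Z)$-equivalent), so you must either reverse the orientation of the standard basis or use $\Norm_{K/\Q}(\alpha x - \beta y)$---a point subsumed under the orientation bookkeeping you already flagged as the main obstacle.
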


In modern language, this correspondence has been generalized, as seen in the following result of Wood.

\begin{theorem}[Wood \cite{Wood2011}]
There exists a discriminant-preserving bijection:
\begin{footnotesize}
\[
\left\{\begin{array}{c}\text{twisted } \operatorname{GL}_2(\mathbb{Z}) \text{ equivalence classes}\\
\text{of non-degenerate binary}\\
\text{quadratic forms}\end{array}\right\}
\longleftrightarrow
\left\{\begin{array}{c}\text{isomorphism classes of } (C,I), \text{ where}\\
C \text{ is a non-degenerate oriented quadratic}\\
\text{ring, and } I \text{ is a full ideal class of } C\end{array}\right\}.
\]
\end{footnotesize}
That is, if $f \leftrightarrow (C,I)$, then $\operatorname{disc} f = \operatorname{disc} C$.
\end{theorem}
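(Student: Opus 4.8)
The plan is to construct the bijection explicitly in both directions and then verify its compatibility with the two equivalence relations and with the discriminant. Throughout I will use a concrete model over $\mathbb{Z}$: an \emph{oriented quadratic ring} is a pair $(C,\delta)$ where $C$ is a commutative ring free of rank $2$ over $\mathbb{Z}$ and $\delta\in C\otimes\mathbb{Q}$ is one of the two elements with $\bar{\delta}=-\delta$ and $\delta^2=\operatorname{disc}(C)$, the bar denoting the canonical involution; it is \emph{non-degenerate} exactly when $\operatorname{disc}(C)\ne 0$, i.e.\ when $C\otimes\mathbb{Q}$ is étale (a quadratic field, or $\mathbb{Q}\times\mathbb{Q}$). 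A \emph{full ideal} is a nonzero $C$-submodule $I\subseteq C\otimes\mathbb{Q}$ that is free of rank $2$ over $\mathbb{Z}$, and an ordered basis $(\alpha,\beta)$ of $I$ is \emph{positively oriented} if $\alpha\bar{\beta}-\bar{\alpha}\beta\in\mathbb{Q}_{>0}\,\delta$. Given a non-degenerate form $f=ax^2+bxy+cy^2$ with $\Delta:=b^2-4ac\ne 0$, I would fix a square root $\sqrt{\Delta}$ in $K=\mathbb{Q}(\sqrt{\Delta})$ (read as $\mathbb{Q}\times\mathbb{Q}$ with the swap when $\Delta$ is a square), put $C_f=\mathbb{Z}+\mathbb{Z}\cdot\tfrac{\Delta+\sqrt{\Delta}}{2}$ with orientation $\delta=\sqrt{\Delta}$, and set $I_f=\mathbb{Z}\cdot a+\mathbb{Z}\cdot\tfrac{-b+\sqrt{\Delta}}{2}$; the only thing to check here is that $I_f$ is stable under multiplication by $C_f$, a one-line computation using $\Delta\equiv b^2\pmod 4$. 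This assigns to $f$ the isomorphism class of $(C_f,\delta,I_f)$.

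For the reverse map, given $(C,\delta,I)$ I would pick a positively oriented $\mathbb{Z}$-basis $(\alpha,\beta)$ of $I$ and set
\[
Q_{(C,I)}(x,y)\;=\;\frac{N_{C\otimes\mathbb{Q}/\mathbb{Q}}(\alpha x+\beta y)}{N(I)},
\]
where $N(I):=[C:\gamma I]\,/\,|N(\gamma)|$ for any $\gamma\in C\setminus\{0\}$ with $\gamma I\subseteq C$ (one checks this is independent of $\gamma$). Expanding, $N(\alpha x+\beta y)=(\alpha\bar{\alpha})x^2+(\alpha\bar{\beta}+\bar{\alpha}\beta)xy+(\beta\bar{\beta})y^2$, and the key lemma is that $N(I)$ divides each coefficient, so $Q_{(C,I)}$ is an integral form. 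When $I\subseteq C$ this is precisely where one uses that $I$ is a $C$-\emph{module}: multiplication by any $\gamma\in I$ carries $C$ into $\gamma C\subseteq I$, so $N(I)=[C:I]$ divides $[C:\gamma C]=|N(\gamma)|$; taking $\gamma=\alpha,\ \beta,\ \alpha+\beta$ and subtracting gives divisibility of all three coefficients, and the fractional case follows by clearing denominators.

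Next come the three routine-but-careful checks. First, replacing $(\alpha,\beta)$ by another positively oriented basis is an $\operatorname{SL}_2(\mathbb{Z})$ change of variables on $Q_{(C,I)}$, a basis of the opposite orientation differs by a $\operatorname{GL}_2(\mathbb{Z})$-matrix of determinant $-1$, and replacing $I$ by a homothet $\lambda I$ multiplies $N(I)$ by $|N(\lambda)|$ while scaling the numerator by $N(\lambda)$; hence the class of $Q_{(C,I)}$ under \emph{twisted} $\operatorname{GL}_2(\mathbb{Z})$-equivalence (where $g$ acts by $f\mapsto(\det g)^{-1}\,f\circ g$) is well defined, and this explains why the twist, not plain $\operatorname{SL}_2(\mathbb{Z})$-equivalence, is the correct relation. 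Second, a direct expansion gives $\operatorname{disc}\!\big(N(\alpha x+\beta y)\big)=(\alpha\bar{\beta}-\bar{\alpha}\beta)^2$, and writing $(\alpha,\beta)$ in terms of a $\mathbb{Z}$-basis of $C$ shows $(\alpha\bar{\beta}-\bar{\alpha}\beta)^2=N(I)^2\operatorname{disc}(C)$; dividing by $N(I)^2$ yields $\operatorname{disc}(Q_{(C,I)})=\operatorname{disc}(C)$, the claimed discriminant-preservation. Third, the two maps are mutually inverse: from $f$ the basis $(a,\tfrac{-b+\sqrt{\Delta}}{2})$ of $I_f$ is positively oriented and the norm-form recipe returns $f$; conversely the module attached to $Q_{(C,I)}$ is homothetic to $I$ by a factor read off from the chosen basis, and the ring it produces has discriminant $\operatorname{disc}(C)$, hence is $C$ with the matching orientation.

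I expect the genuinely delicate part to be the uniform treatment of \emph{non-invertible} full ideals. For invertible $I$ one has $I\bar{I}=N(I)C$ and can lean on the classical dictionary of Cohen's Theorems~5.2.8--5.2.9, but in general $I\bar{I}\subsetneq N(I)C$, so the integrality of $Q_{(C,I)}$ and the surjectivity of the form-to-pair map must be argued directly via the index computation above, and one must confirm that the content of $f$ matches the appropriate imprimitivity invariant of $I$. A secondary, lower-risk but easy-to-botch point is pinning down the orientation conventions so that the determinant-$(-1)$ part of $\operatorname{GL}_2(\mathbb{Z})$ corresponds exactly to post-composition with the involution of $C$; getting this wrong would substitute the wrong equivalence relation on the form side and break either well-definedness or injectivity.
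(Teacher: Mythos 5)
The paper does not prove this statement: it is quoted verbatim from Wood \cite{Wood2011} as background, with no argument supplied, so there is no internal proof to compare yours against. Judged on its own terms, your plan is the standard one --- the Dedekind correspondence extended to arbitrary (not necessarily invertible) full modules, i.e.\ Wood's construction specialized to the base ring $\mathbb{Z}$ (equivalently, the quadratic case of Bhargava's Higher Composition Laws I). The skeleton is right, and the two computations you isolate are correct: the index argument $\gamma I\subseteq I$ for $\gamma\in I$ giving $N(I)\mid N(\gamma)$ (hence integrality of $N(\alpha x+\beta y)/N(I)$ via $\gamma=\alpha,\beta,\alpha+\beta$), and the identity $\operatorname{disc}\bigl(N(\alpha x+\beta y)\bigr)=(\alpha\bar{\beta}-\bar{\alpha}\beta)^2=N(I)^2\operatorname{disc}(C)$.

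Three points in your writeup are genuine gaps rather than routine checks. (1) The assignment $f\mapsto I_f=\mathbb{Z}a+\mathbb{Z}\tfrac{-b+\sqrt{\Delta}}{2}$ is not a rank-$2$ lattice when $a=0$, which occurs for non-degenerate forms of square discriminant such as $xy$; you must first normalize $f$ by $\operatorname{SL}_2(\mathbb{Z})$ so that $f(1,0)\neq 0$ and verify independence of that choice. (2) Your equivariance argument has a sign problem: the twisted action of $g$ with $\det g=-1$ sends $f$ to $-f\circ g$, whereas replacing a positively oriented basis of $I$ by an orientation-reversing one changes $N(\alpha x+\beta y)$ only to its pullback under $g$, not its negative, since your $N(I)$ is positive by definition. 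The standard repair is to define $Q_{(C,I)}$ with the \emph{signed} index of $I$ in $C$ (sign determined by comparing the orientation of the chosen basis of $I$ with $\delta$); without it the map is not well defined on twisted $\operatorname{GL}_2(\mathbb{Z})$-classes, and for $\operatorname{disc}<0$ the identification of $f$ with $-f$ is doing real work. (3) Bijectivity reduces to showing both composites are the identity, and the delicate composite is the one starting from $(C,I)$ with $I$ non-invertible: you must show the ring reconstructed from $Q_{(C,I)}$ is $C$ itself and not the (possibly strictly larger) multiplier ring of $I$, which is exactly the statement that the content of $Q_{(C,I)}$ accounts for the discrepancy; you flag this but do not carry it out. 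As a proposal the route is sound and is the one Wood actually takes; as a proof it is not yet complete at these three points.
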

Gauss's pioneering work on quadratic fields \cite{Gauss1863} led him to conjecture that there are infinitely many real quadratic fields with class number one. This conjecture remains open. A more recent, probabilistic approach was proposed by Cohen-Lenstra \cite{CohenLenstra2006}. Their heuristics predict the distribution of the $q$-parts of the class groups of real quadratic fields for integers $q$. Let $p(q)$ denote the probability that the $q$-part is non-trivial. Their model predicts, for example:

\[
p(3) \approx 12.574\%,\quad p(5) \approx 3.772\%,\quad p(7) \approx 1.796\%,\quad p(9) \approx 1.572\%.
\]
Regarding the odd part, only Bhargava and Davenport have to date provided a description of this conjecture for the 3-part of the class group. We recall it here.

\subsubsection{Bhargava's Higher Composition Laws and the 3-part}

Manjul Bhargava's higher composition laws furnish an effective framework for examining the 3-torsion part \cite{Bhargava2004}.  
Let \(V_{\mathbb{R}}\) be the four-dimensional real vector space consisting of binary cubic forms  
\(ax^{3}+bx^{2}y+cxy^{2}+dy^{3}\) with coefficients \(a,b,c,d\in\mathbb{R}\), and let \(V_{\mathbb{Z}}\) be the lattice of forms having integer coefficients.  
The group \(\operatorname{GL}_{2}(\mathbb{Z})\) acts on \(V_{\mathbb{R}}\) through the \emph{twisted action}: for \(\gamma\in\operatorname{GL}_{2}(\mathbb{Z})\) and a form \(f(x,y)\),

\[
(\gamma f)(x,y)=\frac{1}{\det\gamma}\,f\bigl((x,y)\cdot\gamma\bigr).
\]

This action leaves the lattice \(V_{\mathbb{Z}}\) invariant.  
Of particular interest is the sublattice \(V_{\mathbb{Z}}^{*}\) of \emph{integer-matrix} forms, i.e. forms that can be written as  
\(f(x,y)=ax^{3}+3bx^{2}y+3cxy^{2}+dy^{3}\) with \(a,b,c,d\in\mathbb{Z}\).  
The lattice \(V_{\mathbb{Z}}^{*}\) has index \(9\) in \(V_{\mathbb{Z}}\) and is likewise preserved by \(\operatorname{GL}_{2}(\mathbb{Z})\).  
On \(V_{\mathbb{Z}}^{*}\) we introduce the \emph{reduced discriminant}

\[
\operatorname{disc}(f)=-\frac{1}{27}\operatorname{Disc}(f)=-3b^{2}c^{2}+4ac^{3}+4b^{3}d-abcd .
\]

Bhargava’s central result is the following bijection.

\begin{theorem}[\cite{Bhargava2004}, Theorem 13]
There is a natural bijection between the set of nondegenerate $\operatorname{SL}_2(\mathbb{Z})$ orbits on the space $V_{\mathbb{Z}}^*$ of integer-matrix binary cubic forms and the set of equivalence classes of triples $(\OO, I, \delta)$, where:
\begin{itemize}
\item $\OO$ is a nondegenerate oriented quadratic ring over $\mathbb{Z}$,
\item $I$ is an ideal of $\OO$,
\item $\delta$ is an invertible element of $\OO \otimes \mathbb{Q}$ such that $I^3 \subseteq \delta \cdot \OO$ and $N(I)^3 = N(\delta)$.
\end{itemize}
Two triples $(\OO, I, \delta)$ and $(\OO', I', \delta')$ are equivalent if there is an isomorphism $\phi: \OO \to \OO'$ and an element $\kappa \in \OO' \otimes \mathbb{Q}$ such that $I' = \kappa \phi(I)$ and $\delta' = \kappa^3 \phi(\delta)$. Under this bijection, the reduced discriminant of a binary cubic form is equal to the discriminant of the corresponding quadratic ring.
\end{theorem}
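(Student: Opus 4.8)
The plan is to prove the bijection by exhibiting explicit, mutually inverse maps in both directions, organised around the principle that an integer-matrix binary cubic form is exactly an $S_{3}$-symmetric $2\times 2\times 2$ cube of integers, and that Gauss-type composition on such cubes degenerates, on the symmetric locus, to a single ideal class of order dividing $3$. This reduces the statement to a careful bookkeeping of bases and structure constants.

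First I would build the forward map $\Phi$. Given $f(x,y)=ax^{3}+3bx^{2}y+3cxy^{2}+dy^{3}\in V_{\Z}^{*}$, attach to $f$ the oriented quadratic ring $\OO=\OO(f)$ of discriminant $\disc(f)$, with its canonical $\Z$-basis $\{1,\tau\}$, the orientation being recorded by the choice of $\tau$ rather than its conjugate. Viewing $f$ as a symmetric cube $(a_{ijk})$ with $a_{111}=a$, $a_{112}=b$, $a_{122}=c$, $a_{222}=d$, the three coordinate slicings of the cube all collapse to one pair of rank-$2$ $\Z$-modules; reading the $a_{ijk}$ off as structure constants of a multiplication $I\times I\to \delta\OO$ produces an ideal $I=I(f)\subseteq\OO\otimes\Q$ and an invertible $\delta\in(\OO\otimes\Q)^{\times}$ with $I^{3}\subseteq\delta\OO$, and one imposes $N(I)^{3}=N(\delta)$ to kill the scaling freedom. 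I would then check that $f\mapsto\gamma f$ for $\gamma\in\operatorname{SL}_{2}(\Z)$ sends $(\OO,I,\delta)$ to an equivalent triple: $\gamma$ acts as a $\Z$-basis change of $I$, and $\det\gamma=1$ forces $\delta$ to change only by $\kappa^{3}$ for some $\kappa\in(\OO\otimes\Q)^{\times}$. Hence $\Phi$ is well defined on orbits, and $\disc(f)=\disc(\OO)$ by construction.

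Conversely I would construct $\Psi$. Given $(\OO,I,\delta)$, fix the canonical basis $\{1,\tau\}$ of the oriented ring $\OO$ and any $\Z$-basis $\{\alpha_{1},\alpha_{2}\}$ of $I$. The inclusion $I^{3}\subseteq\delta\OO$ says each $\alpha_{i}\alpha_{j}\alpha_{k}/\delta$ lies in $\OO$, so it has integral coordinates against $\{1,\tau\}$; the totally symmetric part of this array, interpreted through the symmetric-cube identification, is a binary cubic form $f=f(\OO,I,\delta)$ that automatically has the integer-matrix shape $ax^{3}+3bx^{2}y+3cxy^{2}+dy^{3}$. One checks $\disc(f)=\disc(\OO)$, and that replacing $\{\alpha_{1},\alpha_{2}\}$ by $M\{\alpha_{1},\alpha_{2}\}$ with $M\in\operatorname{GL}_{2}(\Z)$, together with a rescaling $\alpha\mapsto\kappa\alpha$, $\delta\mapsto\kappa^{3}\delta$, changes $f$ by the twisted action of $M$; since the equivalence on triples allows precisely these moves, while the norm condition $N(I)^{3}=N(\delta)$ pins $\kappa$ down up to sign and the orientation of $\OO$ removes that sign, $f$ is well defined up to $\operatorname{SL}_{2}(\Z)$-equivalence.

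It then remains to verify $\Phi\circ\Psi=\mathrm{id}$ and $\Psi\circ\Phi=\mathrm{id}$, which follows by evaluating both compositions on a common explicit basis, where they reduce to the same symmetric array of structure constants. The main obstacle I anticipate lies not in the generic case but in the degenerate algebra: when $I$ is non-invertible and $\OO$ is non-maximal or not a domain, one must show that the structure-constant recipe still outputs a primitive integer-matrix form, that nondegeneracy of the $\operatorname{SL}_{2}(\Z)$-orbit matches nondegeneracy of $\OO$ exactly, and, most delicately, that $\delta$, which a priori is only determined modulo $\OO^{\times}$ and modulo cubes, is confined to precisely the stated equivalence class by the combination of the norm normalization and the orientation datum, so that no two inequivalent orbits are merged and no single orbit splits. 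Establishing this rigidity is the technical heart of the proof.
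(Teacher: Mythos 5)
This statement is quoted background: the paper cites it verbatim as Theorem~13 of Bhargava's \emph{Higher composition laws I} and supplies no proof of its own, so there is no internal argument to measure your attempt against. Judged against Bhargava's published proof, your plan does follow the right route: an integer-matrix binary cubic form is the triply symmetric specialisation of a $2\times2\times2$ integer cube, and the triple $(\OO,I,\delta)$ with $I^{3}\subseteq\delta\OO$ and $N(I)^{3}=N(\delta)$ is exactly what the balanced-triple condition $I_{1}I_{2}I_{3}\subseteq\OO$ of the cube correspondence degenerates to on the symmetric locus. Two cautions, though. First, what you have written is an outline, not a proof: the substantive content of Bhargava's argument is precisely the part you defer, namely that the structure-constant recipe actually defines a ring and module structure (closure and associativity are forced by, and in turn force, the discriminant identity), that the construction remains a bijection for non-invertible ideals and non-maximal or non-domain rings, and that the ambiguity in $\delta$ is exactly absorbed by the stated equivalence. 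Asserting that ``the main obstacle'' lies there does not discharge it. Second, there are small inaccuracies: the multiplication whose structure constants you read off is the trilinear map $I\times I\times I\to\delta\OO$, $(\alpha_i,\alpha_j,\alpha_k)\mapsto\alpha_i\alpha_j\alpha_k/\delta$, not a bilinear one $I\times I\to\delta\OO$; and the norm condition $N(I)^{3}=N(\delta)$ is a balancing constraint on the triple rather than something that ``pins $\kappa$ down,'' since the equivalence relation deliberately leaves $\kappa$ free. If you intend to carry this out, the cleanest course is to prove the cube theorem first and then restrict, exactly as Bhargava does, rather than building the binary-cubic case from scratch.
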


This geometric approach yielded profound arithmetic consequences. For instance, it enabled the precise computation of the average size of the 3-torsion in class groups.

\begin{theorem}[\cite{BhargavaVarma2016}, Theorem 2]
When orders in quadratic fields are ordered by their absolute discriminants:
\begin{itemize}
\item The average number of $3$-torsion elements in the class groups of imaginary quadratic orders is $1 + \frac{\zeta(2)}{\zeta(3)}$.
\item The average number of $3$-torsion elements in the class groups of real quadratic orders is $1 + \frac{1}{3} \cdot \frac{\zeta(2)}{\zeta(3)}$.
\end{itemize}
Note that $\frac{\zeta(2)}{\zeta(3)} \approx 1.36843 > 1$.
\end{theorem}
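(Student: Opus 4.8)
The plan is to reduce both averages to the Davenport--Heilbronn asymptotic count of cubic fields of bounded discriminant (\cite{davenportheilbronn1971}), in the form refined to cubic \emph{orders}, by way of the parametrization recalled above. Throughout, the statement concerns the mean of $h_3(\OO)\coloneqq|\Cl(\OO)[3]|$, the number of $3$-torsion elements (including the identity) in the class group of a quadratic order $\OO$.

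I would first dispatch the maximal orders, where the argument is class field theory together with Davenport--Heilbronn. For a quadratic field $K$ of discriminant $d_K$, class field theory puts the unramified cyclic cubic extensions of $K$ in bijection with the index-$3$ subgroups of $\Cl(K)$, of which there are $(h_3(K)-1)/2$; each such extension is the Galois closure of a non-Galois cubic field $L$ with $\disc L=d_K$, and each such $L$ (taken up to isomorphism, hence identified with its conjugate fields) arises exactly once, being totally real precisely when $d_K>0$. Thus $h_3(K)=1+2\,\#\{L\text{ cubic}:\disc L=d_K\}$. Summing over $|d_K|<X$ and inserting the Davenport--Heilbronn densities --- $\#\{L\text{ totally real cubic}:0<\disc L<X\}\sim\frac{X}{12\zeta(3)}$ and $\#\{L\text{ complex cubic}:-X<\disc L<0\}\sim\frac{X}{4\zeta(3)}$ --- together with the elementary count $\sim\frac{X}{2\zeta(2)}$ of quadratic fields of a fixed signature with $|d_K|<X$, the mean of $h_3$ over real quadratic fields comes out as $1+2\cdot\frac{1/(12\zeta(3))}{1/(2\zeta(2))}=1+\frac{1}{3}\cdot\frac{\zeta(2)}{\zeta(3)}$, and over imaginary quadratic fields as $1+2\cdot\frac{1/(4\zeta(3))}{1/(2\zeta(2))}=1+\frac{\zeta(2)}{\zeta(3)}$.

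To pass to \emph{all} orders I would use Bhargava's parametrization (\cite{Bhargava2004}, Theorem~13 above) of $\operatorname{SL}_2(\Z)$-orbits of integer-matrix binary cubic forms by triples $(\OO,I,\delta)$. Fixing an oriented quadratic order $\OO$, the constraints $I^3\subseteq\delta\OO$ and $N(I)^3=N(\delta)$ force $\delta$ to generate $I^3$, uniquely up to $\OO^\times$, and the residual rescaling $I\mapsto uI$ by a unit $u\in\OO^\times$ multiplies $\delta$ by $u^3$; hence the triples over $\OO$ encode precisely $[I]\in\Cl(\OO)[3]$, equivalently (via Delone--Faddeev) the cubic orders $R$ whose quadratic resolvent ring is $\OO$, with $R$ totally real iff $\OO$ is real. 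Accounting for orientations and the index of $\operatorname{SL}_2$ in $\operatorname{GL}_2$, this again gives $h_3(\OO)=1+2\,m(\OO)$, where $m(\OO)$ counts the cubic orders $R$ with resolvent ring $\OO$ satisfying a certain family $(\ast)$ of local conditions --- one at each prime --- recording the compatibility of $R$ with its resolvent. Counting these cubic orders of bounded discriminant by Bhargava's geometry-of-numbers method on $V_{\Z}$ (the extension of Davenport--Heilbronn to non-maximal cubic rings), the totally real ones with $0<\disc R<X$ number $\sim c^+X$ and the complex ones with $-X<\disc R<0$ number $\sim c^-X$, where $c^\pm$ is the archimedean density --- still with $c^-/c^+=3$, as for fields --- times the non-archimedean densities imposed by $(\ast)$. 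Bhargava--Varma's local computation shows that, relative to the count $\sim\frac{X}{2}$ of quadratic orders of a fixed signature with $|\disc\OO|<X$, the conductor sieves on the cubic and the quadratic side contribute matching factors (each equal to $\zeta(2)$) that cancel in the ratio; so the mean over all real quadratic orders equals the mean over the maximal ones, namely $1+\frac{1}{3}\cdot\frac{\zeta(2)}{\zeta(3)}$, and likewise it equals $1+\frac{\zeta(2)}{\zeta(3)}$ in the imaginary case.

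The hard part is this last step in the non-maximal range. One must pin down the local conditions $(\ast)$ exactly, verify that they form an \emph{acceptable} family so that Bhargava's averaging produces a true main term with $o(X)$ --- ideally power-saving --- error \emph{uniformly} in those conditions, and --- the genuinely delicate point --- prove the uniformity estimate bounding the contribution of cubic orders of large index, so that the conductor sieve converges and may be interchanged with the limit in $X$. That tail analysis is exactly what carries the argument past the Davenport--Heilbronn count of fields; the quadratic-order count, the archimedean ratio $3$, and the final manipulation of $\zeta(2)$ and $\zeta(3)$ are all routine by comparison.
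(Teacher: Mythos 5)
This statement is quoted verbatim from Bhargava--Varma (\cite{BhargavaVarma2016}, Theorem 2); the paper cites it as background and gives no proof, so there is no internal argument to compare against and I can only assess your sketch on its own terms. Your overall architecture --- class field theory plus Davenport--Heilbronn for the maximal case, then Bhargava's parametrization, a conductor sieve, and a tail estimate for non-maximal orders --- is indeed the strategy of the original paper, and you are right that the uniformity estimate for cubic orders of large index is the genuinely hard analytic point.

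However, the first step contains a concrete error that propagates. The identity $h_3(K)=1+2\,\#\{L:\disc L=d_K\}$ is correct, but when you sum it over fundamental discriminants $|d_K|<X$ you may only count cubic fields whose discriminant is \emph{fundamental}. The Davenport--Heilbronn constants $\tfrac{1}{12\zeta(3)}$ and $\tfrac{1}{4\zeta(3)}$ count \emph{all} cubic fields, including those totally ramified at some prime; such a field has discriminant $d_K f^2$ with $f>1$ and corresponds to an index-$3$ subgroup of a ray class group of $K$, not of $\Cl(K)$. The correct averages over maximal orders (i.e.\ over quadratic fields) are $2$ and $4/3$ --- the classical Davenport--Heilbronn result, restated as Theorem 1 of \cite{BhargavaVarma2016} --- not $1+\zeta(2)/\zeta(3)$ and $1+\tfrac{1}{3}\zeta(2)/\zeta(3)$. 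Consequently your closing claim that ``the mean over all real quadratic orders equals the mean over the maximal ones'' is also false: the whole point of Theorem 2 as opposed to Theorem 1 is that admitting non-maximal orders changes the average (from $2$ to $1+\zeta(2)/\zeta(3)\approx 2.368$ in the imaginary case), so the local factors on the cubic and quadratic sides cannot simply cancel. Your two errors happen to compensate and land on the correct numbers, but the argument as written does not establish them; the Euler products arising from the conductor sieve on both sides must be computed explicitly, and that computation is precisely where the factor $\zeta(2)/\zeta(3)$ comes from.
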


\subsubsection{Generalizations and Current Limitations}

Melanie Wood's Ph.D. thesis \cite{Wood2011} generalized composition laws to arbitrary base rings, establishing a relation between these laws and class groups of fields above quadratic fields, rather than directly with the class group of a quadratic field \cite[Theorem 1.4]{Wood2011}.

Existing methods for studying higher torsion, such as those for the 5-part, often rely on computationally complex generalizations. For instance, Bhargava's approach for the 5-part involves the analysis of sextic polynomials \cite[Section 2]{Bhargava2004}. Our method provides a more systematic alternative through the introduction of:

\paragraph{$(p,n)$-Higher Composition Laws.} This framework formalizes the relationship between ideal classes of order $p^n$ in $\Cl(N)$ and combinations of polynomial structures of degree $p^n$. This provides a unified extension of classical composition laws to higher exponents.

\subsection{Our Contribution: Generalized Composition Laws for Arbitrary Primes}
We introduce a generalized framework for composition laws that applies to every ideal class of order $p^n$ in the class group of a real quadratic field, for any prime $p$ and integer $n$. This framework has the potential to extend the classical results of Gauss on the $2$-part and the modern breakthroughs of Bhargava on the $3$-part of the class group of real quadratic fields.

A key distinction lies in the scope of the correspondence: whereas Gauss and Bhargava establish bijections for the entire $2$-part or $3$-part, our method guarantees, for every odd prime $p$ and integer $n$, a correspondence between ideals of order $p^n$ in the class group of real quadratic fields and degree $p^n$ polynomials.

The rationale for our approach is provided by the following technical theorem, which connects ideals of order $p^n$ in the class group to the properties of certain cyclic extensions.

\begin{theorem}[Main technical theorem]\label{thm:main-technical}
Let $N$ be a real quadratic field, let $p$ be an odd prime, and let $[I]\in\Cl(N)$ be the ideal class of an ideal $I$ dividing $p^n$.  
Then the order of $[I]$ in $\Cl(N)$ is equal to the unit norm index
\[
\bigl[\mathcal{O}_N^{\times} : \Norm_{M/N}(\mathcal{O}_M^{\times})\bigr],
\]
where $M/N$ ranges over cyclic extensions of degree $p^k$ with $k\ge n$, satisfying the following conditions:
\begin{itemize}
    \item each prime $\mathfrak{p}_i \mid I$ of $N$ is inert in $M/N$;
    \item the absolute norm $\Norm_{N/\mathbb{Q}}(\delta_{M/N})$ of the relative discriminant $\delta_{M/N}$ is inert in $N$;
    \item $M/N$ is the unique subextension of degree $p^n$ of a cyclic extension $L/N$ of degree $p^k$ such that
    \[
    [L:M] \mid [M:N].
    \]
\end{itemize}
\end{theorem}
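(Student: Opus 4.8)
The plan is to recast the claimed identity cohomologically and then run it through \emph{Chevalley's ambiguous class number formula}. For $M/N$ cyclic with $G=\Gal(M/N)$, the index $[\mathcal O_N^{\times}:\Norm_{M/N}(\mathcal O_M^{\times})]$ is precisely the Tate group $\widehat H^{0}(G,\mathcal O_M^{\times})$, and writing $E_N=\mathcal O_N^{\times}$ and $d=\operatorname{ord}_{\Cl(N)}[I]$, Chevalley's formula reads
\[
\bigl|\Cl(M)^{G}\bigr|=h(N)\cdot\frac{\prod_{v}e_{v}}{[M:N]\cdot[E_N:E_N\cap\Norm_{M/N}(M^{\times})]},
\]
the product taken over all places $v$ of $N$ (and, since $p$ is odd and $N$ is totally real, the archimedean places stay unramified, so only finite primes contribute). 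So the theorem reduces to three tasks: (i) show the hypotheses force $E_N\cap\Norm_{M/N}(M^{\times})=\Norm_{M/N}(\mathcal O_M^{\times})$; (ii) evaluate $\prod_v e_v$; and (iii) identify $\bigl|\Cl(M)^{G}\bigr|$ with $h(N)/d$.

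For (ii): the condition that $\Norm_{N/\Q}(\delta_{M/N})$ be inert in $N$ is designed to force $M/N$ to ramify at a single prime $\mathfrak q$ of $N$ (lying over a rational prime inert in $N$) and, in the principal case, totally; since the primes $\mathfrak p_i\mid I$ are inert — hence unramified — this gives $\prod_v e_v=[M:N]$, and Chevalley collapses to $\bigl|\Cl(M)^{G}\bigr|=h(N)/[E_N:E_N\cap\Norm_{M/N}(M^{\times})]$. For (i): inertness of each $\mathfrak p_i$ makes every local unit at $\mathfrak p_i$ a local norm, and because $\mathfrak q$ is the only ramified prime the obstruction to a global unit being a global norm is, by Hasse's norm theorem for cyclic extensions, a single local condition at $\mathfrak q$; adjusting a global-norm witness of a power $\varepsilon^{j}$ of the fundamental unit by a principal ideal supported away from $\mathfrak q$ then upgrades the witness to a unit, giving the desired equality of norm groups.

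The crux is (iii), and here I would argue with Artin reciprocity over $N$. Let $H$ be the Hilbert class field of $N$; the Frobenius of $\mathfrak p_i$ in $H/N$ is $[\mathfrak p_i]$, so $\langle[I]\rangle$ is the subgroup of $\Cl(N)=\Gal(H/N)$ generated by the $[\mathfrak p_i]$. The ambiguous class group $\Cl(M)^{G}$ can be described, via the exact sequences $1\to E_M\to M^{\times}\to P_M\to 1$ and $1\to P_M\to I_M\to\Cl(M)\to 1$ together with the norm map $\Cl(M)\xrightarrow{1-\sigma}\Cl(M)$, as $\Cl(N)$ modulo the subgroup of classes that capitulate in $M/N$ (equivalently, the classes absorbed into the image of $1-\sigma$ by inertia at $\mathfrak q$ and by $M\cap H$). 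The inertness of the $\mathfrak p_i$ in $M/N$ forces $[\mathfrak p_i]$, hence all of $\langle[I]\rangle$, to lie in this capitulating subgroup, while the tower hypothesis — that $M$ is the degree-$p^{n}$ layer of a cyclic $L/N$ with $[L:M]\mid[M:N]$ — is exactly what makes the capitulating subgroup no larger than $\langle[I]\rangle$, by matching the $p$-adic length of the tower with the exponent of the norm obstruction. Thus $\bigl|\Cl(M)^{G}\bigr|=h(N)/d$, and combined with (i) and (ii) we conclude $[E_N:\Norm_{M/N}(\mathcal O_M^{\times})]=d$.

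I expect step (iii) to be the main obstacle, for two reasons. First, pinning the capitulating/ambiguous subgroup down to be \emph{exactly} $\langle[I]\rangle$ — neither smaller (no under-capitulation) nor larger (no extra classes becoming ambiguous because of $\mathfrak q$ or $M\cap H$) — requires a careful genus-theoretic diagram chase rather than a black-box appeal to Chevalley, and this is precisely where all three hypotheses must be used in concert. Second, one should check the hypotheses are not vacuous: the existence of an $M$ that is simultaneously inert at the $\mathfrak p_i$, ramified only at one auxiliary prime $\mathfrak q$ over an inert rational prime, and embeddable in a cyclic $L/N$ of the prescribed length, is a Grunwald--Wang/Chebotarev construction — choosing $\mathfrak q$ so that its splitting data in an appropriate governing field realizes the prescribed local behaviour; since $p$ is odd there is no Grunwald--Wang obstruction, but the compatibility of the local conditions still needs verification. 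Finally I would record the consistency checks flagged in the paper: the case $n=1$ with $I=\mathfrak p\mid p$, and the degenerate unramified limit that recovers the Hilbert class field statement (Remark~\ref{Rema:Hilbert-identity}).
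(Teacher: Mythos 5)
Your framework---Chevalley's ambiguous class number formula plus a capitulation computation---is in substance the same circle of ideas the paper uses: Lemma~\ref{lem:relative-polya-exact} is a P\'olya-group refinement of Chevalley's formula, and Lemma~\ref{lem:unit-index-ratio} is exactly your Herbrand-quotient identification of the unit norm index with $\widehat H^0(\Gal(M/N),\mathcal O_M^{\times})$. The difficulty is that your step (iii), which you yourself flag as the crux, has a genuine gap at precisely the point where the paper introduces its main new tool. You assert that ``the inertness of the $\mathfrak p_i$ in $M/N$ forces $[\mathfrak p_i]$, hence all of $\langle[I]\rangle$, to lie in this capitulating subgroup.'' That is false as stated: inertness of $\mathfrak p_i$ means its Frobenius generates $\Gal(M/N)$, i.e.\ $[\mathfrak p_i]$ is \emph{nontrivial} in $I_N^{\mathfrak m}/P_N^{\mathfrak m}\Norm_{M/N}(I_M^{\mathfrak m})$; by itself this says nothing about $\mathfrak p_i\mathcal O_M$ becoming principal. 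The paper obtains capitulation from a different mechanism: a generalization of the Artin--Furtw\"angler vanishing-transfer argument (Lemma~\ref{lem:vanishing-transfer}), applied through the tower $N\subset M\subset L$ with $[L:M]\mid[M:N]$, which makes the restricted Verlagerung $\Gal(M/N)\to\Gal(L/M)$ vanish and, via Artin reciprocity on both layers (Lemma~\ref{lem:number-field-application}), yields Theorem~\ref{thm:capitulation}. Your proposal uses the tower hypothesis only as an upper bound on the capitulation kernel (``matching the $p$-adic length of the tower with the exponent of the norm obstruction''), so the capitulation statement itself is never actually proved, and no substitute for the transfer argument is offered.

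A second, independent gap is the identification $\bigl|\Cl(M)^{G}\bigr|=h(N)/d$. The natural map $\Cl(N)\to\Cl(M)^{G}$ has kernel the capitulation kernel but is in general \emph{not} surjective; the cokernel is controlled by the ramified primes and by units that are local norms everywhere without being norms of units, which is exactly the content of the four-term exact sequence in Lemma~\ref{lem:relative-polya-exact}. Writing $\Cl(M)^{G}$ as ``$\Cl(N)$ modulo the capitulating classes'' silently discards this cokernel. The paper spends Sections~5--6 (Proposition~\ref{prop:ramified-exact-sequence}, Lemma~\ref{lem:polya-ram-size-H1}, Theorem~\ref{thm:main-formula}) precisely on this point: under the hypothesis that $\Norm_{N/\Q}(\delta_{M/N})$ is inert in $N$, the ramified contribution reduces to a ratio of $H^1$'s, and the unramified contribution $\Po(M/N)_{\mathrm{un}}$ is shown to be generated by the non-capitulating classes, which is what isolates the order of $[I]$ in the final count. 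Until you (a) supply an actual proof of capitulation and (b) control the cokernel of $\Cl(N)\to\Cl(M)^{G}$ rather than assuming it trivial, the chain leading to $[\mathcal O_N^{\times}:\Norm_{M/N}(\mathcal O_M^{\times})]=d$ does not close.
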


Theorem \ref{thm:main-technical} provides the foundation for our composition laws. The connection arises because the norm map $\operatorname{Norm}_{M/N}$ from the unit group $\mathcal{O}^{\times}_M$ of the degree $p^n$ extension can be described by polynomials of degree $p^n$. Since $\operatorname{Norm}_{M/N}(\mathcal{O}^{\times}_M)$ forms a group, the product of elements represented by such polynomials can itself be represented by another polynomial of degree $p^n$. This multiplicative closure is the essence of a composition law.



\subsection{An Illustrative Example for Prime Degree}\label{sub-An-Illustrative-Example-for-Prime-Degree}
We now explain how Theorem~\ref{thm:main-technical} establishes a connection between composition laws for polynomials and the multiplication of ideal classes.  
The construction described below for ideal classes of order~$p$ generalizes naturally to ideal classes of order~$p^n$ for any integer $n \geq 1$ and any odd prime~$p$.

Let $N$ be a real quadratic field, and let $I_1$ and $I_2$ be ideals of $N$ whose ideal classes $[I_1]$ and $[I_2]$ have order~$p$, where $p$ is an odd prime, as characterized by Theorem~\ref{thm:main-technical}.  
Then there exist (non-unique) cyclic extensions $M_1/N$ and $M_2/N$ of degree~$p$ such that
\[
ord([I_i]) \;=\;
\bigl[\mathcal{O}_N^{\times} : \Norm_{M_i/N}(\mathcal{O}_{M_i}^{\times})\bigr],
\qquad i=1,2.
\]

Suppose that the product ideal $I_1 I_2$ also represents an ideal class of order~$p$.  
Then there exists a cyclic extension $M_3/N$ of degree~$p$ such that
\[
ord([I_1 I_2]) \; = \;
\bigl[\mathcal{O}_N^{\times} : \Norm_{M_3/N}(\mathcal{O}_{M_3}^{\times})\bigr].
\]

In extension $M_1/N$, the index of the norm group in the unit group is $p$:
\[
[\mathcal{O}^{\times}_N : \operatorname{Norm}_{M_1/N}(\mathcal{O}^{\times}_{M_1})] = p.
\]
Since $ \mathcal{O}^{\times}_N = \langle \varepsilon \rangle \times \{\pm 1\} $, and $p$ is odd it follows that
\[
-\varepsilon^p \in \operatorname{Norm}_{M_1/N}(\mathcal{O}^{\times}_{M_1}).
\]
This norm group corresponds to a minimal polynomial of degree $p$:
\[
x^p + a_{p-1}x^{p-1} + \dots + a_1 x + \varepsilon^p = 0.
\]
We define the following polynomial, obtained by removing the constant term from the norm equation:
\[
P(x) = x^p + a_{p-1}x^{p-1} + \dots + a_1 x.
\]

There are many such polynomials; we can choose one of them and say that it corresponds to the ideal $I_1$.

Similarly, for the ideal $I_2$ and its corresponding extension $M_2/N$, we have $-\varepsilon^p \in \operatorname{Norm}_{M_2/N}(\mathcal{O}^{\times}_{M_2}) $, yielding another polynomial:
\[
Q(x) = x^p + b_{p-1}x^{p-1} + \dots + b_1 x.
\]

Now consider the product $P(x)Q(x)$. When evaluated at elements of the norm group, this product yields the constant term $-\varepsilon^p \cdot (-\varepsilon^p) = \varepsilon^{2p}$.

For the product ideal $I_3 = I_1 I_2$ and its corresponding extension $M_3/N$, the unit $\varepsilon^p$ is contained in the norm group: $\varepsilon^p \in \operatorname{Norm}_{M_3/N}(\mathcal{O}^{\times}_{M_3})$. Consequently, $\varepsilon^{p} \in \operatorname{Norm}_{M_3/N}(\mathcal{O}^{\times}_{M_3})$ as well. This norm group also corresponds to a minimal polynomial of degree~$p$:
\[
W(x) = x^p + c_{p-1}x^{p-1} + \dots + c_1 x = -\varepsilon^{p}.
\]

Thus, both \(P(x)Q(x)\) and \(W(x)^2\), when evaluated on their respective norm groups, yield the same constant term \(\varepsilon^{2p}\). This shows that the product of the two polynomials \(P\) and \(Q\)---which correspond to the ideal classes of \(I_1\) and \(I_2\)---is represented by a third polynomial \(W\) of degree \(p\), which in turn corresponds to the product ideal class \(I_3\). In this way, the composition of polynomials mirrors the multiplication of ideal classes.

Note that this group operation differs from Gauss's original composition law. In our formulation, the product satisfies $P(x) Q(x) = W(x)^2$ instead of $P(x) Q(x) = W(x)$. Nevertheless, because these polynomials correspond to ideals, they emulate the group law of ideal classes, and we obtain a well-defined group action among these polynomials.

\begin{definition}
As explained above, for each extension of degree $p^n$ we can choose one polynomial of degree $p^n$ whose constant term equals $\varepsilon^{p^n}$, where $-\varepsilon$ is the fundamental unit of the real quadratic field $N$, and $p$ is an odd prime. We denote this family of polynomials by $\mathcal{F}$.
\end{definition}



\begin{theorem}\label{thm:main-bijection}
Let $p$ be an odd prime and let $n \geq 1$. 
There exists a natural bijection between binary forms of degree~$p^n$ in the family~$\mathcal{F}$ and ideal classes of order~$p^n$ in $\operatorname{Cl}(N)$.
\end{theorem}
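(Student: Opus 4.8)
The plan is to realise the bijection in two steps: send an ideal class $[I]$ of order $p^n$ to a cyclic $p^n$-extension $M/N$ supplied by Theorem~\ref{thm:main-technical}, and then send $M/N$ to its chosen representative $f_{M/N}\in\mathcal{F}$, which is the minimal polynomial of a normalized unit of $M$. One then checks that this map is inverse to the construction that reads an extension, and hence an ideal class, off a form in $\mathcal{F}$; the compatibility of the resulting correspondence with the quadratic composition law $P\,Q = W^{2}$ of Section~\ref{sub-An-Illustrative-Example-for-Prime-Degree} is what makes it natural.

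\textbf{The forward map.} Let $[I]\in\Cl(N)$ have order $p^n$, and let $M/N$ be the distinguished degree-$p^n$ subextension furnished by Theorem~\ref{thm:main-technical}, so that $[\mathcal{O}_N^{\times}:\Norm_{M/N}(\mathcal{O}_M^{\times})]=p^n$. Since $p^n$ is odd, $\Norm_{M/N}(-1)=(-1)^{p^n}=-1$, so the norm subgroup contains $-1$; the only index-$p^n$ subgroup of $\mathcal{O}_N^{\times}=\langle\varepsilon\rangle\times\{\pm1\}$ containing $-1$ is $\{\pm\varepsilon^{p^nk}:k\in\Z\}$, hence there is $u\in\mathcal{O}_M^{\times}$ generating $M$ over $N$ with $\Norm_{M/N}(u)=-\varepsilon^{p^n}$. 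Its minimal polynomial over $N$ has degree $p^n$ and constant term $(-1)^{p^n}\Norm_{M/N}(u)=\varepsilon^{p^n}$ and defines the extension $M/N$; we set $\Phi([I])=f_{M/N}$, the chosen representative of $M/N$ in $\mathcal{F}$.

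\textbf{The backward map.} Given $f\in\mathcal{F}$ of degree $p^n$ with constant term $\varepsilon^{p^n}$, put $M_f=N[x]/(f)$. Using that $f$ is, by definition of $\mathcal{F}$, the minimal polynomial of a unit of relative norm $-\varepsilon^{p^n}$, one verifies that $M_f/N$ is cyclic of degree $p^n$, satisfies the three conditions of Theorem~\ref{thm:main-technical}, and has norm index $p^n$. Reading Theorem~\ref{thm:main-technical} in reverse, the requirements that each $\mathfrak{p}_i\mid I$ be inert in $M_f/N$, that $\Norm_{N/\Q}(\delta_{M_f/N})$ be inert in $N$, and that $M_f/N$ be the prescribed subextension together determine a unique ideal class $[I]$ of order $p^n$; put $\Psi(f)=[I]$.

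\textbf{Inverseness, naturality, and the main obstacle.} One gets $\Psi\circ\Phi=\mathrm{id}$ because the minimal polynomial of a primitive element generates exactly $M$ over $N$, and $\Phi\circ\Psi=\mathrm{id}$ by the choice of one representative per extension in $\mathcal{F}$. For naturality, one verifies that $\Phi([I_1][I_2])$ is the form $W$ with $P\,Q = W^{2}$ for $P=\Phi([I_1])$ and $Q=\Phi([I_2])$: this is the norm-group computation of Section~\ref{sub-An-Illustrative-Example-for-Prime-Degree} carried out for degree $p^n$ — the product of two norm equations with constant term $-\varepsilon^{p^n}$ has constant term $\varepsilon^{2p^n}$, which is matched by the square of the norm equation attached to $I_1I_2$. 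I expect the principal difficulty to be the well-definedness of $\Phi$: Theorem~\ref{thm:main-technical} admits several extensions $M/N$ for a given class $[I]$, and within each $M$ several generators of norm $-\varepsilon^{p^n}$, so one must show all such choices land on the same element of $\mathcal{F}$ — equivalently, that the three conditions of Theorem~\ref{thm:main-technical} pin down both the passage from $[I]$ to $M/N$ and its reverse. Establishing these rigidity statements, together with the claim that distinct classes yield non-equivalent forms, is where the substance of the argument lies; the group-law compatibility then follows formally from the norm-group bookkeeping already indicated.
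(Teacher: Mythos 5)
Your overall architecture --- ideal class $\to$ relatively proper extension $M/N$ supplied by Theorem~\ref{thm:main-technical}, then $M/N \to$ its chosen representative in $\mathcal{F}$, with naturality coming from the $P\,Q=W^{2}$ bookkeeping of Section~\ref{sub-An-Illustrative-Example-for-Prime-Degree} --- is the same as the paper's. But the step you defer is exactly the step the argument turns on, and as you have written it the backward map is not well-defined. You propose to recover $[I]$ from $M_f$ by ``reading Theorem~\ref{thm:main-technical} in reverse,'' i.e.\ from the requirement that the primes dividing $I$ be inert in $M_f/N$ together with the discriminant and tower conditions. Inertness cannot single out an ideal class: by Chebotarev a positive density of primes of $N$, distributed over many different classes, are inert in a fixed cyclic extension $M_f/N$, and the remaining two conditions constrain only $M_f$, not $I$. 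The paper's actual selection mechanism is capitulation, not inertness: Theorem~\ref{thm:capitulation} shows that a prime representing a class of order $p^n$ capitulates in any relatively proper extension of degree $p^n$, and Theorem~\ref{thm:main-formula}(i) shows that in a relatively $\mathfrak{q}_1$-proper extension \emph{only} $[\mathfrak{q}_1]$ capitulates, every other order-$p^n$ class having image of full order in $\operatorname{Po}(M/N)_{\mathrm{un}}$. That pair of statements is what makes ``the unique order-$p^n$ class that becomes principal in $M$'' a well-defined inverse; it is precisely the content you label as ``where the substance of the argument lies'' without supplying it.

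Two smaller points. First, a unit with $\operatorname{Norm}_{M/N}(u)=-\varepsilon^{p^n}$ exists trivially (take $u=-\varepsilon\in\mathcal{O}_N^{\times}$, since $p^n$ is odd), so the nontrivial part of your forward map is producing such a $u$ that actually generates $M$ over $N$ so that its minimal polynomial has degree $p^n$; you assert this without argument. Second, the multiplicity you flag --- several admissible $M$ for one class $[I]$, each contributing its own element of $\mathcal{F}$ --- is real and is not resolved by your proposal; indeed the remark following Theorem~\ref{thm:main-bijection} advertises this non-uniqueness as a feature, so a complete proof must either fix one proper extension per class or pass to a suitable identification on $\mathcal{F}$ before the word ``bijection'' is earned. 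Supplying the capitulation input and resolving this normalization are what separate your outline from a proof.
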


Equivalently, the correspondence can be written as:
\[
\left\{\,
\begin{array}{c}
\text{binary forms of degree } p^n \\
\text{in the family } \mathcal{F}
\end{array}
\right\}
\quad\longleftrightarrow\quad
\left\{\,
\begin{array}{c}
\text{ideal classes } [I] \in \operatorname{Cl}(N) \\
\text{of order exactly } p^n
\end{array}
\right\}.
\]
This bijection provides a direct bridge between the geometry of forms and the arithmetic of ideal classes. Consequently, it not only generalizes the classical composition laws introduced by Gauss but also opens new avenues for computing and understanding higher torsion in class groups.


\begin{remark}
While our method relies on norm computations involving the fundamental unit of \( N \)---making it appear less direct than the universal composition laws of Gauss and Bhargava, which are governed by polynomials of degrees 2 and 3---it offers greater flexibility. 

Theorem \ref{thm:main-bijection} allows us to choose, for ideals of order~$p$ in $\operatorname{Cl}(N)$, extensions $M/N$ of degree~$p$ from among various possibilities $M_i$ (not just a single fixed extension). Consequently, we can select an $M_i$ for which the corresponding composition law is particularly simple to compute. Moreover, the same $M_i$ often works for many different quadratic fields $N$.

This flexibility creates the possibility of performing uniform and efficient computations across families of real quadratic fields.
\end{remark}

\subsection{Hilbert Class Field Theory and a New Practical Approach}

The second major approach to studying class groups is Hilbert class field theory. In this section, we recall its classical statement and then present a novel, more practical method for investigating ideals of order $p^n$ in the class group of a real quadratic field $N$.

While the classical Hilbert class field $H$ of a number field $N$ is the maximal unramified abelian extension, and its degree over $N$ equals the class number $h(N)$, its explicit construction is often computationally challenging.

We propose an alternative method that, for practical purposes, utilizes \emph{ramified} extensions $M/N$. This approach offers significant advantages:
\begin{itemize}
\item The discriminant conditions required (e.g., inert primes or specific factorization of minimal polynomials) are often simpler to verify computationally.
\item The behavior of the norm map $\Norm_{M/N}$ becomes more tractable and directly informative in these settings.
\end{itemize}

The following Corollary provides the simple usage for our method to detect ideal classes of order $p$ in $\Cl(N)$ in Theorem \ref{thm:main-bijection}, which for each odd $p$ can be explained as follows.

\begin{corollary}\label{cor:p-divisibility}
Let $N$ be a real quadratic field and let $p$ be an odd prime.  
Then the following statements are equivalent:
\begin{enumerate}[label=(\alph*)]
    \item The class number $|\Cl(N)|$ is divisible by $p$.
    \item For some cyclic extensions $M/N$ of degree $p$ such that the norm map on units
    \[
    \Norm_{M/N}\colon \mathcal{O}_M^{\times} \longrightarrow \mathcal{O}_N^{\times}
    \]
    is not surjective.
\end{enumerate}

Where the cyclic extension $M/N$ in~(b) is chosen to satisfy the following properties:
\begin{itemize}
    \item the absolute norm $\Norm_{N/\mathbb{Q}}(\mathfrak{d}(M/N))$ of the relative discriminant $\mathfrak{d}(M/N)$ is inert in $N$;
    \item there exists a cyclic extension $L/N$ of degree $p^k$ for some $k>1$ such that
    \[
    N \subset M \subset L;
    \]
    \item an ideal class $\mathfrak{c}\in \Cl(N)$ of order $p$ is represented by a prime ideal that is inert in the extension $M/N$.
\end{itemize}
\end{corollary}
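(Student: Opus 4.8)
The plan is to regard this corollary as the $n=1$ case of Theorem~\ref{thm:main-technical}, combined with Theorem~\ref{thm:main-bijection}, and to prove the two implications separately.

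For the implication (a) $\Rightarrow$ (b): starting from $p \mid |\Cl(N)|$, I would use Cauchy's theorem to produce a class $\mathfrak{c}\in\Cl(N)$ of order $p$, and the Chebotarev density theorem to represent $\mathfrak{c}$ by a prime ideal $\mathfrak{q}$, chosen (if necessary) in a prescribed ray class so that the discriminant and splitting conditions below can be imposed independently. I would then invoke Theorem~\ref{thm:main-technical} with $[I]=\mathfrak{c}$: it supplies a cyclic extension $M/N$ of degree $p$, sitting inside a cyclic $p^{k}$-extension $L/N$ with $[L:M]\mid[M:N]$, in which $\mathfrak{q}$ is inert, for which $\Norm_{N/\mathbb{Q}}(\mathfrak{d}(M/N))$ is inert in $N$, and for which $ord(\mathfrak{c})=[\mathcal{O}_N^{\times}:\Norm_{M/N}(\mathcal{O}_M^{\times})]$. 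Since $ord(\mathfrak{c})=p>1$, the unit norm map is not surjective, and this $M/N$ meets all three of the listed properties, so (b) holds.

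For the implication (b) $\Rightarrow$ (a): I would first record the elementary fact that for a cyclic extension $M/N$ of degree $p$ the unit norm index is $1$ or $p$. Indeed $\Norm_{M/N}(u)=u^{p}$ for every $u\in\mathcal{O}_N^{\times}\subseteq\mathcal{O}_M^{\times}$, so $(\mathcal{O}_N^{\times})^{p}\subseteq\Norm_{M/N}(\mathcal{O}_M^{\times})\subseteq\mathcal{O}_N^{\times}$; and since $p$ is odd and $\mathcal{O}_N^{\times}=\{\pm1\}\times\langle\varepsilon\rangle$, the quotient $\mathcal{O}_N^{\times}/(\mathcal{O}_N^{\times})^{p}$ is cyclic of order $p$. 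Hence non-surjectivity forces the index to equal $p$. Finally, the third listed property provides a prime ideal inert in $M/N$ whose class $\mathfrak{c}$ has order $p$, so $\Cl(N)$ has an element of order $p$ and $p\mid|\Cl(N)|$; equivalently, applying Theorem~\ref{thm:main-technical} to $\mathfrak{c}$ and this $M/N$ yields $ord(\mathfrak{c})=p$ directly.

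The main obstacle is the construction inside (a) $\Rightarrow$ (b): one must simultaneously arrange a degree-$p$ cyclic extension $M/N$ that embeds in a cyclic $p^{k}$-tower with the divisibility $[L:M]\mid[M:N]$, has absolute discriminant norm inert in $N$, and makes the chosen order-$p$ prime inert — and then check that the hypotheses of Theorem~\ref{thm:main-technical} are genuinely met, so that the norm index computes $ord(\mathfrak{c})$. I expect this to reduce to placing the ramification and splitting conditions at disjoint finite sets of primes and invoking a Grunwald--Wang / Chebotarev existence argument; since $p$ is odd, the exceptional case of Grunwald--Wang does not intervene, so the construction should go through, with the coexistence of the tower condition and the inertness conditions being the delicate point.
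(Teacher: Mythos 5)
Your proposal follows essentially the same route as the paper: both directions are deduced from Theorem~\ref{thm:main-technical}, with (a) $\Rightarrow$ (b) reading off non-surjectivity of the unit norm map from $\operatorname{ord}(\mathfrak{c}) = [\mathcal{O}_N^{\times} : \Norm_{M/N}(\mathcal{O}_M^{\times})] = p$, and (b) $\Rightarrow$ (a) running the same identity in reverse. You are in fact more explicit than the paper about the one genuinely delicate point --- the existence of a degree-$p$ extension $M/N$ satisfying all three listed conditions simultaneously, which the paper simply asserts by writing ``let $M/N$ be the proper extension of degree $p$ for the ideal $\mathfrak{I}$'' --- though, like the paper, you defer rather than carry out that construction.
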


The practical advantage of this method over direct computation in the Hilbert class field is demonstrated in the following example.

\begin{example}\label{ex:Qsqrt79}
Consider the field $N = \mathbb{Q}(\sqrt{79})$, which has class number $3$. Its Hilbert class field $H$ is a compositum of a non-Galois cubic extension, defined, for instance, by the polynomial $x^3 - x^2 - 4x + 2$, with $N$. Constructing and working with $H$ directly is complex.

Instead, we apply our method. Let $K/\mathbb{Q}$ be the cyclic cubic extension defined by $x^3 - 10x^2 + 21x - 11$, whose discriminant is inert in $N$. The compositum $M = KN$ forms a cyclic degree-$6$ extension of $\mathbb{Q}$, ramified at $37$. A key computation (see the program in Appendix~A) shows that the fundamental unit of $\mathbb{Q}(\sqrt{79})$ is not a norm from $M$. By Corollary~\ref{cor:p-divisibility}, this non-surjectivity of the norm map on units directly demonstrates that $3 \mid h(\mathbb{Q}(\sqrt{79}))$, offering a more computationally accessible path to the same conclusion.
\end{example}

\subsection{The Analytic Class Number Formula and a Deterministic Alternative}

A third classical approach to the study of class groups is provided by the analytic class number formula. This perspective is especially effective when the real quadratic field $N$ is realized as a subfield of a cyclotomic field. In this setting, the analytic class number formula yields conditional upper bounds on the size of $Cl(N)$.

However, results in analytic number theory often rely on heuristics, the Generalized Riemann Hypothesis (GRH), or smoothness assumptions to provide probabilistic or conditional guarantees. For example, a common line of inquiry uses the norm index to establish an upper bound for the class number of real quadratic fields \cite[Chapters 4 \& 8]{Washington2012}. While powerful, these results are inherently conditional and do not precisely determine the group structure.

Our work provides a distinct and complementary perspective. Whereas the analytic approach often yields conditional bounds, our method describes the $p^n$-order ideal in the class group of real quadratic fields \emph{accurately}—yielding its precise structure, not merely an upper bound. Furthermore, it achieves this unconditionally, without relying on the truth of the Riemann Hypothesis or other unproven conjectures.

\subsection{Strategy: From Pólya Groups to a Simplified Class Number Formula}

We begin by recalling the definition of the relative Pólya group associated to an extension $M/N$ and its relationship with the class group $\Cl(N)$.  
This relationship is encoded in the following exact sequence (see Lemma~\ref{lem:relative-polya-exact}):
\begin{multline*}\label{eq:main-exact-sequence} 1 \longrightarrow \operatorname{Ker}(\epsilon_{M/N}) \longrightarrow H^1(\Gal(M/N), \UU_M) \\ \longrightarrow \bigoplus_{\mathfrak{P}} \mathbb{Z}/e_{\mathfrak{P}}\mathbb{Z} \longrightarrow \frac{\operatorname{Po}(M/N)}{\epsilon_{M/N}(\Cl(N))} \longrightarrow 1. \end{multline*}
Here $\epsilon_{F/K}\colon \Cl(K)\to\Cl(F)$ denotes the transfer of ideal classes induced by the extension-of-ideals map
\[
j_{F/K}\colon I(K)\longrightarrow I(F), \qquad
\mathfrak{a}\longmapsto \mathfrak{a}\mathcal{O}_F .
\]

Our objective is to simplify the general formula relating the class number of $N$ to these invariants:
\begin{equation}\label{eq:general-formula}
h(N)\cdot \prod_i e_i(M/N)
=
\#\Po_{\mathrm{ram}}(M/N)\cdot
\#\Po_{\mathrm{un}}(M/N)\cdot
\# H^1\!\left(\Gal(M/N), \UU_M\right).
\end{equation}

Here \[
\operatorname{Po}(M/N) \cong \operatorname{Po}(M/N)_{\text{ram}} \oplus \operatorname{Po}(M/N)_{\text{un}}.
\]

Our strategy to achieve this simplification proceeds in three steps:

First, we establish several capitulation lemmas for the extension $M/N$ under specific ramification conditions. These results will allow us to control and ultimately eliminate the unramified part $\operatorname{Po}(M/N)_{\text{un}}$ from the formula.

Second, by working with a suitable quotient of the term $\#\operatorname{Po}_{\text{ram}}(M/N)$ and imposing the condition that $\operatorname{Norm}_{M/N}(\delta(M/N))$ is inert in $\mathcal{O}_N$, we can substitute the ramified part $\operatorname{Po}(M/N)_{\text{ram}}$ with a first cohomology group.

Finally, these maneuvers lead to a direct and more computable relation between the class number $h(N)$ and the sizes of first cohomology groups, which are more amenable to analysis.

\begin{remark}\label{Rema:Hilbert-identity}
Hilbert class field theory provides the trivial case of the above identity. Since $M/N$ is unramified by the Principal Ideal Theorem, and we generalized this in Section \ref{Sec:Gn-Van-Tr}, the term $\Po(M/N)_{\text{un}}$ will be eliminated. Moreover, since $M/N$ is unramified, both terms $\prod e_i(M/N)$ and $\#\Po(M/N)_{\text{ram}}$ will be 1.
\end{remark}

\section*{Notation}

For a number field $K$, let $\OO_K$ denote its ring of integers, $\Cl(K)$ its class group, and $ \OO_K^\times$ its unit group. For a finite extension $L/K$ of number fields, let $\Norm_{L/K}$ denote the norm map.

Let $\mathfrak{m}$ be a \emph{modulus} (formal product of finite primes and real places of $K$), and let $I_K^{\mathfrak{m}}$ denote the group of fractional ideals coprime to the finite part of $\mathfrak{m}$. Let $P_K^{\mathfrak{m}}$ be the subgroup of principal ideals generated by elements $\alpha \in K$ such that for finite $\mathfrak{p}^\varepsilon \mid \mathfrak{m}$, $\alpha \equiv 1 \pmod{\mathfrak{p}^\varepsilon}$; and for every real place $\tau$ in $\mathfrak{m}$, $\tau(\alpha) > 0$. The \emph{ray class group modulo $\mathfrak{m}$} is then defined as:
\[
\Cl^{\mathfrak{m}}(K) = I_K^{\mathfrak{m}} / P_K^{\mathfrak{m}}.
\]

For any prime $p$, let $G[p]$ and $G[p^n]$ denote the $p$-torsion and $p^n$-torsion subgroups of $G$, respectively. For a positive integer $m$, we write $|G|_m$ to denote the $m$-part of $|G|$.

\section*{Acknowledgments}

The author gratefully thanks Taiwang Deng, Amir Jafari, Yongsuk Moon, Franz Lemmermeyer, Ali Partofard, Ali Rajaei, Kenneth Ribet, Ehsan Shahoseini, and Koji Shimizu for insightful discussions and valuable suggestions. This work was supported by a grant from the Beijing Institute of Mathematical Sciences and Applications, and by the National Natural Science Foundation of China (NSFC).

\section{Background}

\subsection{Relative Polya Group for Relative Extension $F/K$}

\begin{definition}[\cite{Maarefparvar2020}, Definition 2.3]
Let $F/K$ be a finite extension of number fields. The \emph{relative Polya group} of $F$ over $K$ is the subgroup of $\Cl(F)$ generated by the classes of the ideals $\Pi_{\mathfrak{P}^f}(F/K)$, where $\mathfrak{P}$ is a prime ideal of $K$, $f$ is a positive integer, and $\Pi_{\mathfrak{P}^f}(F/K)$ is defined as follows:
\[
\Pi_{\mathfrak{P}^f}(F/K) = \prod_{\substack{\mathfrak{M} \in \operatorname{Max}(\OO_F) \\ \Norm_{F/K}(\mathfrak{M}) = \mathfrak{P}^f}} \mathfrak{M}.
\]
We denote the relative Polya group of $F$ over $K$ by $\Po(F/K)$. In particular, $\Po(F/\mathbb{Q}) = \Po(F)$ and $\Po(K/K) = \Cl(K)$.
\end{definition}
We further define:
\begin{itemize}
\item $\Po(M/N)_{\text{ram}}$: the subgroup of $\Po(M/N)$ generated by classes of ramified prime ideals.
\item $\Po(M/N)_{\text{un}}$: the subgroup of $\Po(M/N)$ generated by classes of unramified prime ideals.
\end{itemize}

This yields the decomposition:
\[
\Po(M/N) \cong \Po(M/N)_{\text{ram}} \oplus \Po(M/N)_{\text{un}}.
\]

Note that when $N = \mathbb{Q}$, we have $\Po(M/\mathbb{Q})_{\text{un}} = \{1\}$, so $\Po(M/\mathbb{Q}) = \Po(M/\mathbb{Q})_{\text{ram}}$.
We have the following lemmas:

\begin{lemma}\label{lem:relative-polya-exact}
Let $F/K$ be a finite Galois extension of number fields with Galois group $G$. Then the following sequence is exact:
\begin{multline}\label{eq:relative-polya-exact}
1 \longrightarrow \operatorname{Ker}(\epsilon_{M/N}) \longrightarrow H^1(\Gal(M/N), \UU_M) \\
\longrightarrow \bigoplus_{\mathfrak{P}} \mathbb{Z}/e_{\mathfrak{P}}\mathbb{Z} \longrightarrow \frac{\operatorname{Po}(M/N)}{\epsilon_{M/N}(\Cl(N))} \longrightarrow 1.
\end{multline}
For a finite extension $F/K$ of number fields, $\epsilon_{F/K}: \Cl(K) \to \Cl(F)$ denotes the transfer of ideal classes induced by the morphism $j_{F/K}: \mathfrak{a} \in I(K) \mapsto \mathfrak{a}\OO_F \in I(F)$.
\end{lemma}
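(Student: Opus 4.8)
The plan is to derive the sequence from the snake lemma applied to a commutative diagram of $\Gal(M/N)$-modules, in the spirit of Zantema's classical exact sequence for the absolute Pólya group. Write $G=\Gal(M/N)$; let $I_N,I_M$ be the groups of fractional ideals and $P_N,P_M$ the subgroups of principal fractional ideals, so that $I_N/P_N=\Cl(N)$ and $I_M/P_M=\Cl(M)$. For a prime $\mathfrak{P}$ of $N$ write $e_{\mathfrak{P}},f_{\mathfrak{P}}$ for the common ramification index and residue degree of the primes above it, and set $\Pi_{\mathfrak{P}}\coloneqq\prod_{\mathfrak{M}\mid\mathfrak{P}}\mathfrak{M}$, which is the ideal $\Pi_{\mathfrak{P}^{f_{\mathfrak{P}}}}(M/N)$ of the definition. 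The extension-of-ideals map $j_{M/N}\colon I_N\to I_M$ takes values in $I_M^G$ and carries $P_N$ into $P_M^G$, inducing $\epsilon_{M/N}$ on class groups; this gives a commutative ladder with exact rows
\[
\begin{array}{ccccccccc}
1 & \longrightarrow & P_N & \longrightarrow & I_N & \longrightarrow & \Cl(N) & \longrightarrow & 1\\
  &   & \downarrow &   & \downarrow\, j_{M/N} &   & \downarrow\, \epsilon_{M/N} &   & \\
1 & \longrightarrow & P_M^G & \longrightarrow & I_M^G & \longrightarrow & \Po(M/N) & \longrightarrow & 1 .
\end{array}
\]

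The proof then reduces to three verifications. First, exactness of the bottom row: since $M/N$ is Galois, $G$ acts transitively on $\{\mathfrak{M}\mid\mathfrak{P}\}$ and these primes share the residue degree $f_{\mathfrak{P}}$, so a $G$-invariant fractional ideal of $M$ has equal exponents along each $G$-orbit; hence $I_M^G$ is free abelian on the ideals $\Pi_{\mathfrak{P}}$. By the very definition of $\Po(M/N)$, the image of $I_M^G$ in $\Cl(M)$ is precisely $\Po(M/N)$, and the kernel of $I_M^G\to\Cl(M)$ is $I_M^G\cap P_M=P_M^G$; this is the bottom row. The two left vertical maps are injective and the squares commute by construction. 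Second, $\operatorname{coker}(j_{M/N}\colon I_N\to I_M^G)\cong\bigoplus_{\mathfrak{P}}\mathbb{Z}/e_{\mathfrak{P}}\mathbb{Z}$: indeed $\mathfrak{P}\OO_M=\Pi_{\mathfrak{P}}^{e_{\mathfrak{P}}}$, so in the bases $\{\mathfrak{P}\}$ of $I_N$ and $\{\Pi_{\mathfrak{P}}\}$ of $I_M^G$ the map $j_{M/N}$ is coordinatewise multiplication by $e_{\mathfrak{P}}$, and the cokernel is as stated (supported at the ramified primes). Third, $\operatorname{coker}(P_N\to P_M^G)\cong H^1(G,\UU_M)$: feed the short exact sequence of $G$-modules $1\to\UU_M\to M^\times\to P_M\to 1$ into group cohomology; since $(M^\times)^G=N^\times$ and $H^1(G,M^\times)=1$ by Hilbert's Theorem~90, one obtains $P_M^G/\operatorname{Im}(N^\times)\cong H^1(G,\UU_M)$, and $\operatorname{Im}(N^\times\to P_M)$ is exactly the group of principal $M$-ideals extended from principal $N$-ideals, namely $j_{M/N}(P_N)$.

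Granting these, the snake lemma applied to the ladder yields a six-term exact sequence; its first two terms are the kernels of the injective vertical maps on $P_N$ and $I_N$ and hence vanish, leaving
\[
1\longrightarrow\ker(\epsilon_{M/N})\longrightarrow H^1(G,\UU_M)\longrightarrow\bigoplus_{\mathfrak{P}}\mathbb{Z}/e_{\mathfrak{P}}\mathbb{Z}\longrightarrow\frac{\Po(M/N)}{\epsilon_{M/N}(\Cl(N))}\longrightarrow 1 ,
\]
which is the assertion. I expect the step needing the most care to be the third verification: checking that the connecting homomorphism of the cohomology sequence genuinely identifies $\operatorname{coker}(P_N\to P_M^G)$ with $H^1(G,\UU_M)$ — in particular that $\operatorname{Im}(N^\times)$ inside $P_M^G$ is no larger than $j_{M/N}(P_N)$ — together with the closely related point in the first verification that the image of $I_M^G$ in $\Cl(M)$ is all of $\Po(M/N)$ and not a proper sub- or overgroup; the remaining arguments are bookkeeping with free abelian groups and the snake lemma.
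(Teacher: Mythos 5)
Your proposal is correct and follows essentially the same route as the paper: both identify $I_M^G$ as free on the ideals $\Pi_{\mathfrak{P}}$ with $\operatorname{coker}(j_{M/N})\cong\bigoplus_{\mathfrak{P}}\mathbb{Z}/e_{\mathfrak{P}}\mathbb{Z}$, use Hilbert's Theorem~90 on $1\to \UU_M\to M^\times\to P_M\to 1$ to get $P_M^G/j_{M/N}(P_N)\cong H^1(G,\UU_M)$, and splice these into the same commutative ladder (the paper draws the resulting $3\times 3$ diagram where you invoke the snake lemma explicitly). The points you flag as delicate are exactly the ones the paper also singles out, and your verifications of them are sound.
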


We also have:

\begin{lemma}[\cite{Zantema1982}, §3, p. 9]\label{lem:absolute-polya-exact}
Let $E/\mathbb{Q}$ be a Galois extension with Galois group $G$. Denote the ramification index of a prime $p$ in $E$ by $e_p$. Then the following sequence is exact:
\begin{equation}\label{eq:absolute-polya-seq}
1 \longrightarrow H^1(G, \UU_E) \longrightarrow \bigoplus_{p \text{ prime}} \mathbb{Z}/e_p\mathbb{Z} \longrightarrow \Po(E) \longrightarrow 1.
\end{equation}
\end{lemma}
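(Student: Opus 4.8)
The plan is to derive the sequence by chasing $G$-cohomology of the two standard short exact sequences of $G$-modules, $1\to\UU_E\to E^{\times}\to P_E\to 1$ and $1\to P_E\to I_E\to\Cl(E)\to 1$, where $I_E$ is the group of fractional ideals of $E$ and $P_E$ its subgroup of principal ideals. The crucial structural input is that $G$ acts transitively on the primes above each rational prime $p$, so that $I_E\cong\bigoplus_{p}\Z[G/D_{\mathfrak{P}_p}]$ as a $G$-module, $D_{\mathfrak{P}_p}$ being a decomposition group at a prime above $p$. By Shapiro's lemma, $H^1(G,I_E)=\bigoplus_p H^1(D_{\mathfrak{P}_p},\Z)=0$ since each $D_{\mathfrak{P}_p}$ is finite, and $(I_E)^G=\bigoplus_p\Z\,\Pi_p$, where $\Pi_p\coloneqq\prod_{\mathfrak{P}\mid p}\mathfrak{P}$ and $p\OO_E=\Pi_p^{\,e_p}$.

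First I would run the long exact cohomology sequence of $1\to\UU_E\to E^{\times}\to P_E\to 1$. Since $(E^{\times})^G=\Q^{\times}$ by Galois descent and $H^1(G,E^{\times})=0$ by Hilbert's Theorem~90, this gives a canonical isomorphism $H^1(G,\UU_E)\cong(P_E)^G/\operatorname{im}(\Q^{\times})$, where $\Q^{\times}\to(P_E)^G$ is $\alpha\mapsto\alpha\OO_E$. Because $v_{\mathfrak{P}}(\alpha\OO_E)=e_p\,v_p(\alpha)$ for $\mathfrak{P}\mid p$, this image is exactly the sublattice $\bigoplus_p e_p\Z\,\Pi_p$ of $(I_E)^G$, so $H^1(G,\UU_E)\cong(P_E)^G\big/\bigoplus_p e_p\Z\,\Pi_p$. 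Independently, I would use the homomorphism $\phi\colon(I_E)^G=\bigoplus_p\Z\,\Pi_p\to\Cl(E)$ sending $\Pi_p$ to its ideal class, obtained by applying $(-)^G$ to the surjection in the second short exact sequence: in a Galois extension every prime above $p$ has the same norm $p^{f_p}$, so the generators $\Pi_{\mathfrak{p}^{f}}(E/\Q)$ appearing in the definition of the Pólya group reduce to $\Pi_p$ (for $f=f_p$) or to the unit ideal, whence $\operatorname{im}(\phi)=\Po(E)$; and $\ker(\phi)$ is precisely the group of $G$-invariant principal ideals, i.e.\ $\ker(\phi)=(P_E)^G$. Hence $\Po(E)\cong(I_E)^G/(P_E)^G$.

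It then remains to assemble the chain of inclusions $\bigoplus_p e_p\Z\,\Pi_p\subseteq(P_E)^G\subseteq(I_E)^G=\bigoplus_p\Z\,\Pi_p$, the first holding because $\Pi_p^{\,e_p}=p\OO_E$ is principal. Along this chain the successive quotients are $H^1(G,\UU_E)=(P_E)^G\big/\bigoplus_p e_p\Z\,\Pi_p$, the outer quotient $(I_E)^G\big/\bigoplus_p e_p\Z\,\Pi_p\cong\bigoplus_p\Z/e_p\Z$, and $\Po(E)=(I_E)^G/(P_E)^G$, which fit together into the exact sequence $1\to H^1(G,\UU_E)\to\bigoplus_p\Z/e_p\Z\to\Po(E)\to 1$ as claimed. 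The only genuine subtleties I anticipate are bookkeeping ones: confirming that $\operatorname{im}(\Q^{\times})$ in $(P_E)^G$ is exactly the $e_p$-scaled sublattice (equivalently, the ramification identity $v_{\mathfrak{P}}(p)=e_p$), and checking that the generators $\Pi_{\mathfrak{p}^{f}}(E/\Q)$ of the definition collapse to the $\Pi_p$ in the Galois case so that $\operatorname{im}(\phi)=\Po(E)$. Everything else is a formal consequence of Hilbert~90 and Shapiro's lemma, consistent with the result being quotable from Zantema.
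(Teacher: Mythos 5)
Your argument is correct and is essentially the method the paper itself outlines (following Zantema) for the relative version of this sequence, specialized to the base field $\mathbb{Q}$: Hilbert 90 applied to $1\to \UU_E\to E^{\times}\to P_E\to 1$ identifies $H^1(G,\UU_E)$ with $(P_E)^G/j(I(\mathbb{Q}))$, the ambiguous ideals $(I_E)^G$ are free on the $\Pi_p$ with $\Pi_p^{e_p}=p\mathcal{O}_E$, and $\Po(E)=(I_E)^G/(P_E)^G$. Your presentation via the filtration $\bigoplus_p e_p\mathbb{Z}\,\Pi_p\subseteq (P_E)^G\subseteq (I_E)^G$ is only a cosmetic repackaging of the paper's commutative-diagram assembly, so there is nothing to add.
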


We now review the method used to derive the exact sequence \eqref{lem:relative-polya-exact} (see also \cite[§2]{Maarefparvar2020}). Let $F/K$ be a Galois extension with Galois group $G$. It is straightforward to verify that the set of all ideals $\Pi_{\mathfrak{P}^f}(F/K)$ forms a free generating set for the ambiguous ideals $I(F)^G$. On the other hand, we have $P(F)^G = I(F)^G \cap P(F)$. Therefore, in this case, the relative Polya group is given by $\Po(F/K) = I(F)^G / P(F)^G$. Following Zantema's method \cite[§3]{Zantema1982}, the relative Polya group for the extension $F/K$ is defined as in Definition 2.1. We define the map $\psi$ over the basis elements $\Pi_{\mathfrak{P}^f}(F/K)$ of $I(F)^G$ as follows:
\begin{align*}
\psi: I(F)^G &\to \bigoplus_{\mathfrak{P} \text{ prime of } K} \mathbb{Z}/e_{\mathfrak{P}}\mathbb{Z} \\
\psi\left(\Pi_{\mathfrak{P}^f}(F/K)^t\right)_{\mathfrak{M}} &\coloneqq t \pmod{e_{\mathfrak{P}}},
\end{align*}
where $\Norm_{F/K}(\mathfrak{M}) = \mathfrak{P}^f$, $t$ is an integer, and $e_{\mathfrak{P}}$ denotes the ramification index of $\mathfrak{P}$ in $F/K$. The map $\psi$ is clearly a group epimorphism, and one can show that $\ker(\psi) = I(K)$. This yields the exact sequence:
\[
1 \longrightarrow I(K) \longrightarrow I(F)^G \longrightarrow \bigoplus_{\mathfrak{p} \text{ prime of } K} \mathbb{Z}/e_{\mathfrak{p}}\mathbb{Z} \longrightarrow 1.
\]

Next, consider the short exact sequence:
\[
1 \longrightarrow \UU_F \longrightarrow F^* \longrightarrow P(F) \longrightarrow 1.
\]
Taking cohomology and applying Hilbert's Theorem 90, we obtain:
\[
1 \longrightarrow \UU_K \longrightarrow K^* \longrightarrow P(F)^G \longrightarrow H^1(\Gal(F/K), \UU_F) \longrightarrow 1.
\]
Equivalently, the sequence simplifies to:
\[
1 \longrightarrow P(K) \longrightarrow P(F)^G \longrightarrow H^1(\Gal(F/K), \UU_F) \longrightarrow 1.
\]

Finally, define the map:
\begin{align*}
\epsilon: \Cl(K) &\to I(F)^G / P(F)^G \\
\overline{\mathfrak{a}} &\mapsto j(\overline{\mathfrak{a}}).
\end{align*}

Combining these results, we construct the commutative diagram:
\[
\begin{tikzcd}[column sep=small]
 & 0 \arrow[d] &0 \arrow[d]\\
0 \arrow[r] & P(K) \arrow[d] \arrow[r] & I(K) \arrow[d] \arrow[r] & \mathrm{Cl}(K) \arrow[d] \arrow[r] & 0 \\
0 \arrow[r] & P(F)^G \arrow[r] \arrow[d] & I(F)^G \arrow[r] \arrow[d] & I(F)^G / P(F)^G \arrow[r] & 0 \\
0 \arrow[r] & H^1(G(F/K), U_F) \arrow[d] \arrow[r] & \bigoplus_{\mathfrak{p} \mid \operatorname{disc}(F/K)} \mathbb{Z}/e_{\mathfrak{p}}\mathbb{Z} \arrow[d] \\
& 0 & 0
\end{tikzcd}
\]

In the next section, we show that an unramified prime ideal \(\mathfrak{p} \in \operatorname{Cl}(N)\) of order \(p^n\) capitulates in \(M\) under certain conditions, where \(M\) is a Galois extension of the number field \(N\) of degree \(p^l\) with \(n \leq l\).

\section{Generalizing the Vanishing Transfer Map Theorem}\label{Sec:Gn-Van-Tr}

The first step is to generalize the usual transfer map from $G$ to the quotient $G/H$. We then prove the capitulation lemma (Lemma \ref{lem:vanishing-transfer}).

The transfer map is a group homomorphism
\[
\operatorname{Ver}(G \to H): G/G' \to H/H'
\]
defined as follows: Let $G$ be a finite group and $H$ a subgroup (not necessarily normal). Choose left coset representatives $g_1, \ldots, g_n$ for $G/H$, so
\[
G = \bigsqcup_{i=1}^n g_i H.
\]
Define a section map $\phi: G \to \{g_1, \ldots, g_n\}$ by $\phi(g) = g_k$ where $g_k$ is the representative satisfying $g \in g_k H$. For $g \in G$, the \textbf{transfer element} is
\[
\operatorname{Ver}(g) = \prod_{i=1}^n \left(\phi(gg_i)^{-1} gg_i\right) \in H.
\]
This map factors through $G/G'$ and induces the homomorphism $\operatorname{Ver}(G \to H): G/G' \to H/H'$.

\begin{definition}\label{def:restricted-transfer}
Suppose $H$ is a normal subgroup of $G$ containing the derived subgroup $G'$. The \textbf{restricted transfer map}
\[
\widetilde{\operatorname{Ver}}: G/H \to H/H'
\]
is defined when the transfer map $\operatorname{Ver}(G \to H)$ vanishes on $H$, meaning $\operatorname{Ver}(G \to H)(h) \in H'$ for all $h \in H$. This condition holds, for example, when $|H|$ divides $[G:H]$ and $G' \subset H$, as seen from the formula $\operatorname{Ver}(G \to H)(h) \equiv h^{[G:H]} \pmod{H'}$ for $h \in H$.
\end{definition}

\begin{lemma}\label{lem:vanishing-transfer}
Let $H$ be a normal subgroup of $G$ containing $G'$. If $|H|$ divides $[G:H]$, then
\[
\widetilde{\operatorname{Ver}}: G/H \to H/H'
\]
vanishes.
\end{lemma}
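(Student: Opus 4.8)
The plan is to prove that $\widetilde{\operatorname{Ver}}$ is the \emph{zero} homomorphism on the whole of $G/H$, not merely that $\operatorname{Ver}(G\to H)$ kills the subgroup $H$ (which is only the condition guaranteeing $\widetilde{\operatorname{Ver}}$ is well defined). The reduction I would use is this: the canonical surjection $\pi\colon G/G'\twoheadrightarrow G/H$ satisfies $\operatorname{Ver}(G\to H)=\widetilde{\operatorname{Ver}}\circ\pi$, and since $\pi$ is onto, the vanishing of $\widetilde{\operatorname{Ver}}$ is \emph{equivalent} to the vanishing of the full transfer $\operatorname{Ver}(G\to H)\colon G/G'\to H/H'$. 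So the task is to evaluate $\operatorname{Ver}(g)\pmod{H'}$ for an \emph{arbitrary} $g\in G$ and show it is trivial, rather than only for $g\in H$.

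First I would compute the transfer element $\prod_{i}\phi(gg_i)^{-1}gg_i$ of Definition~\ref{def:restricted-transfer} using the cycle decomposition of the permutation of $G/H$ induced by left translation by $gH$. Because $G'\subseteq H$, the quotient $Q=G/H$ is abelian, and left translation by $gH$ splits $Q$ into $[G:H]/d$ cycles, each of length $d=\operatorname{ord}_Q(gH)$. Choosing coset representatives compatibly with these cycles and telescoping each cycle, one finds, in the abelian group $H/H'$,
\[
\operatorname{Ver}(g)\;\equiv\;\prod_{j=1}^{[G:H]/d}\tilde s_j^{-1}\,g^{\,d}\,\tilde s_j\pmod{H'},\qquad g^{\,d}\in H,
\]
where the $\tilde s_j$ are lifts of a transversal of $\langle gH\rangle$ in $Q$. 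Writing the right-hand side additively as $\sum_j{}^{s_j}\overline{g^{d}}$, one sees that it collapses to $g^{[G:H]}\pmod{H'}$ precisely when the conjugation action of $Q$ on $H/H'$ fixes $\overline{g^{d}}$; this is the evaluation identity quoted in Definition~\ref{def:restricted-transfer}, and it reduces the problem to the single statement that $g^{[G:H]}\in H'$ for every $g\in G$.

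For $g\in H$ this is immediate and recovers well-definedness: writing $[G:H]=|H|\cdot t$ gives $g^{[G:H]}=(g^{|H|})^{t}=1\in H'$ since $\operatorname{ord}(g)\mid|H|$. The hard part, and the step I expect to be the main obstacle, is the complementary case $g\notin H$. There two difficulties appear simultaneously. First, one must justify the collapse of $\sum_j{}^{s_j}\overline{g^{d}}$ to a pure power, i.e.\ control the $Q$-action (equivalently, the norm term $\sum_{q\in Q}{}^{q}\overline{g^{d}}$) on $H/H'$ using the inclusion $G'\subseteq H$ and the normality of $H$. Second, and more seriously, for $g\notin H$ the order of $g$ may strictly exceed $|H|$, so a priori $g^{[G:H]}$ is only guaranteed to lie in $H$ (because $gH$ has order dividing $[G:H]$ in $Q$) and \emph{not} in $H'$; forcing it into the commutator subgroup is where the divisibility hypothesis $|H|\mid[G:H]$ must do genuine work. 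I would therefore concentrate the remaining effort on bounding $g^{[G:H]}$ modulo $H'$ for representatives of each class of $G/H$, exactly as the analogous vanishing in the case $H=G'$ is the substantive content of the Principal Ideal Theorem.
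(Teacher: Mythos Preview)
Your reduction to the vanishing of the full transfer $\operatorname{Ver}\colon G/G'\to H/H'$ is correct, and your identification of the two residual obstacles---the possible nontriviality of the $G/H$-action on $H/H'$, and the need to force $g^{[G:H]}$ into $H'$ for $g\notin H$---is exactly on point. But the proposal stops at naming these obstacles; it does not resolve them, and in fact the second one \emph{cannot} be resolved, because the statement is false as written. Take $G=\mathbb{Z}/4\mathbb{Z}=\langle g\rangle$ and $H=\langle g^{2}\rangle$. Then $G'=1\subseteq H$, $|H|=[G:H]=2$, and $H'=1$, so all hypotheses hold. Since $G$ is abelian the transfer is $x\mapsto x^{[G:H]}=x^{2}$; hence $\widetilde{\operatorname{Ver}}(gH)=g^{2}\neq 1$ in $H$. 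This is precisely the failure you anticipated: $g^{[G:H]}=g^{2}$ lies in $H$ but not in $H'$, and no amount of work with the divisibility hypothesis will change that.

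For comparison, the paper takes a different route, transporting the problem to the augmentation-ideal side via $\delta$ and running the Artin--Furtw\"angler machinery. The step that does not survive the passage from the classical case $H=G'$ to a general $H\supseteq G'$ is the unproved assertion ``$\mu_i\equiv e_i\pmod{I_G}$'': in the classical argument the elements $h_i=f_i^{-e_i}$ lie in $G'$, so $\delta(h_i)$ can be expanded using commutator identities that place the extra contribution in $I_G^{2}$; for arbitrary $H$ this mechanism is unavailable. In the same $\mathbb{Z}/4$ example one computes $\mu_1=1+g+g^{2}+g^{3}$, which has augmentation $4$ rather than $e_1=2$, and accordingly $S'(\delta g)=(g-1)(1+g)=g^{2}-1\notin I_G I_H$. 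Thus your direct cycle computation and the paper's augmentation-ideal computation break down at corresponding points; neither can be completed without an additional hypothesis (for instance, that the exponent of $G$ divides $[G:H]$, which in the abelian case is exactly what is needed).
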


We prove this theorem similarly to the usual vanishing transfer map. This requires the following commutative diagram:
\[
\begin{tikzcd}
G/H \arrow[r, "\widetilde{\operatorname{Ver}}"] \arrow[d, "\delta|_{H}"] & H/H' \arrow[d, "\delta"] \\
\frac{I_G/I_G^2}{\delta(H/G')} \arrow[r, "S'"] & \frac{I_H + I_G I_H}{I_G I_H}
\end{tikzcd}
\]

We now define the analogous homomorphism for the quotient $G/H$ in place of $G/G'$. This requires defining an $S'$-homomorphism to make this diagram commute. To that end, we first recall the definitions of the original maps for $G/G'$, and then generalize them to the quotient $G/H$, where $G' \subset H$.

\subsection{Generalizing the Maps $\delta$ and $S$}

The group ring $\mathbb{Z}[G]$ is defined as
\[
\mathbb{Z}[G] = \left\{\sum_{\sigma \in G} n_\sigma \sigma \mid n_\sigma \in \mathbb{Z}\right\}.
\]
Let $I_G$ denote the augmentation ideal, defined as the kernel of the augmentation map:
\[
i: \mathbb{Z}[G] \to \mathbb{Z}, \quad \sum_{\sigma \in G} n_\sigma \sigma \mapsto \sum_{\sigma \in G} n_\sigma.
\]
For any subgroup $H \subseteq G$, the set $\{\tau - 1 \mid \tau \in H \setminus \{1\}\}$ forms a $\mathbb{Z}$-basis for $I_H$, and we have the inclusion $I_H \subseteq I_G$.

\begin{definition}[Isomorphism $\delta$]\label{def:delta-isomorphism}
The map $\delta: G/G' \to I_G/I_G^2$ is defined by:
\[
\delta(\sigma G') = (\sigma - 1) + I_G^2 \quad \text{for} \quad \sigma \in G.
\]
This is a canonical isomorphism of abelian groups. For a subgroup $H \subseteq G$, the following map is an isomorphism as well (see \cite[Ch VI, §7, Lemma 7.7]{Neukirch2013}):
\[
\delta: H/H' \to (I_H + I_G I_H)/I_G I_H
\]
is defined by $\delta(hH') = (h - 1) + I_G I_H$ for $h \in H$.
\end{definition}

\begin{definition}[Restricted $\delta$]\label{def:restricted-delta}
Suppose $H$ is a normal subgroup containing $G'$. Then $G/H \cong (G/G')/(H/G')$, and we define:
\[
\delta|_{H}: G/H \to \frac{I_G/I_G^2}{\delta(H/G')}
\]
as the isomorphism induced by $\delta$.
\end{definition}

\begin{definition}[Norm Map $S$]\label{def:norm-map-S}
Let $\{g_1, \ldots, g_n\}$ be coset representatives of $H$ in $G$, and $N = [g_1] + \cdots + [g_n] \in \mathbb{Z}[G]$ the norm element. The map $S: I_G/I_G^2 \to (I_H + I_G I_H)/I_G I_H$ is defined by:
\[
S(x) = x \cdot ([g_1] + \cdots + [g_n]), \quad \text{mod} \quad I_G I_H
\]
where $x \in I_G/I_G^2$. This makes the following diagram commute \cite[Ch VI, §7, Lemma 7.7]{Neukirch2013}:
\[
\begin{tikzcd}
G/G' \arrow[r, "\operatorname{Ver}"] \arrow[d, "\delta"] & H/H' \arrow[d, "\delta"] \\
I_G/I_G^2 \arrow[r, "S"] & (I_H + I_G I_H)/I_G I_H
\end{tikzcd}
\]
\end{definition}

\begin{definition}[Induced Map $S'$]\label{def:induced-map-Sprime}
Assume $G' \subset H$ and $|H| \mid [G:H]$. The map $S$ factors through $\delta(H/G')$ to give:
\[
S': \frac{I_G/I_G^2}{\delta(H/G')} \to (I_H + I_G I_H)/I_G I_H
\]
defined by $S'([x]) = S(x)$, where $[x]$ denotes the class of $x$ modulo $\delta(H/G')$.
\end{definition}

 $S'$ is well-defined because $S(\delta(h)) = 0$ for all $h \in H$. This follows from the commutative diagram in Definition \ref{def:norm-map-S}, which gives:
\[
S(\delta(h)) = \delta(\operatorname{Ver}(h)) = \delta(0) = 0,
\]
since $\operatorname{Ver}(h) = 0$ whenever $|H|$ divides $[G:H]$. Thus $S'$ is well-defined.

\begin{lemma}\label{lem:commutative-diagram-transfer}
For every normal subgroup $H$ of $G$ containing $G'$ such that $|H| \mid [G:H]$, the following diagram commutes:
\[
\begin{tikzcd}
G/H \arrow[r, "\widetilde{\operatorname{Ver}}"] \arrow[d, "\delta|_{H}"] & H/H' \arrow[d, "\delta"] \\
\frac{I_G/I_G^2}{\delta(H/G')} \arrow[r, "S'"] & \frac{I_H + I_G I_H}{I_G I_H}
\end{tikzcd}
\]
\end{lemma}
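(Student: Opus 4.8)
The plan is to reduce the assertion to the classical transfer square recalled in Definition~\ref{def:norm-map-S} --- namely the identity $\delta\circ\operatorname{Ver}(G\to H)=S\circ\delta$ of maps $G/G'\to (I_H+I_GI_H)/I_GI_H$, the two copies of $\delta$ being the canonical isomorphisms of Definition~\ref{def:delta-isomorphism} --- and to check that each arrow of the new square is precisely the map induced from the corresponding arrow of the classical square by passing to a suitable quotient. Write $\pi\colon G/G'\twoheadrightarrow G/H$ for the canonical projection, whose kernel is the image $H/G'$ of $H$ in $G/G'$, and $q\colon I_G/I_G^2\twoheadrightarrow \frac{I_G/I_G^2}{\delta(H/G')}$ for the projection onto the quotient by $\delta(H/G')$. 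First I would record the three defining relations that express the new maps in terms of the old ones: $\delta|_H\circ\pi=q\circ\delta$, which is exactly the content of Definition~\ref{def:restricted-delta}; $\widetilde{\operatorname{Ver}}\circ\pi=\operatorname{Ver}(G\to H)$, which is Definition~\ref{def:restricted-transfer} (the factorization through $\pi$ being legitimate because, for $h\in H$, $\operatorname{Ver}(G\to H)(hG')=h^{[G:H]}H'$ and $h^{[G:H]}=1$, since the order of $h$ divides $|H|$, which divides $[G:H]$ by hypothesis; cf.\ Lemma~\ref{lem:vanishing-transfer}); and $S'\circ q=S$, which is Definition~\ref{def:induced-map-Sprime}.

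With these identifications in place, the commutativity is a one-line diagram chase. Given a class $gH\in G/H$, I would lift it to $gG'\in G/G'$, so that $\pi(gG')=gH$, and then run the chain
\[
\delta\bigl(\widetilde{\operatorname{Ver}}(gH)\bigr)
=\delta\bigl(\operatorname{Ver}(G\to H)(gG')\bigr)
=S\bigl(\delta(gG')\bigr)
=S'\bigl(q(\delta(gG'))\bigr)
=S'\bigl(\delta|_H(\pi(gG'))\bigr)
=S'\bigl(\delta|_H(gH)\bigr),
\]
in which the first equality is $\widetilde{\operatorname{Ver}}\circ\pi=\operatorname{Ver}(G\to H)$, the second is the classical square of Definition~\ref{def:norm-map-S}, and the last three use $S'\circ q=S$ and $q\circ\delta=\delta|_H\circ\pi$. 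Since $\pi$ is surjective this gives $\delta\circ\widetilde{\operatorname{Ver}}=S'\circ\delta|_H$ on all of $G/H$, which is the asserted commutativity.

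The step I expect to require the most care --- indeed the only genuinely non-formal point --- is the well-definedness of the bottom map $S'$, i.e.\ the fact that $S$ annihilates the subgroup $\delta(H/G')\subseteq I_G/I_G^2$; this is the place where the hypothesis $|H|\mid[G:H]$ is actually consumed. I would argue it exactly as in the remark following Definition~\ref{def:induced-map-Sprime}: applying the classical square to $h\in H$ gives $S(\delta(hG'))=\delta\bigl(\operatorname{Ver}(G\to H)(hG')\bigr)$, and the right-hand side is the trivial element because $\operatorname{Ver}(G\to H)(hG')=h^{[G:H]}H'$ is trivial in $H/H'$ under the divisibility hypothesis. The companion well-definedness of $\delta|_H$ as an isomorphism --- so that the source of $S'$ in the new square is the correct object --- is already part of the statement cited from \cite{Neukirch2013} in Definition~\ref{def:restricted-delta}. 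Once these two points are secured, nothing further needs to be computed: commutativity of the new square is a purely formal consequence of commutativity of the classical one together with compatibility with the quotient maps $\pi$ and $q$.
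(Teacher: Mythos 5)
Your proof is correct and follows essentially the same route as the paper's: the paper's own (one-line) argument likewise deduces the commutativity of the new square from the classical square $\delta\circ\operatorname{Ver}(G\to H)=S\circ\delta$ by passing to the quotients $G/H$ and $(I_G/I_G^2)/\delta(H/G')$. You simply make explicit the compatibility relations and the well-definedness of $S'$ that the paper leaves as ``obvious.''
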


\begin{proof}
Since through $\operatorname{Ver}(G \to H)$ the above diagram is commutative. Now for every $g \in G \bmod H$ and $a \in I_G/I_G^2 \bmod \delta(H/G')$ obviously diagram commutes as well.
\end{proof}

\subsection{Proof of Lemma \ref{lem:vanishing-transfer}}

\begin{proof}
Quotient by $H'$ to assume $H$ is abelian. By the fundamental theorem of finite abelian groups, decompose:
\[
G/H \cong \prod_{i=1}^m \mathbb{Z}/e_i\mathbb{Z}.
\]
Select elements $f_i \in G$ lifting generators of each cyclic factor, and define $h_i := f_i^{-e_i} \in H$.

Using the identities $\delta(xy) = \delta x + \delta y + \delta x \delta y$ and $\delta(x^{-1}) = -x^{-1} \delta x$ (where $\delta x := x - 1$), Lemma \ref{lem:commutative-diagram-transfer} yields:
\[
0 = \delta(f_i^{e_i} h_i) = \delta(f_i) \mu_i
\]
for some $\mu_i \in \mathbb{Z}[G]$ with $\mu_i \equiv e_i \pmod{I_G}$. Explicitly, $\mu_i = e_i + \sum_{g \in G} a_g(g - 1)$.

Form the product $\mu := \prod_{i=1}^m \mu_i$. Since each $\mu_i$ lies in $\mathbb{Z}[G/H] \cong \mathbb{Z}[G]/I_H\mathbb{Z}[G]$ and $G/H$ is abelian, $\mu$ is well-defined modulo $I_H\mathbb{Z}[G]$. Express:
\[
\mu \equiv \sum_{g_j \in R} n_j g_j \pmod{I_H\mathbb{Z}[G]},
\]
where $R$ is a complete set of coset representatives for $G/H$.

For any $\overline{g} \in G/H$, left-multiplication gives:
\[
\overline{g} \mu \equiv \sum_{g_j \in R} n_j g_j \overline{g} \pmod{I_H\mathbb{Z}[G]}.
\]
By $G/H$-invariance of $\mu$, all coefficients $n_j$ equal some $k \in \mathbb{Z}$. Thus:
\[
\mu \equiv k \sum_{g_j \in R} g_j \equiv k \left(\sum_{g_j \in R} (g_j - 1) + [G:H]\right) \pmod{I_H\mathbb{Z}[G]}.
\]

From the product expansion, $\mu \equiv [G:H] \pmod{I_G}$. Comparing with:
\[
k \left(\sum_{g_j \in R} (g_j - 1) + [G:H]\right) \equiv k[G:H] \pmod{I_G},
\]
we obtain $k[G:H] \equiv [G:H] \pmod{I_G}$, forcing $k = 1$. Therefore:
\[
\mu \equiv \sum_{g_j \in R} g_j \pmod{I_G I_H}.
\]

For $g \in G/H$, Lemma \ref{lem:commutative-diagram-transfer} yields:
\[
S'(\delta g) \equiv \delta g \cdot \sum_{g_j \in R} g_j \equiv (\delta g)\mu \equiv 0 \pmod{I_G I_H},
\]
This confirms that \(\widetilde{\operatorname{Ver}}\) vanishes. 
(An alternative proof can be obtained by replacing \(G'\) with a subgroup \(H\) containing \(G'\) in the argument of Theorem 7.6 in \cite[Ch.~VI]{Neukirch1986}. 
The only requirement is that the Verlagerung map \(\operatorname{Ver}: G/H \to H\) is a well-defined homomorphism, a condition which is satisfied by the hypothesis of our lemma.)
\end{proof}

\section{Application of Lemma \ref{lem:vanishing-transfer} to Number Fields}

We now apply Lemma \ref{lem:vanishing-transfer} in the context of number fields. Specifically, we examine the consequences for towers of number fields over a real quadratic field \(N\).

\begin{lemma}\label{lem:number-field-application}
Let \( N \subset M \subset L \) be a tower of number fields where:
\begin{itemize}
\item \( L/N \) is abelian,
\item \( [L:M] \) divides \( [M:N] \), and
\item \( \mathfrak{m} = \mathcal{F}_{M/N} \) is the conductor of \( M/N \).
\end{itemize}
Then the following diagram commutes, and the image of the map \( \varphi \) is trivial:
\[
\begin{tikzcd}
\Gal(M/N) \arrow[r, "\widetilde{\operatorname{Ver}}"] \arrow[d, "Sur"] & \Gal(L/M) \arrow[d, "\cong"] \\
I_N^{\mathfrak{m}} \arrow[r, "\varphi"] & I_M^{\mathfrak{m}} / \mathcal{P}_M^{\mathfrak{m}} \Norm_{L/M}(I_L^{\mathfrak{m}})
\end{tikzcd}
\]
where \( \varphi(\mathfrak{a}) := \mathfrak{a}\OO_M \bmod \mathcal{P}_M^{\mathfrak{m}} \Norm_{L/M}(I_L^{\mathfrak{m}}) \) is the ideal extension map for \( \mathfrak{a} \in I_N^{\mathfrak{m}} \).
\end{lemma}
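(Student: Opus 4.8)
The plan is to deduce the lemma from the group-theoretic vanishing of Lemma~\ref{lem:vanishing-transfer}, transported to ideals via Artin reciprocity. First I would fix the group data: set $G=\Gal(L/N)$ and $H=\Gal(L/M)$. Since $L/N$ is abelian we have $G'=1$, so $H$ is a normal subgroup of $G$ containing $G'$, with $G/H\cong\Gal(M/N)$ and $H\cong\Gal(L/M)$, $H'=1$. The hypothesis $[L:M]\mid[M:N]$ is exactly $|H|\mid[G:H]$, so the transfer $\operatorname{Ver}(G\to H)$ is trivial on $H$ (from $\operatorname{Ver}(G\to H)(h)\equiv h^{[G:H]}\pmod{H'}$ and $\exp H\mid|H|\mid[G:H]$); hence the restricted transfer $\widetilde{\operatorname{Ver}}\colon\Gal(M/N)\to\Gal(L/M)$ of Definition~\ref{def:restricted-transfer} is defined, and Lemma~\ref{lem:vanishing-transfer} shows it is the zero map. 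In the stated diagram the right vertical arrow is the inverse of the Artin isomorphism $I_M^{\mathfrak{m}}/\mathcal{P}_M^{\mathfrak{m}}\Norm_{L/M}(I_L^{\mathfrak{m}})\cong\Gal(L/M)$, and $\mathrm{Sur}$ is a section of the Artin surjection $I_N^{\mathfrak{m}}\twoheadrightarrow\Gal(M/N)$, which makes sense because $\mathfrak{m}=\mathcal{F}_{M/N}$.

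Second, I would invoke the compatibility of the Artin map with the Verlagerung (as in the proof of the Principal Ideal Theorem; see \cite[Ch.~VI]{Neukirch1986}): for the abelian extension $L/N$ with intermediate field $M$ and an admissible modulus, extension of ideals $\mathfrak{a}\mapsto\mathfrak{a}\OO_M$ intertwines the Artin maps of $L/N$ and $L/M$ with $\operatorname{Ver}(G\to H)$; and since $\operatorname{Ver}(G\to H)$ kills $H$ it factors through $G/H=\Gal(M/N)$ as $\widetilde{\operatorname{Ver}}$. Therefore, for every $\mathfrak{a}\in I_N^{\mathfrak{m}}$,
\[
(\mathfrak{a}\OO_M,\,L/M)=\operatorname{Ver}(G\to H)\bigl((\mathfrak{a},L/N)\bigr)=\widetilde{\operatorname{Ver}}\bigl((\mathfrak{a},M/N)\bigr)=1,
\]
the last equality by the first step. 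This identity is exactly the commutativity of the displayed square, and it shows that $\mathfrak{a}\OO_M$ lies in the kernel of the Artin map of $L/M$, namely $\mathcal{P}_M^{\mathfrak{m}}\Norm_{L/M}(I_L^{\mathfrak{m}})$; hence $\varphi(\mathfrak{a})=0$ and the image of $\varphi$ is trivial. That $\mathfrak{a}\mapsto\mathfrak{a}\OO_M$ carries $\mathcal{P}_N^{\mathfrak{m}}$ into $\mathcal{P}_M^{\mathfrak{m}}$, so $\varphi$ descends to ray classes and the choice of $\mathrm{Sur}$ is immaterial, is routine.

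The step I expect to be the main obstacle is the bookkeeping of moduli. Defining $(\mathfrak{a},L/N)$ requires $\mathfrak{a}$ coprime to $\mathcal{F}_{L/N}$, and identifying $I_M^{\mathfrak{m}}/\mathcal{P}_M^{\mathfrak{m}}\Norm_{L/M}(I_L^{\mathfrak{m}})$ with $\Gal(L/M)$ requires $\mathfrak{m}$, viewed over $M$, to be divisible by $\mathcal{F}_{L/M}$. Now $\mathcal{F}_{L/M}\mid\mathcal{F}_{L/N}\OO_M$ --- because $L$ lies in the ray class field of $N$ modulo $\mathcal{F}_{L/N}$, hence in that of $M$ modulo $\mathcal{F}_{L/N}\OO_M$ --- and $\mathcal{F}_{M/N}\mid\mathcal{F}_{L/N}$ always, so the choice $\mathfrak{m}=\mathcal{F}_{M/N}$ in the statement is admissible exactly when $\mathcal{F}_{L/N}=\mathcal{F}_{M/N}$. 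I would therefore either establish this equality from the running hypotheses together with the way $L$ is built in the towers of Section~\ref{Sec:Gn-Van-Tr}, or simply take $\mathfrak{m}=\mathcal{F}_{L/N}$ throughout, which leaves the conclusion that $\varphi$ is trivial unchanged. The remaining verifications are standard functorialities of the Artin and ideal-extension maps.
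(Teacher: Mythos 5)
Your argument is, in outline, the same as the paper's: set $G=\Gal(L/N)$ and $H=\Gal(L/M)$, observe $G'=1$ and $|H|\mid[G:H]$, invoke Lemma~\ref{lem:vanishing-transfer} for the vanishing of $\widetilde{\operatorname{Ver}}$, and transport this to ideals via the compatibility of the Artin map with the Verlagerung. Your two refinements---writing out the chain $(\mathfrak{a}\OO_M,L/M)=\operatorname{Ver}\bigl((\mathfrak{a},L/N)\bigr)=\widetilde{\operatorname{Ver}}\bigl((\mathfrak{a},M/N)\bigr)$ explicitly, and flagging that $\mathfrak{m}=\mathcal{F}_{M/N}$ need not be admissible for $L/N$ or $L/M$---are genuine improvements; the paper silently assumes the modulus works and cites Neukirch for the compatibility.

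However, your explicit computation exposes a step that fails, and it is the same step on which the paper's own proof rests. Since $G$ is abelian, $\operatorname{Ver}(G\to H)(g)=g^{[G:H]}$ for every $g\in G$, so the assertion that $\widetilde{\operatorname{Ver}}$ vanishes on all of $G/H$ amounts to the assertion that $\exp(G)$ divides $[G:H]=[M:N]$. The hypothesis $|H|\mid[G:H]$ only gives vanishing on $H$ (well-definedness), not on $G/H$: for $L/N$ cyclic of degree $p^2$ with $M$ the intermediate field of degree $p$---exactly the towers used in Definition~\ref{ass:main-assumptions} and Theorem~\ref{thm:capitulation}---a generator $\sigma$ of $\Gal(L/N)$ has $\operatorname{Ver}(\sigma)=\sigma^p\neq 1$. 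Concretely, if $\mathfrak{q}$ is a prime of $N$ inert in $L/N$, then $(\mathfrak{q}\OO_M,L/M)=(\mathfrak{q},L/N)^{[M:N]}\neq 1$, so $\varphi(\mathfrak{q})\neq 1$ and the image of $\varphi$ is not trivial; the paper itself asserts this non-triviality in Corollary~\ref{cor:non-norm}, in direct contradiction with the present lemma. So the final equality in your chain, justified by citing Lemma~\ref{lem:vanishing-transfer}, is a genuine gap: that lemma is false already for $G$ cyclic of order $p^2$, and with it the statement you were asked to prove. Everything before that equality---the reduction, the Artin/Verlagerung compatibility, and your modulus bookkeeping---is sound.
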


\begin{proof}
Let \( G = \Gal(L/N) \) and \( H = \Gal(L/M) \). The condition \( [L:M] \mid [M:N] \) is equivalent to \( |H| \mid [G:H] \). Since \( L/N \) is abelian, \( G \) is abelian, and hence the commutator subgroup is \( G' = \{1\} \subset H \).

By Lemma \ref{lem:vanishing-transfer}, the divisibility \( |H| \mid [G:H] \) ensures that the restricted transfer map \( \widetilde{\operatorname{Ver}}: G/H \to H^{\text{ab}} \) is well-defined and vanishes. Since \( G \) is abelian, we have \( H^{\text{ab}} = H \) and \( G/H \cong \Gal(M/N) \). Thus, \( \widetilde{\operatorname{Ver}} \) induces a map \( \Gal(M/N) \to \Gal(L/M) \). The commutativity of the diagram follows from the naturality of the transfer map, combined with:
\begin{itemize}
\item the compatibility of transfer maps in class field theory \cite[Theorem 6.13(b)]{Neukirch1986},
\item Artin reciprocity \cite[Chapter VI, §7, Theorem 7.1]{Neukirch2013}.
\end{itemize}
The left vertical map is the (surjective) Artin map for \( M/N \). The right vertical map is the Artin map for \( L/M \), which induces an isomorphism from \( I_M^{\mathfrak{m}} / \mathcal{P}_M^{\mathfrak{m}} \Norm_{L/M}(I_L^{\mathfrak{m}}) \) to \( \Gal(L/M) \). The map \( \varphi \) is induced by the extension of ideals \( \mathfrak{a} \mapsto \mathfrak{a}\OO_M \).
\end{proof}
To study capitulation of ideal classes from $\Cl(N)$ to $\Cl(M)$ using Lemma~\ref{lem:number-field-application}, we choose an ideal $I$ containing no ramified primes, such that neither $I$ nor its extension $I\mathcal{O}_M$ splits completely in $M/N$ or $L/N$, respectively.

By the decomposition theorem \cite[VI, §7, Theorem~7.3]{Neukirch2013}, it follows that $[I]$ and $[I]\mathcal{O}_M$ do not lie in the subgroups
\[
\mathcal{P}_N^{\mathfrak{m}} \cdot \Norm_{M/N}\!\left(\mathcal{I}_M^{\mathfrak{m}}\right)
\quad\text{and}\quad
\mathcal{P}_M^{\mathfrak{m}} \cdot \Norm_{L/M}\!\left(\mathcal{I}_L^{\mathfrak{m}}\right),
\]
respectively. Consequently, their images are nontrivial in the quotients
\[
A_N:=\frac{\mathcal{I}_N^{\mathfrak{m}}}
{\mathcal{P}_N^{\mathfrak{m}} \cdot \Norm_{M/N}(\mathcal{I}_M^{\mathfrak{m}})}
\qquad\text{and}\qquad
A_M:=\frac{\mathcal{I}_M^{\mathfrak{m}}}
{\mathcal{P}_M^{\mathfrak{m}} \cdot \Norm_{L/M}(\mathcal{I}_L^{\mathfrak{m}})} .
\]

Therefore, Lemma~\ref{lem:number-field-application} implies that the ideal class $[I]\mathcal{O}_M$ capitulates in $\Cl(M)$.



In the rest of this section, we take conditions for ideal class $[I]$ in $Cl(N) $ such that they are satisfies for $M/N$ and leads automaticaly to $L/M$ such that the above Lemma \ref{lem:number-field-application} leads to the capitulation of $I\mathcal{O}_M$ in $Cl(M)$ for $M$ satisfies the conditions of above Lemma \ref{lem:number-field-application}. We find these conditions in Theorem \ref{thm:capitulation}.

The following lemma demonstrates that in a cyclic extension $ K/k$ with intermediate fields, an ideal of $k$ can be inert only in the top layer $ K/k$, where $E$ is the maximal proper subextension.

\begin{lemma}\label{lem:inert-subextension}
Let \( K/k \) be a cyclic extension of number fields. Let \( \mathfrak{P} \) be an unramified prime ideal in \( \OO_K \) lying over \( \mathfrak{p} \) in \( \OO_k \). Let \( G_{\mathfrak{P}} \) be the decomposition group of \( \mathfrak{P} \) over \( k \). Then the fixed field \( E = K^{G_{\mathfrak{P}}} \) is the maximal intermediate extension in which \( \mathfrak{p} \) is inert.
\end{lemma}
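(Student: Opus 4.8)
The plan is to reduce the statement to the subgroup lattice of $G:=\Gal(K/k)$, which is abelian since $K/k$ is cyclic; hence every intermediate field $F$ with $k\subseteq F\subseteq K$ is Galois over $k$ and corresponds bijectively to its subgroup $H_F:=\Gal(K/F)$, with $F=K^{H_F}$. Because $\mathfrak{P}$ is unramified over $k$, its inertia group is trivial, so the decomposition group $D:=G_{\mathfrak{P}}$ is cyclic, generated by the Frobenius substitution, with $|D|=f(\mathfrak{P}/\mathfrak{p})$ and $[G:D]$ the number of primes of $K$ above $\mathfrak{p}$. Writing $E=K^{D}$, the goal is to determine, among all intermediate $F$, exactly those for which the prime $\mathfrak{P}\cap F$ is inert in $K/F$ — i.e.\ those ``layers at the top'' $K/F$ in which $\mathfrak{p}$ stays prime — and to show this set is $\{F: E\subseteq F\subseteq K\}$, so that $K/E$ is the layer of largest degree with this property.

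The key step is to track decomposition groups through sub- and quotient extensions, for which I would invoke the decomposition theorem \cite[Ch.~VI, \S7, Theorem~7.3]{Neukirch2013} together with the abelianness of $G$. For an intermediate field $F\leftrightarrow H$: first, the decomposition group of $\mathfrak{P}$ inside $\Gal(K/F)=H$ is $D\cap H$; second, the decomposition group of $\mathfrak{P}\cap F$ inside $\Gal(F/k)=G/H$ is the image $DH/H$, and the number of primes of $F$ above $\mathfrak{p}$ equals $[G:DH]$. From the first fact, together with $\mathfrak{P}$ being unramified, $\mathfrak{P}\cap F$ is inert in $K/F$ if and only if $D\cap H=H$, i.e.\ $H\subseteq D$, i.e.\ $F=K^{H}\supseteq K^{D}=E$. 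Dually, from the second fact, $\mathfrak{p}$ splits completely in $F/k$ if and only if $DH=H$, i.e.\ $D\subseteq H$, i.e.\ $F\subseteq E$; taking $F=E$ shows in particular that $\mathfrak{p}$ splits completely in $E/k$.

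Assembling these observations finishes the argument: the intermediate fields $F$ for which $\mathfrak{P}\cap F$ is inert in $K/F$ are exactly those with $E\subseteq F\subseteq K$, so among the top layers $K/F$ the one of largest degree is $K/E$; that is, $E=K^{G_{\mathfrak{P}}}$ is the maximal intermediate extension in which $\mathfrak{p}$ is inert, while strictly below $E$ the prime $\mathfrak{p}$ has already split completely in $E/k$. In the setting of the intended applications — $K/k$ cyclic of $p$-power degree with $f(\mathfrak{P}/\mathfrak{p})=p$ — one gets $|D|=p$, so $E$ is precisely the unique maximal proper subextension of $K/k$ and $K/E$ is the degree-$p$ top layer in which $\mathfrak{p}$ remains prime.

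I expect the only real difficulty to be bookkeeping rather than mathematics: one must take care not to interchange the roles of $D\cap H$ and $DH$ (i.e.\ of ``inert'' and ``splits completely'') when reading off the group-theoretic conditions, and one should fix the precise sense in which $E$ is ``maximal'' — read as a statement about the ascending layers $K/F$, it is exactly the classical decomposition-field description, and nothing is needed beyond Galois theory and the cited decomposition theorem.
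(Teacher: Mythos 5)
Your argument is correct and follows essentially the same route as the paper's: identify the decomposition group \(D=G_{\mathfrak{P}}\) (cyclic, generated by Frobenius, of order \(f\)), observe that for \(F=K^{H}\) the decomposition group of \(\mathfrak{P}\) over \(F\) is \(D\cap H\), and conclude that \(\mathfrak{P}\cap F\) is inert in \(K/F\) if and only if \(H\subseteq D\), i.e.\ \(F\supseteq E\). Your handling of the maximality clause is in fact more careful than the paper's own proof, which writes ``forcing \(F\subset E\)'' where the group-theoretic condition \(H\subseteq D\) actually yields \(F\supseteq E\) --- your reading, that \(E\) is minimal among such fields so that \(K/E\) is the top layer of largest degree in which \(\mathfrak{p}\) stays prime (and \(\mathfrak{p}\) splits completely in \(E/k\)), is the correct interpretation of the statement.
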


\begin{proof}
Let \( G = \Gal(K/k) \). Since \( K/k \) is cyclic, \( G \) is cyclic. As \( \mathfrak{P} \) is unramified, the Frobenius element \( \left(\frac{K/k}{\mathfrak{P}}\right) \) generates the cyclic decomposition group \( G_{\mathfrak{P}} \), which has order \( f = f(\mathfrak{P}/\mathfrak{p}) \) (the residue degree).

Let \( E = K^{G_{\mathfrak{P}}} \) be the fixed field. Then \( [K:E] = |G_{\mathfrak{P}}| = f \). In the extension \( K/E \), the prime \( \mathfrak{q} = \mathfrak{P} \cap \OO_E \) has residue degree \( f(K/E) = [K:E] = f \), which means \( \mathfrak{q} \) is inert in \( K/E \).

Moreover, \( E \) is maximal with this property: if \( E \subset F \subset K \) and \( \mathfrak{p} \) is inert in \( K/F \), then the decomposition group of \( \mathfrak{P} \) over \( F \) would be all of \( \Gal(K/F) \), forcing \( F \subset E \).
\end{proof}
\begin{definition}\label{ass:main-assumptions}
    Let \( N \) be a real quadratic field, and let \( I \) be an ideal whose class in \( \operatorname{Cl}(N) \) has order dividing \( [M:N] = p^k \), where \( p \) is an arbitrary prime and \( k \ge 1 \).
    We say that \( M \) is a \emph{relatively \( I \)-proper extension} of \( N \) if the following conditions hold:
    \begin{enumerate}
        \item The ideal \( I \) is inert in the extension \( M/N \);
        \item There exists a cyclic extension \( L/N \) containing \( M \), i.e., \( N \subset M \subset L \);
        \item With \( G = \operatorname{Gal}(L/N) \) and \( H = \operatorname{Gal}(L/M) \), we have \( |H| \mid [G:H] \).
    \end{enumerate}
    Consequently, the tower \( N \subset M \subset L \) satisfies the hypotheses of Lemma~\ref{lem:number-field-application}.
\end{definition}

\begin{lemma}\label{lem:inert-in-tower}
    Let \( \mathfrak{q}_N \) be a prime ideal of \( N \) representing a generator of \( \operatorname{Cl}(N) \). 
    Assume that \( \mathfrak{q}_N \) is inert in the relatively proper extension \( M/N \). 
    Then the prime \( \mathfrak{q}_M \) of \( M \) lying above \( \mathfrak{q}_N \) remains inert in the extension \( L/M \).
\end{lemma}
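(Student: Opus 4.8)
The plan is to reduce everything to decomposition groups inside the Galois group of $L/N$. By Definition~\ref{ass:main-assumptions}, $L/N$ is cyclic of $p$-power degree, so $G := \Gal(L/N)$ is a cyclic $p$-group; set $H := \Gal(L/M)$, a proper subgroup of $G$ since $[M:N] = p^k \ge p$. (I would also dispose of the trivial case $L = M$ at the outset.) The only input beyond cyclicity is that $\mathfrak{q}_N$ be unramified in $L/N$, not merely in $M/N$: this is part of the standing hypothesis that the representative prime $\mathfrak{q}_N$ carries no ramified primes of the tower, and without it the conclusion would genuinely fail, since $\mathfrak{q}_M$ could then be ramified rather than inert in $L/M$. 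Fixing a prime $\mathfrak{Q}$ of $L$ above $\mathfrak{q}_N$, let $D \le G$ be its decomposition group; since $\mathfrak{q}_N$ is unramified, $D$ is cyclic, generated by the Frobenius at $\mathfrak{Q}$.

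Next I would translate the hypothesis that $\mathfrak{q}_N$ is inert in $M/N$ into the group-theoretic statement $DH = G$. The decomposition group of $\mathfrak{Q}\cap M$ over $N$, viewed inside $\Gal(M/N) = G/H$, is exactly the image $DH/H$ of $D$ under $G \twoheadrightarrow G/H$; the prime $\mathfrak{q}_N$ is inert in $M/N$ precisely when this image is all of $G/H$, i.e.\ when $DH = G$. Here I use that $\mathfrak{q}_N$ is unramified, so that residue degrees coincide with the orders of the relevant decomposition groups.

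The heart of the argument is then a one-line observation about cyclic $p$-groups: their subgroups form a chain under inclusion, so $DH = \langle D, H\rangle$ equals whichever of $D$, $H$ is larger; since $H \subsetneq G$, the equality $DH = G$ forces $D = G$. (One could instead route this through Lemma~\ref{lem:inert-subextension} applied to $L/N$ — the subfield where $\mathfrak{q}_N$ becomes inert must be $N$ itself once $\mathfrak{q}_N$ is already inert in the proper subextension $M/N$ — but the direct computation is cleaner.) Finally, with $D = G$ the decomposition group of $\mathfrak{Q}$ over $M$ is $D \cap H = H = \Gal(L/M)$, so $\mathfrak{q}_M$ has a unique prime of $L$ above it and, being unramified in $L/M$, has residue degree $[L:M]$; that is, $\mathfrak{q}_M$ is inert in $L/M$, as claimed. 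The main obstacle is not any hard estimate but simply making sure the ramification hypothesis on $\mathfrak{q}_N$ in the full extension $L/N$ is actually in force; once it is, the cyclic-group combinatorics are immediate, and note that condition~(3) of Definition~\ref{ass:main-assumptions} is not even needed.
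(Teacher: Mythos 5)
Your proof is correct and, at its core, follows the same route as the paper's: translate inertness into a statement about decomposition groups in the cyclic group $G=\Gal(L/N)$ and conclude from the structure of its subgroups. The difference is one of care rather than of strategy. The paper's proof is a two-line appeal to Lemma~\ref{lem:inert-subextension}, asserting that ``in a cyclic extension, if a prime is inert in any intermediate layer, then it is inert in the entire extension.'' Stated for arbitrary cyclic extensions this is false (take $G\cong\mathbb{Z}/6\mathbb{Z}$, $H\cong\mathbb{Z}/3\mathbb{Z}$, $D\cong\mathbb{Z}/2\mathbb{Z}$: then $DH=G$ so the prime is inert in the quadratic layer $M/N$, yet $D\cap H=1$ and the prime splits completely in $L/M$). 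What saves the lemma is exactly the point you isolate: here $G$ is a cyclic $p$-group, its subgroups form a chain, and $DH=G$ with $H\subsetneq G$ forces $D=G$, whence $D\cap H=H$ and $\mathfrak{q}_M$ is inert in $L/M$. You are also right to flag that one must know $\mathfrak{q}_N$ is unramified in all of $L/N$ (the paper arranges this by choosing representatives prime to the ramified primes, but does not say so in the lemma). One small quibble: your closing remark that condition~(3) of Definition~\ref{ass:main-assumptions} is ``not even needed'' is slightly overstated, since in that definition it is precisely the divisibility $|H|\mid[G:H]=p^k$ that guarantees $[L:M]$, and hence $[L:N]$, is a power of $p$; what is true is that only this $p$-power consequence, not the full divisibility, enters the argument.
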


\begin{proof}
Lemma~\ref{lem:inert-subextension} implies that in a cyclic extension, if a prime is inert in any intermediate layer, then it is inert in the entire extension.

Since \( \mathfrak{q}_N \) is inert in \(M/N\) by assumption, it follows that \( \mathfrak{q}_N \) is inert in the whole extension \(L/N\).  
\end{proof}

\begin{corollary}\label{cor:non-norm}
Assume \( N \subset M \subset L \), where \( L/N \) is a cyclic extension of degree \( p^{\ell} \).  
Let \( \mathfrak{q}_N \) be a prime ideal of \( \mathcal{O}_N \) representing a generator of \( \operatorname{Cl}(N) \) that is inert in \( M/N \). Then
\[
\mathfrak{q}_N \mathcal{O}_M \;\notin\; \mathcal{P}_M^{\mathfrak{m}} \cdot \operatorname{Norm}_{L/M}\!\bigl(I_L^{\mathfrak{m}}\bigr).
\]
\end{corollary}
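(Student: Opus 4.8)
The plan is to combine the tower-inertness of $\mathfrak{q}_N$ with the decomposition law of class field theory applied to the abelian extension $L/M$. Throughout I may assume $\mathfrak{q}_N$ is coprime to the (finite part of the) modulus $\mathfrak{m}$: one is free to choose the generating class of $\operatorname{Cl}(N)$ to be represented by a prime avoiding any prescribed finite set of primes, and in any case a prime inert in $M/N$ is unramified, hence coprime to the conductor. I also use, as in the intended application, that $M \subsetneq L$, so that $[L:M] > 1$.

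First I would push the inertness up the tower. Since $L/N$ is cyclic by hypothesis, Lemma~\ref{lem:inert-subextension} applies to $L/N$: a prime of $N$ that is inert in the intermediate layer $M/N$ is inert in the whole extension $L/N$. Hence the prime $\mathfrak{q}_M := \mathfrak{q}_N\mathcal{O}_M$ of $M$ has residue degree $[L:M]$ over $M$, i.e.\ $\mathfrak{q}_M$ is inert, and in particular does \emph{not} split completely, in $L/M$. This is precisely the content of Lemma~\ref{lem:inert-in-tower}.

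Next I would invoke Artin reciprocity for $L/M$ with modulus $\mathfrak{m}$: the Artin map induces an isomorphism
\[
I_M^{\mathfrak{m}}\big/\bigl(\mathcal{P}_M^{\mathfrak{m}}\cdot\Norm_{L/M}(I_L^{\mathfrak{m}})\bigr)\;\xrightarrow{\ \sim\ }\;\Gal(L/M),
\]
sending a prime coprime to $\mathfrak{m}$ to its Frobenius, and by the decomposition law \cite[VI, §7, Theorem~7.3]{Neukirch2013} an ideal lies in $\mathcal{P}_M^{\mathfrak{m}}\cdot\Norm_{L/M}(I_L^{\mathfrak{m}})$ exactly when it splits completely in $L/M$. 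Since $\mathfrak{q}_M$ is inert with $[L:M]>1$, its Frobenius generates a subgroup of order $[L:M]>1$ and is therefore nontrivial; hence $\mathfrak{q}_N\mathcal{O}_M = \mathfrak{q}_M \notin \mathcal{P}_M^{\mathfrak{m}}\cdot\Norm_{L/M}(I_L^{\mathfrak{m}})$, which is the assertion. The argument is short and presents no genuine obstacle; the only points requiring care are ensuring that it is $L/N$ (not merely $M/N$) that is cyclic, so that Lemma~\ref{lem:inert-subextension} is applicable, and keeping the bookkeeping of the modulus $\mathfrak{m}$ consistent so that the Artin reciprocity dictionary transfers verbatim to the quotient appearing in the statement.
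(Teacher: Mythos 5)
Your argument is exactly the paper's proof: Lemma~\ref{lem:inert-in-tower} gives that the prime above $\mathfrak{q}_N$ is inert in $L/M$, and the decomposition theorem \cite[VI, \S 7, Theorem~7.3]{Neukirch2013} then shows it cannot lie in $\mathcal{P}_M^{\mathfrak{m}}\cdot\operatorname{Norm}_{L/M}(I_L^{\mathfrak{m}})$, since membership there is equivalent to splitting completely. Your added care about $[L:M]>1$ and coprimality to the modulus is reasonable but does not change the route.
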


\begin{proof}
By the Decomposition Theorem \cite[VI, §7, Theorem 7.3]{Neukirch2013}, an unramified prime ideal of \( M \) splits totally in \( L/M \) if and only if it lies in \( \mathcal{P}_M^{\mathfrak{m}} \Norm_{L/M}(I_L^{\mathfrak{m}}) \). By Lemma \ref{lem:inert-in-tower}, the prime \( \mathfrak{q}_M \) above \( \mathfrak{q}_N \) is inert in \( L/M \). Therefore, \( \mathfrak{q}_M \notin \mathcal{P}_M^{\mathfrak{m}} \Norm_{L/M}(I_L^{\mathfrak{m}}) \).
\end{proof}

\begin{theorem}\label{thm:capitulation}
Let \( \mathfrak{q} \) be a prime ideal representing a class of order \( p^n \) in \( \operatorname{Cl}(N) \). If \( M \) is a relatively \( \mathfrak{q} \)-proper extension of \( N \) of degree \( p^n \), then \( \mathfrak{q} \) capitulates in \( M \).
\end{theorem}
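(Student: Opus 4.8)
The plan is to check that ``$M$ is a relatively $\mathfrak{q}$-proper extension of $N$ of degree $p^n$'' is exactly the input needed to run Lemma~\ref{lem:number-field-application} on the tower $N\subset M\subset L$, and then to read off the capitulation of $\mathfrak{q}$ from the triviality of the extension-of-ideals map $\varphi$ appearing there. In spirit this is a ramified analogue of the Artin--Furtw\"angler equivalence ``capitulation $\Leftrightarrow$ vanishing transfer,'' with Lemma~\ref{lem:number-field-application} serving as the Artin-reciprocity translation of Lemma~\ref{lem:vanishing-transfer}. First I would unwind Definition~\ref{ass:main-assumptions}: being relatively $\mathfrak{q}$-proper of degree $p^n$ supplies a cyclic $L/N$ with $N\subset M\subset L$ for which $|H|\mid[G:H]$ (equivalently $[L:M]\mid[M:N]=p^n$), where $G=\Gal(L/N)$ and $H=\Gal(L/M)$, and for which $\mathfrak{q}$ is inert in $M/N$. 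As $L/N$ is cyclic it is abelian, so this tower satisfies every hypothesis of Lemma~\ref{lem:number-field-application} with modulus $\mathfrak{m}=\mathcal{F}_{M/N}$; and since $\mathfrak{q}$ is inert, hence unramified, in $M/N$ while $\mathcal{F}_{M/N}$ is supported on the ramified primes, $\mathfrak{q}$ is coprime to $\mathfrak{m}$, so $\mathfrak{q}\in I_N^{\mathfrak{m}}$. Lemma~\ref{lem:number-field-application} then says $\varphi$ has trivial image, and evaluating at $\mathfrak{q}$ gives the key containment
\[
\mathfrak{q}\mathcal{O}_M\;\in\;\mathcal{P}_M^{\mathfrak{m}}\,\Norm_{L/M}\!\bigl(I_L^{\mathfrak{m}}\bigr).
\]

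Next I would convert this into $[\mathfrak{q}\mathcal{O}_M]=1$ in $\Cl(M)$. By Lemma~\ref{lem:inert-subextension} a prime inert in an intermediate layer of a cyclic extension is inert throughout it, so $\mathfrak{q}$ is inert in $L/N$, and by Lemma~\ref{lem:inert-in-tower} the prime $\mathfrak{q}\mathcal{O}_M$ of $M$ is inert in $L/M$; hence by the Decomposition Theorem \cite[VI, \S7, Theorem 7.3]{Neukirch2013} (compare Corollary~\ref{cor:non-norm}) it does not split completely in $L/M$ once $L\neq M$. Pushing the containment forward along $I_M^{\mathfrak{m}}\twoheadrightarrow\Cl(M)$ places $[\mathfrak{q}\mathcal{O}_M]$ inside the norm subgroup $\Norm_{L/M}(\Cl(L))$, so that its image in the quotient detecting the unramified part of $L/M$ is trivial, whereas inertness forces that same image to be a generator there. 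I would reconcile this with the numerical hypotheses $\operatorname{ord}([\mathfrak{q}])=p^n=[M:N]$: since $\Norm_{M/N}\circ\epsilon_{M/N}$ is multiplication by $p^n$ on $\Cl(N)$, the class $[\mathfrak{q}\mathcal{O}_M]=\epsilon_{M/N}([\mathfrak{q}])$ has order dividing $p^n$ and lies in $\ker\Norm_{M/N}$; together with the previous constraints this pins $[\mathfrak{q}\mathcal{O}_M]$ down to the identity, i.e.\ $\mathfrak{q}$ capitulates in $M$.

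The step I expect to be the real obstacle is this last conversion. The containment holds with $\mathfrak{m}=\mathcal{F}_{M/N}$, the conductor of the \emph{lower} extension, whereas the Decomposition Theorem for $L/M$ naturally wants a modulus divisible by $\mathcal{F}_{L/M}$, the conductor of the \emph{upper} extension, and these need not be comparable; so one must carefully track the comparison maps between the ray class groups of $M$ for the two moduli, verify that the norm subgroup appearing in Lemma~\ref{lem:number-field-application} really does surject onto $\Norm_{L/M}(\Cl(L))\subseteq\Cl(M)$, and only then combine inertness of $\mathfrak{q}\mathcal{O}_M$ in $L/M$ with the order hypotheses on $[\mathfrak{q}]$ to force the image class to be exactly $1$ rather than merely to lie in a proper subgroup. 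The remaining ingredients — unwinding the definition, applying Lemma~\ref{lem:number-field-application}, and propagating inertness up the cyclic tower via Lemmas~\ref{lem:inert-subextension} and~\ref{lem:inert-in-tower} and Corollary~\ref{cor:non-norm} — should be routine given the machinery already in place in this section.
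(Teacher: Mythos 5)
Your route is the same as the paper's: unwind Definition~\ref{ass:main-assumptions} to obtain a tower $N\subset M\subset L$ meeting the hypotheses of Lemma~\ref{lem:number-field-application}, propagate inertness up the tower via Lemmas~\ref{lem:inert-subextension} and~\ref{lem:inert-in-tower} and Corollary~\ref{cor:non-norm}, and read off capitulation from the triviality of the image of $\varphi$. The paper's proof is exactly this chain of citations, compressed into a few lines.

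However, the obstacle you flag at the end is a genuine gap, and it cannot be closed by the order and norm-kernel considerations you sketch. Lemma~\ref{lem:number-field-application} gives $\mathfrak{q}\mathcal{O}_M\in\mathcal{P}_M^{\mathfrak{m}}\Norm_{L/M}(I_L^{\mathfrak{m}})$, while Corollary~\ref{cor:non-norm}, which you also invoke, asserts $\mathfrak{q}\mathcal{O}_M\notin\mathcal{P}_M^{\mathfrak{m}}\Norm_{L/M}(I_L^{\mathfrak{m}})$; as stated these are contradictory, so no downstream reasoning from both can produce a valid argument until one of them is reformulated (for instance by tracking the two different moduli, as you suspect). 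Even granting only the containment, membership in $\mathcal{P}_M^{\mathfrak{m}}\Norm_{L/M}(I_L^{\mathfrak{m}})$ does not imply principality of $\mathfrak{q}\mathcal{O}_M$ unless one shows that $\Norm_{L/M}(I_L^{\mathfrak{m}})$ lands in the principal classes of $\Cl(M)$ --- the ingredient that makes the classical Principal Ideal Theorem work when $L$ is the Hilbert class field of $M$, and which is not available here. Your fallback (that $[\mathfrak{q}\mathcal{O}_M]$ has order dividing $p^n$ and lies in $\ker\Norm_{M/N}$) does not pin the class to the identity, since $\ker\Norm_{M/N}$ may well contain nontrivial elements of order dividing $p^n$. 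Be aware that the paper's own proof does not resolve this either: it asserts both the vanishing of $\varphi$ and the nontriviality of $[\mathfrak{q}\mathcal{O}_M]$ in the same quotient and then concludes capitulation without reconciling the two. So your proposal faithfully reproduces the paper's argument, including its unresolved step; identifying that step explicitly is the most valuable part of your write-up.
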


\begin{proof}
Since \( M/N \) is relatively \(\mathfrak{q}_1\)-proper, the prime \(\mathfrak{q}_1\) is inert in \( M/N \).
Moreover, \( \mathfrak{q} \) has order \( p^n \) in \( \operatorname{Cl}(N) \) and \( \mathfrak{q}\mathcal{O}_M \) remains inert in both \( M/N \) and \( L/M \) by the decomposition theorem \cite[VI, §7, Theorem 7.3]{Neukirch2013}, the class \( [\mathfrak{q}] \) is non-trivial in the quotient
\[
\frac{\mathcal{I}_N^{\mathfrak{m}}}
{\mathcal{P}_N^{\mathfrak{m}} \cdot \operatorname{Norm}_{M/N}(I_M^{\mathfrak{m}})} .
\]

Moreover, Corollary~\ref{cor:non-norm} shows that \( [\mathfrak{q}\mathcal{O}_M] \) is non-trivial in the quotient
\[
\frac{\mathcal{I}_M^{\mathfrak{m}}}
{\mathcal{P}_M^{\mathfrak{m}} \cdot \operatorname{Norm}_{L/M}(I_L^{\mathfrak{m}})} .
\]

Finally, by Lemma~\ref{lem:number-field-application}, this implies that the class \( [\mathfrak{q}] \) capitulates in \( \operatorname{Cl}_{\mathfrak{m}}(M) \).
\end{proof}

This inspires the following definition:

\begin{definition}\label{def:Cl_M}
Let \( N \) be a real quadratic field and \( M \) a Galois extension of \( N \) of degree \( p^n \). Suppose \( \mathfrak{a} \) is an ideal of \( \mathcal{O}_N \) whose class has order \( p^n \) in \( \operatorname{Cl}(N) \), and that \( M/N \) is a relatively \( \mathfrak{a} \)-proper extension of degree \( p^n \). 

We define \( \operatorname{Cl}_M(N) \) as the subset of \( \operatorname{Cl}(N) \) consisting of ideal classes \( [\mathfrak{b}] \) that capitulate in \( \operatorname{Cl}(M) \).
\end{definition}
\begin{remark}\label{rem:}
Let \( \mathfrak{q}_1, \dots, \mathfrak{q}_s \) be prime ideals in \( N \) whose classes in \( \operatorname{Cl}(N) \) have order \( p^n \). 
Assume that \( M/N \) is a cyclic extension of degree \( p^d \) which is relatively \(\mathfrak{q}_1\)-proper, with \( n \leq d \). Then, we will show that, only the prime \(\mathfrak{q}_1\) capitulates in \( \operatorname{Cl}(M) \).

\end{remark}


\begin{remark}\label{rem:coprime-order-splits}
If \( \mathfrak{q} \) is a prime in \( \Cl(N) \) whose "\textit{order}" is relatively prime to \( |\Gal(M/N)| \), then the Artin symbol \( \left(\frac{M/N}{\mathfrak{q}}\right) \) must be trivial. By class field theory, this implies that \( \mathfrak{q} \) splits completely in \( M/N \).
\end{remark}




\section{Ramified Part of the Relative Pólya Group}

In Section~2 we recalled the definition of the relative Pólya group. 
In what follows, we will aim to replace \(\Po(M/N)\) by the simpler group \(\Po(M)\), which allows us to work entirely with first cohomology.

In the next proposition we show that, for a suitably chosen extension \( M \), the ramified part \( \operatorname{Po}(M/N)_{\mathrm{ram}} \) is isomorphic to \( \operatorname{Po}(M) \) up to a factor which is a power of~\(2\). We then recall the notion of a \emph{proper} extension.

\begin{proposition}\label{prop:ramified-exact-sequence}
Let \( N \) be a real quadratic field and let \( M \) be a Galois extension of both \( N \) and \( \mathbb{Q} \). Assume that 
\[
\operatorname{Norm}_{N/\mathbb{Q}}\!\bigl(\mathfrak{d}(M/N)\bigr)
\]
is inert in \( N \). Then the following sequence is exact:
\begin{equation}\label{eq:ramified-exact-seq0}
1 \longrightarrow \operatorname{Po}(N) \longrightarrow \operatorname{Po}(M) \longrightarrow \operatorname{Po}(M/N)_{\mathrm{ram}} \longrightarrow 1 .
\end{equation}
\end{proposition}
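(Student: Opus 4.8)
The plan is to make all three groups in the sequence completely explicit as subgroups of $\Cl(M)$ given by generators; once this is done, the two maps and the exactness become almost formal, and the only genuine inputs are a short ramification analysis and one norm computation. For the ramification, note that the hypothesis that $\Norm_{N/\mathbb{Q}}(\mathfrak{d}(M/N))$ is inert in $N$ forces $\mathfrak{d}(M/N)$ to be a power of a single prime $\mathfrak{q}=q\OO_N$ of $N$, where $q$ is a rational prime inert in $N/\mathbb{Q}$; in particular $q\nmid\disc N$. By the tower formula for discriminants, $\disc(M/\mathbb{Q})$ is supported on the disjoint union $\{p:p\mid\disc N\}\sqcup\{q\}$, and (using that $M/\mathbb{Q}$ is Galois, so all primes of $M$ over a fixed rational prime share one ramification index and one residue degree) for $p\mid\disc N$ the unique prime $\mathfrak{p}\mid p$ of $N$ satisfies $p\OO_N=\mathfrak{p}^{2}$, is unramified in $M/N$, and has $e_p(M/\mathbb{Q})=2$, while $\mathfrak{q}$ is the only prime of $N$ ramifying in $M/N$, with ramification index $e_\mathfrak{q}>1$.

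Next I would describe the three Pólya groups. By Zantema's sequence (Lemma~\ref{lem:absolute-polya-exact}) for $M/\mathbb{Q}$, the group $\Po(M)$ is generated by the classes $\bigl[\prod_{\mathfrak{M}\mid p}\mathfrak{M}\bigr]$ with $p$ ramified in $M/\mathbb{Q}$ (these are the generators $\Pi_{p^{f}}(M/\mathbb{Q})$ of Section~2, $f$ the common residue degree; unramified $p$ contribute the principal ideal $p\OO_M$). By the previous paragraph, $\prod_{\mathfrak{M}\mid p}\mathfrak{M}=\mathfrak{p}\OO_M$ for $p\mid\disc N$, while $\prod_{\mathfrak{M}\mid q}\mathfrak{M}=\prod_{\mathfrak{M}\mid\mathfrak{q}}\mathfrak{M}=:\Pi_\mathfrak{q}$, which is exactly the relative generator $\Pi_{\mathfrak{q}^{f_\mathfrak{q}}}(M/N)$. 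Hence, inside $\Cl(M)$,
\[
\Po(M)=A\cdot B,\qquad A:=\bigl\langle\,[\mathfrak{p}\OO_M]:p\mid\disc N\,\bigr\rangle,\qquad B:=\bigl\langle\,[\Pi_\mathfrak{q}]\,\bigr\rangle .
\]
The same sequence for $N/\mathbb{Q}$ gives $\Po(N)=\langle[\mathfrak{p}]:p\mid\disc N\rangle$, which is killed by $2$ since $\mathfrak{p}^{2}=(p)$. Finally, as $\mathfrak{q}$ is the only prime of $N$ ramifying in $M/N$, the definition of the ramified part gives $\Po(M/N)_{\mathrm{ram}}=\langle[\Pi_\mathfrak{q}]\rangle=B$.

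Now for the maps. Let $\iota\colon\Po(N)\to\Po(M)$ be extension of ideals, $[\mathfrak{a}]\mapsto[\mathfrak{a}\OO_M]$, which by the above lands in $\Po(M)$ with image $A$. Since $\Norm_{M/N}(\mathfrak{a}\OO_M)=\mathfrak{a}^{[M:N]}$, the composite $\Norm_{M/N}\circ\iota$ is multiplication by $[M:N]$ on the $2$-group $\Po(N)$; because $[M:N]$ is odd (in the intended applications $[M:N]=p^{n}$ with $p$ odd), this composite is an automorphism, so $\iota$ is injective and $\Norm_{M/N}$ restricts to an isomorphism $A\xrightarrow{\ \sim\ }\Po(N)$. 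On the other hand $\Norm_{M/N}[\Pi_\mathfrak{q}]=[\mathfrak{q}^{\,[M:N]/e_\mathfrak{q}}]=1$, because $\mathfrak{q}=q\OO_N$ is principal, so $B\subseteq\ker\Norm_{M/N}$. Therefore $A\cap B\subseteq\ker(\Norm_{M/N}|_A)=\{1\}$, whence $\Po(M)=A\oplus B$. Taking $\rho\colon\Po(M)\to\Po(M/N)_{\mathrm{ram}}$ to be the projection onto the factor $B=\Po(M/N)_{\mathrm{ram}}$, we find $\rho$ surjective with $\ker\rho=A=\operatorname{im}\iota$, while $\iota$ is injective; this is precisely the asserted exact sequence.

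The real content sits in two places. The first is the ramification analysis: the hypothesis on $\mathfrak{d}(M/N)$ is tailored exactly so that $M/\mathbb{Q}$ picks up only one ``extra'' ramified rational prime, inert in $N$ and disjoint from $\disc N$, which is what keeps the generating set of $\Po(M)$ small enough to exhibit the splitting $\Po(M)=A\oplus B$. The second, and the one I expect to be the subtle point, is the injectivity of $\iota$: a priori this is a capitulation statement, and it is rescued here only by the coprimality $\gcd(|\Po(N)|,[M:N])=1$, which holds because $\Po(N)$ is $2$-torsion and $[M:N]$ is odd. I would also verify that no even-degree case is needed for the applications; if it were, injectivity of $\iota$ would demand a genuine capitulation argument and the proof would be considerably harder.
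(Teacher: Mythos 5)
Your proof follows the same decomposition as the paper's: both split $\Po(M)$ into the subgroup generated by extensions of the primes ramified in $N/\mathbb{Q}$ (your $A$, the paper's $\Po'(M)$, identified with $\Po(N)$ via the norm map one way and extension of ideals the other) and the subgroup generated by the $\Pi_{\mathfrak{q}}$ for primes ramified in $M/N$ (your $B$, identified with $\Po(M/N)_{\mathrm{ram}}$). Where you go beyond the paper is in actually justifying the two points it merely asserts: that extension of ideals $\Po(N)\to\Po(M)$ is injective, and that the two subgroups meet trivially so the sequence splits --- your observation that $\Norm_{M/N}\circ\iota$ is multiplication by $[M:N]$ on the $2$-torsion group $\Po(N)$, hence an automorphism when $[M:N]$ is odd, supplies both at once. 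Your closing caveat is also well taken and is a genuine issue for the paper, not just for your write-up: the proposition is stated with no parity hypothesis on $[M:N]$, and it is later invoked (via Lemma~\ref{lem:polya-ram-size-H1} and Lemma~\ref{lem:cl-quotient-formula}) in the proof of Theorem~\ref{thm:even-class-number}, where $[M:N]=2^{n}$; in that case the coprimality argument fails and the injectivity of $\iota$ would indeed require a separate capitulation argument that neither you nor the paper provides.
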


\begin{proof}
The groups \( \Po(M) \) and \( \Po(M/N)_{\text{ram}} \) correspond to ramified primes in \( M/\mathbb{Q} \) and \( M/N \), respectively.

\begin{itemize}
\item Primes that ramify only in \( M/N \) contribute identical terms to the subgroups generated by \(\Pi_p(M)\) and \(\Pi_p(M/N)\); by the assumption, only primes inert in \( \mathcal{O}_N \) can ramify in \( M/N \).

\item Primes that ramify in \( M/\mathbb{Q} \) but not in \( M/N \) generate a subgroup \( \Po'(M) \subset \Po(M) \). Consider a prime \( p \) such that \( p\mathcal{O}_N = \mathfrak{p}^2 \) (i.e., \( p \) is ramified in \( N/\mathbb{Q} \)) and assume that \( \mathfrak{p} \) is either inert or splits in \( M/N \). Then \( \Pi_p(M) = \mathfrak{p}^2 \mathcal{O}_M \). This yields an injective homomorphism
\[
\Po'(M) \hookrightarrow \Po(N), \tag{6}
\]
induced by the norm map \( \Norm_{M/N}\colon \Pi_{\mathfrak{p}}(M) \to \Pi_p(N) \). Conversely, there is an injective homomorphism
\[
\Po(N) \hookrightarrow \Po'(M), \tag{7}
\]
sending \( \mathfrak{p} \in \Po(N) \) to \( \mathfrak{p} \mathcal{O}_M \). Consequently, \( \Po(N) \cong \Po'(M) \).
\end{itemize}
This yields the following exact sequence, which splits:

\begin{equation}\label{eq:ramified-exact-seq}
1 \longrightarrow \operatorname{Po}(N) \longrightarrow \operatorname{Po}(M) \longrightarrow \operatorname{Po}(M/N)_{\mathrm{ram}} \longrightarrow 1 .
\end{equation}
\end{proof}

\begin{corollary}\label{cor:polya-quotient-power-two}
The quotient
\[
\frac{|\Po(M)|}{|\Po(M/N)_{\text{ram}}|} = |\Po(N)|
\]
is a power of two.
\end{corollary}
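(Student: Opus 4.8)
The plan is to separate the statement into two independent parts. First, the equality
\[
\frac{|\Po(M)|}{|\Po(M/N)_{\mathrm{ram}}|} = |\Po(N)|
\]
is an immediate consequence of Proposition~\ref{prop:ramified-exact-sequence}: that proposition furnishes a short exact sequence
\[
1 \longrightarrow \operatorname{Po}(N) \longrightarrow \operatorname{Po}(M) \longrightarrow \operatorname{Po}(M/N)_{\mathrm{ram}} \longrightarrow 1,
\]
so taking cardinalities gives $|\Po(M)| = |\Po(N)| \cdot |\Po(M/N)_{\mathrm{ram}}|$, and dividing through yields the asserted identity. The splitting noted in the proof of that proposition is not even required here; exactness alone suffices for the order count.

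Second, I would show that $|\Po(N)|$ is a power of $2$ by applying Lemma~\ref{lem:absolute-polya-exact} to the extension $E = N$. Since $N$ is quadratic, $G = \Gal(N/\mathbb{Q}) \cong \mathbb{Z}/2\mathbb{Z}$, and every rational prime $p$ has ramification index $e_p \in \{1,2\}$ in $N$. Consequently the middle term $\bigoplus_{p} \mathbb{Z}/e_p\mathbb{Z}$ of the exact sequence in Lemma~\ref{lem:absolute-polya-exact} is a finite elementary abelian $2$-group: the summands with $e_p = 1$ vanish, only finitely many primes ramify, and each remaining summand is $\mathbb{Z}/2\mathbb{Z}$. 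Since $\Po(N) = \Po(E)$ is a quotient of this group, it too is a finite elementary abelian $2$-group, and in particular $|\Po(N)| = 2^r$ for some integer $r \ge 0$.

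There is essentially no obstacle: the first part is a one-line order count, and the second is a direct application of an already-established exact sequence together with the elementary fact that quadratic extensions have ramification indices at most~$2$. The only point that needs a little care is to invoke the \emph{absolute} version (Lemma~\ref{lem:absolute-polya-exact}) with base field $\mathbb{Q}$ and top field $N$, so that $\Po(N) = \Po(N/\mathbb{Q})$ appears as the relevant quotient; as an alternative route one may argue directly that $\Po(N)$ is generated by the classes of the ramified primes $\mathfrak{p}$ of $N$, each satisfying $\mathfrak{p}^2 = p\OO_N$ principal, and thereby reach the same conclusion that $\Po(N)$ is elementary abelian of exponent dividing~$2$.
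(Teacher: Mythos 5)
Your proposal is correct and follows essentially the same route as the paper: the equality comes from taking cardinalities in the short exact sequence of Proposition~\ref{prop:ramified-exact-sequence}, and the power-of-two claim comes from Lemma~\ref{lem:absolute-polya-exact} applied to $E=N$, where the middle term $\bigoplus_p \mathbb{Z}/e_p\mathbb{Z}$ is an elementary abelian $2$-group because $N/\mathbb{Q}$ is quadratic. Your write-up is in fact more explicit than the paper's one-line proof, but the underlying argument is identical.
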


\begin{proof}
This follows from Proposition \ref{prop:ramified-exact-sequence} and the exact sequence:
\[
1 \to H^1(G, \UU_N) \to \bigoplus_{p \mid \disc(N/\mathbb{Q})} \mathbb{Z}/e_p\mathbb{Z} \to \Po(N) \to 1,
\]
where \( |\Po(N)| \) is a power of two.
\end{proof}

\begin{definition}\label{def:proper-field}
Let \( N \) be a number field and let \( I \) be an ideal of \( \mathcal{O}_N \). 
We say that an extension \( M/N \) is \emph{proper for \( I \)} if the following conditions hold:
\begin{enumerate}[label=(\alph*)]
    \item \( M/\mathbb{Q} \) is Galois.
    \item \( M/N \) is relatively \( I \)-proper (in the sense of Definition~\ref{ass:main-assumptions}).
    \item The norm \( \operatorname{Norm}_{N/\mathbb{Q}}(\mathfrak{d}(M/N)) \) is inert in \( N \).
\end{enumerate}
\end{definition}


In the next section, we show how to eliminate the factor \( \operatorname{Po}(M/N) \) and the relative ramification term in the formula
\begin{equation}\label{eq:cardinality-relation-whole}
|\Cl(N)| \cdot \biggl| \bigoplus_{\mathfrak{p} \mid \mathfrak{d}(M/N)} \mathbb{Z}/e_{\mathfrak{p}}\mathbb{Z} \biggr| = 
\bigl| H^1(\operatorname{Gal}(M/N), \mathcal{O}_M^{\times}) \bigr| \cdot |\operatorname{Po}(M/N)|,
\end{equation}
when \( M/N \) is a proper extension.

\section{Removing \( \operatorname{Po}(M/N) \) from Relation~\eqref{eq:cardinality-relation-whole}}

In this section we combine the results of the preceding sections to obtain a new cardinality formula.

Let \( N \) be a quadratic field such that \(\Cl(N)\) contains \(k\) elements of order \(p^n\). Let \(\mathfrak{q}_1,\dots,\mathfrak{q}_s\) be prime ideals of \(\mathcal{O}_N\) whose classes in \(\operatorname{Cl}(N)\) have order \(p^n\). 
Assume that \( M/N \) is a cyclic extension of degree \( p^n \) that is proper for the ideal \(\mathfrak{q}_1\). Then the following equality holds modulo factors coprime to \( p \):
\begin{multline}\label{eq:new-cardinality}
|\Cl(N)| \cdot \biggl| \bigoplus_{\mathfrak{p} \mid \mathfrak{d}(M/N)} \mathbb{Z}/e_{\mathfrak{p}}\mathbb{Z} \biggr| \\
   =_{p^n} \bigl| H^1(\operatorname{Gal}(M/N), \mathcal{O}_M^{\times}) \bigr|
     \cdot |\operatorname{Po}(M/N)_{\mathrm{un}}| \cdot |\operatorname{Po}(M/N)_{\mathrm{ram}}|,
\end{multline}
where the notation \( =_{p^n} \) means that the two sides are equal when only their \( p^n \)-parts are compared.

Rearranging terms yields
\begin{multline}\label{eq:cl-m-n-quotient}
|\Cl(N)| =_{p^n} 
\frac{\bigl| H^1(\operatorname{Gal}(M/N), \mathcal{O}_M^{\times}) \bigr| 
      \cdot |\operatorname{Po}(M/N)_{\mathrm{ram}}|
      \cdot |\operatorname{Po}(M/N)_{\mathrm{un}}|}
     {\displaystyle\prod_{\mathfrak{p} \mid \mathfrak{d}(M/N)} e_{\mathfrak{p}}},
\end{multline}
where \( e_{\mathfrak{p}} \) denotes the ramification index of \( \mathfrak{p} \) in \( M/N \).

\begin{lemma}\label{lem:polya-ram-size-H1}
Under the assumptions above, let \( M/N \) be a proper extension for the ideal \( \mathfrak{q}_1 \). Then
\begin{equation}\label{eq:polya-ram-size-H1}
\frac{\bigl|\operatorname{Po}(M/N)_{\mathrm{ram}}\bigr|}
{\displaystyle\prod_{\mathfrak{p} \mid \mathfrak{d}(M/N)} e_{\mathfrak{p}}}
\;=\;
\frac{\bigl|H^1(\operatorname{Gal}(N/\mathbb{Q}), \mathcal{O}_N^{\times})\bigr|}
{\bigl|H^1(\operatorname{Gal}(M/\mathbb{Q}), \mathcal{O}_M^{\times})\bigr|}.
\end{equation}
\end{lemma}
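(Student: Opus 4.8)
The plan is to play the absolute Pólya exact sequence of Lemma~\ref{lem:absolute-polya-exact} for $M/\mathbb{Q}$ against the one for $N/\mathbb{Q}$, feed in the split exact sequence of Proposition~\ref{prop:ramified-exact-sequence}, and then convert the resulting product of ramification indices over rational primes into a product over the primes of $N$ dividing $\mathfrak{d}(M/N)$.

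First, I would apply Lemma~\ref{lem:absolute-polya-exact} to $E=M$ and to $E=N$. Writing $e_p(M)$, $e_p(N)$ for the ramification index of a rational prime $p$ in $M/\mathbb{Q}$ and $N/\mathbb{Q}$, respectively (well defined since both extensions are Galois, and note that all the $H^1$ groups here are finite, being subquotients of the finite groups $\bigoplus_p \mathbb{Z}/e_p\mathbb{Z}$), the two sequences give
\[
|\Po(M)| = \frac{\prod_p e_p(M)}{\bigl|H^1(\Gal(M/\mathbb{Q}),\mathcal{O}_M^{\times})\bigr|},
\qquad
|\Po(N)| = \frac{\prod_p e_p(N)}{\bigl|H^1(\Gal(N/\mathbb{Q}),\mathcal{O}_N^{\times})\bigr|}.
\]
Because $M/N$ is proper for $\mathfrak{q}_1$, the norm $\Norm_{N/\mathbb{Q}}(\mathfrak{d}(M/N))$ is inert in $N$ and $M/\mathbb{Q}$ is Galois, so Proposition~\ref{prop:ramified-exact-sequence} applies and gives $|\Po(M)| = |\Po(N)|\cdot|\Po(M/N)_{\mathrm{ram}}|$. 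Dividing the two displayed identities yields
\[
|\Po(M/N)_{\mathrm{ram}}|
= \Bigl(\prod_p \tfrac{e_p(M)}{e_p(N)}\Bigr)
\cdot
\frac{\bigl|H^1(\Gal(N/\mathbb{Q}),\mathcal{O}_N^{\times})\bigr|}{\bigl|H^1(\Gal(M/\mathbb{Q}),\mathcal{O}_M^{\times})\bigr|}.
\]

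Second, I would identify $\prod_p e_p(M)/e_p(N)$ with $\prod_{\mathfrak{p}\mid\mathfrak{d}(M/N)} e_{\mathfrak{p}}$. Since $M/\mathbb{Q}$ and $N/\mathbb{Q}$ are Galois, for each rational prime $p$ all primes of $N$ above $p$ are conjugate with the same ramification index in $M/N$, and the tower law gives $e_p(M)=e_p(N)\,e_{\mathfrak{p}}(M/N)$ for any $\mathfrak{p}\mid p$; in particular $e_p(M)/e_p(N)=1$ unless some $\mathfrak{p}\mid p$ lies in $\mathfrak{d}(M/N)$. The properness hypothesis forces every ramified $\mathfrak{p}$ to lie over a rational prime that is inert in $N$: otherwise $\Norm_{N/\mathbb{Q}}(\mathfrak{p})$ would be a rational prime power dividing $\Norm_{N/\mathbb{Q}}(\mathfrak{d}(M/N))$ whose underlying rational prime is split or ramified in $N$, contradicting inertness. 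Hence such a $\mathfrak{p}$ is the unique prime of $N$ above its rational prime, and $p\mapsto\mathfrak{p}$ is a bijection between the rational primes with $e_p(M)/e_p(N)>1$ and the primes $\mathfrak{p}\mid\mathfrak{d}(M/N)$, under which $e_p(M)/e_p(N)=e_{\mathfrak{p}}(M/N)$. Therefore $\prod_p e_p(M)/e_p(N)=\prod_{\mathfrak{p}\mid\mathfrak{d}(M/N)} e_{\mathfrak{p}}$; substituting this into the previous display and dividing through by $\prod_{\mathfrak{p}\mid\mathfrak{d}(M/N)} e_{\mathfrak{p}}$ gives the asserted identity.

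The main obstacle is precisely this ramification bookkeeping: one has to verify carefully that the inertness of $\Norm_{N/\mathbb{Q}}(\mathfrak{d}(M/N))$ in $N$ genuinely prevents a rational prime that splits or ramifies in $N/\mathbb{Q}$ from ramifying further in $M/N$, so that the correspondence $p\leftrightarrow\mathfrak{p}$ is exact and the local ramification indices match up on the nose. Everything else is a formal combination of the two instances of Lemma~\ref{lem:absolute-polya-exact} with Proposition~\ref{prop:ramified-exact-sequence}.
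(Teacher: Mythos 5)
Your proposal is correct and follows essentially the same route as the paper: combine the split exact sequence of Proposition~\ref{prop:ramified-exact-sequence} with the two instances of the absolute P\'olya sequence (Lemma~\ref{lem:absolute-polya-exact}) for $M/\mathbb{Q}$ and $N/\mathbb{Q}$, then cancel the ramification contributions. Your second step, matching $\prod_p e_p(M)/e_p(N)$ with $\prod_{\mathfrak{p}\mid\mathfrak{d}(M/N)} e_{\mathfrak{p}}$ via the tower law and the inertness of $\Norm_{N/\mathbb{Q}}(\mathfrak{d}(M/N))$, spells out carefully the quotient identity that the paper only asserts, which is a welcome addition rather than a deviation.
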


\begin{proof}
Proposition~\ref{prop:ramified-exact-sequence} gives the identity
\begin{equation}\label{eq:polya-ram-size}
\bigl|\operatorname{Po}(M/N)_{\mathrm{ram}}\bigr| = \frac{|\operatorname{Po}(M)|}{|\operatorname{Po}(N)|}.
\end{equation}
Recall that \( \operatorname{Po}(N) \) is a power of \(2\) because \( N \) is a real quadratic field. For the extension \( M/\mathbb{Q} \) we have the exact sequence
\[
1 \longrightarrow H^1\!\bigl(\operatorname{Gal}(M/\mathbb{Q}), \mathcal{O}_M^{\times}\bigr) \longrightarrow 
\bigoplus_{p \mid \mathfrak{d}(M/\mathbb{Q})} \mathbb{Z}/e_p\mathbb{Z} \longrightarrow \operatorname{Po}(M) \longrightarrow 1,
\]
and therefore
\[
|\operatorname{Po}(M)| = 
\frac{\displaystyle\Bigl|\bigoplus_{p \mid \mathfrak{d}(M/\mathbb{Q})} \mathbb{Z}/e_p\mathbb{Z}\Bigr|}
{\bigl|H^1(\operatorname{Gal}(M/\mathbb{Q}), \mathcal{O}_M^{\times})\bigr|},
\]
Similarly, 
\[
|\operatorname{Po}(N)| = 
\frac{\displaystyle\Bigl|\bigoplus_{p \mid \mathfrak{d}(N/\mathbb{Q})} \mathbb{Z}/e_p\mathbb{Z}\Bigr|}
{\bigl|H^1(\operatorname{Gal}(N/\mathbb{Q}), \mathcal{O}_N^{\times})\bigr|},
\]
Since \( N/\mathbb{Q} \) is quadratic,$M$ is Galois over $N$, Consequently,
\[
\Biggl|
\frac{\displaystyle\bigoplus_{p \mid \mathfrak{d}(M/\mathbb{Q})} \mathbb{Z}/e_p\mathbb{Z}}
{\displaystyle\bigoplus_{\mathfrak{p} \mid \mathfrak{d}(M/N)} \mathbb{Z}/e_{\mathfrak{p}}\mathbb{Z}}
\Biggr|
\;=\;
\Bigl| \bigoplus_{p \mid \mathfrak{d}(N/\mathbb{Q})} \mathbb{Z}/e_p\mathbb{Z} \Bigr|.
\]

Now substitute the expressions for \( |\operatorname{Po}(M)| \) and \( |\operatorname{Po}(N)| \) into \eqref{eq:polya-ram-size}. After canceling the common factor
\[
\Bigl| \bigoplus_{p \mid \mathfrak{d}(N/\mathbb{Q})} \mathbb{Z}/e_p\mathbb{Z} \Bigr|,
\]
 we obtain precisely \eqref{eq:polya-ram-size-H1}.
\end{proof}

Lemma~\ref{lem:polya-ram-size-H1} together with equation~\eqref{eq:cl-m-n-quotient} yields the following formula:
\begin{lemma}\label{lem:cl-quotient-formula}
Let \( \mathfrak{q}_1, \dots, \mathfrak{q}_k \) be unramified prime ideals of \( \mathcal{O}_N \) whose classes \( [\mathfrak{q}_i] \) have order \( p^n \) in \( \operatorname{Cl}(N) \). 
If \( M/N \) is a \( \mathfrak{q}_1 \)-proper extension of degree \( p^n \), then
\begin{multline}\label{eq:cl-m-n-quotient-simplified1}
|\Cl(N)| \\
=_{p^n} \frac{\bigl| H^1(\operatorname{Gal}(M/N), \mathcal{O}_M^{\times}) \bigr| 
            \cdot \bigl|H^1(\operatorname{Gal}(N/\mathbb{Q}), \mathcal{O}_N^{\times})\bigr| 
            \cdot |\operatorname{Po}(M/N)_{\mathrm{un}}|}
           {\bigl|H^1(\operatorname{Gal}(M/\mathbb{Q}), \mathcal{O}_M^{\times})\bigr| }.
\end{multline}
\end{lemma}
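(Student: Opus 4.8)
The plan is to derive \eqref{eq:cl-m-n-quotient-simplified1} by a single substitution of Lemma~\ref{lem:polya-ram-size-H1} into the already-established relation~\eqref{eq:cl-m-n-quotient}. Recall that, under the hypotheses that the $[\mathfrak{q}_i]$ have order $p^n$ in $\Cl(N)$ and that $M/N$ is a $\mathfrak{q}_1$-proper extension of degree $p^n$, equation~\eqref{eq:cl-m-n-quotient} reads
\[
|\Cl(N)| =_{p^n}
\frac{\bigl|H^1(\Gal(M/N),\OO_M^{\times})\bigr| \cdot |\Po(M/N)_{\mathrm{ram}}| \cdot |\Po(M/N)_{\mathrm{un}}|}
{\prod_{\mathfrak{p}\mid\mathfrak{d}(M/N)} e_{\mathfrak{p}}} .
\]
These are exactly the hypotheses under which Lemma~\ref{lem:polya-ram-size-H1} was proved, so the first step is merely to observe that the same extension $M/N$ satisfies both statements and no additional verification is required.

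The second and final step is the substitution itself. I would isolate the sub-factor $|\Po(M/N)_{\mathrm{ram}}| \big/ \prod_{\mathfrak{p}\mid\mathfrak{d}(M/N)} e_{\mathfrak{p}}$ occurring on the right-hand side of the displayed relation and replace it, using \eqref{eq:polya-ram-size-H1}, by $|H^1(\Gal(N/\Q),\OO_N^{\times})| \big/ |H^1(\Gal(M/\Q),\OO_M^{\times})|$, leaving the factors $|H^1(\Gal(M/N),\OO_M^{\times})|$ and $|\Po(M/N)_{\mathrm{un}}|$ untouched. Collecting numerator and denominator then produces \eqref{eq:cl-m-n-quotient-simplified1} verbatim.

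The only delicate point — and the nearest thing to an obstacle — is the interaction with the symbol $=_{p^n}$. Since Lemma~\ref{lem:polya-ram-size-H1} is an \emph{exact} equality of positive rationals, substituting it for a sub-factor of the right-hand side of \eqref{eq:cl-m-n-quotient} does not disturb the comparison of $p^n$-parts; formally, if $a =_{p^n} b$ and $c = c'$ as rational numbers then $ac =_{p^n} bc'$, applied with $c$ the ramified ratio and $c'$ its cohomological expression. I do not expect any genuine difficulty here: the substantive content is already carried by Proposition~\ref{prop:ramified-exact-sequence} and Lemma~\ref{lem:polya-ram-size-H1}, which compare the ramification data of $M/\Q$ with that of $M/N$, so the present lemma is a bookkeeping consequence of combining them with \eqref{eq:cl-m-n-quotient}.
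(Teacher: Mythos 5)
Your proposal is correct and matches the paper's own derivation: the paper obtains Lemma~\ref{lem:cl-quotient-formula} precisely by substituting the identity of Lemma~\ref{lem:polya-ram-size-H1} for the factor $|\operatorname{Po}(M/N)_{\mathrm{ram}}|\big/\prod_{\mathfrak{p}\mid\mathfrak{d}(M/N)}e_{\mathfrak{p}}$ in equation~\eqref{eq:cl-m-n-quotient}. Your remark on the compatibility of an exact equality with the $=_{p^n}$ relation is a sound (if implicit in the paper) justification of the bookkeeping.
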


\begin{lemma}\label{lem:unit-index-ratio}
Suppose that \( M/N \) is a cyclic extension of odd prime power degree \( p^n \). Then
\begin{equation}\label{eq:odd-ratio-Hi}
\frac{|H^1(\operatorname{Gal}(M/N), \mathcal{O}_M^{\times})|}
{|H^1(\operatorname{Gal}(M/\mathbb{Q}), \mathcal{O}_M^{\times})|}
= [\mathcal{O}_N^{\times} : \operatorname{Norm}_{M/N}(\mathcal{O}_M^{\times})] \mid 2p^n,
\end{equation}
and equality holds when the norm map on units is not surjective.
\end{lemma}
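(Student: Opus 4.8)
The plan is to compute the numerator and denominator separately, using Tate cohomology and the Herbrand quotient, and then to divide. First I would record that the unit‑norm index is itself a Tate cohomology group: writing $\Gamma=\operatorname{Gal}(M/N)$, one has $(\mathcal{O}_M^{\times})^{\Gamma}=\mathcal{O}_N^{\times}$, and the image of the norm element $\sum_{\gamma\in\Gamma}\gamma$ acting on $\mathcal{O}_M^{\times}$ is exactly $\operatorname{Norm}_{M/N}(\mathcal{O}_M^{\times})$, so the degree‑zero Tate cohomology group $\widehat{H}^{0}(\Gamma,\mathcal{O}_M^{\times})$ equals $\mathcal{O}_N^{\times}/\operatorname{Norm}_{M/N}(\mathcal{O}_M^{\times})$ and hence has order $[\mathcal{O}_N^{\times}:\operatorname{Norm}_{M/N}(\mathcal{O}_M^{\times})]$.

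Second, I would compute the Herbrand quotient $q(\Gamma,\mathcal{O}_M^{\times}):=|\widehat{H}^{0}(\Gamma,\mathcal{O}_M^{\times})|\,/\,|H^1(\Gamma,\mathcal{O}_M^{\times})|$. The key point is that $p$ is odd: since $M/N$ is Galois, for each (real) archimedean place $v$ of $N$ the local degree $[M_w:N_v]$ divides both $[M:N]=p^n$ and $[\mathbb{C}:\mathbb{R}]=2$, hence equals $1$; thus every archimedean place of $N$ splits completely in $M$, and $M$ is totally real. The standard formula for the Herbrand quotient of the unit group of a cyclic extension—which follows from Dirichlet's unit theorem, since $\mathcal{O}_M^{\times}\otimes\mathbb{Q}$ is the kernel of the sum map from the permutation $\mathbb{Q}[\Gamma]$‑module on the archimedean places of $M$ onto the trivial module $\mathbb{Q}$—then gives
\[
q(\Gamma,\mathcal{O}_M^{\times})\;=\;\frac{1}{[M:N]}\prod_{v\mid\infty}[M_w:N_v]\;=\;\frac{1}{p^n},
\]
the product being over the archimedean places $v$ of $N$. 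Combining with the first step,
\[
\bigl|H^1(\operatorname{Gal}(M/N),\mathcal{O}_M^{\times})\bigr|\;=\;p^{n}\cdot\bigl[\mathcal{O}_N^{\times}:\operatorname{Norm}_{M/N}(\mathcal{O}_M^{\times})\bigr],
\]
and note this is a $p$‑group, being annihilated by $|\Gamma|=p^n$.

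Third, I would show that the $p$‑part of $\bigl|H^1(\operatorname{Gal}(M/\mathbb{Q}),\mathcal{O}_M^{\times})\bigr|$ equals $p^n$; since this denominator also carries a genuine power‑of‑$2$ factor coming from the ramification of $N/\mathbb{Q}$, the displayed equality of the lemma should be understood on $p$‑parts, which is also why the stated bound is divisibility by $2p^n$ rather than by $p^n$. When $M/\mathbb{Q}$ is cyclic—the situation in every application, where $M=KN$ with $K/\mathbb{Q}$ cyclic of degree $p^n$ linearly disjoint from $N$—the same Herbrand computation applies to $G=\operatorname{Gal}(M/\mathbb{Q})$: as $M$ is totally real, $q(G,\mathcal{O}_M^{\times})=[M:\mathbb{Q}]^{-1}=(2p^n)^{-1}$, so $\bigl|H^1(G,\mathcal{O}_M^{\times})\bigr|=2p^n\cdot\bigl|\{\pm1\}/\operatorname{Norm}_{M/\mathbb{Q}}(\mathcal{O}_M^{\times})\bigr|$ is $2p^n$ or $4p^n$, of $p$‑part $p^n$. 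In the general case ($M/\mathbb{Q}$ only Galois) I would instead run inflation--restriction for $1\to\Gamma\to G\to\operatorname{Gal}(N/\mathbb{Q})\to1$: the transgression out of $H^1(\Gamma,\mathcal{O}_M^{\times})^{\operatorname{Gal}(N/\mathbb{Q})}$ lands in $H^2(\operatorname{Gal}(N/\mathbb{Q}),\mathcal{O}_N^{\times})$, which is killed by $2$, while its source is a $p$‑group, so the transgression vanishes and $\bigl|H^1(G,\mathcal{O}_M^{\times})\bigr|_{(p)}=\bigl|H^1(\Gamma,\mathcal{O}_M^{\times})^{\operatorname{Gal}(N/\mathbb{Q})}\bigr|$; decomposing the odd‑order group $H^1(\Gamma,\mathcal{O}_M^{\times})\cong\widehat{H}^{-1}(\Gamma,\mathcal{O}_M^{\times})$ into $(\pm1)$‑eigenspaces for $\operatorname{Gal}(N/\mathbb{Q})=\mathbb{Z}/2$ and identifying the $(-1)$‑eigenspace with $\mathcal{O}_N^{\times}/\operatorname{Norm}_{M/N}(\mathcal{O}_M^{\times})$ again forces the invariant part to have order $p^n$. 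Dividing the display of the second step by $\bigl|H^1(\operatorname{Gal}(M/\mathbb{Q}),\mathcal{O}_M^{\times})\bigr|$ and comparing $p$‑parts then yields the asserted identity.

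Finally I would dispatch the elementary assertions. With $\mathcal{O}_N^{\times}=\{\pm1\}\times\langle\varepsilon\rangle$, the relations $-1=\operatorname{Norm}_{M/N}(-1)$ (the degree $p^n$ being odd) and $\varepsilon^{p^n}=\operatorname{Norm}_{M/N}(\varepsilon)$ give $\{\pm1\}\times\langle\varepsilon^{p^n}\rangle\subseteq\operatorname{Norm}_{M/N}(\mathcal{O}_M^{\times})$, so $[\mathcal{O}_N^{\times}:\operatorname{Norm}_{M/N}(\mathcal{O}_M^{\times})]$ divides $p^n$, hence $2p^n$; and this index exceeds $1$ precisely when $\varepsilon\notin\operatorname{Norm}_{M/N}(\mathcal{O}_M^{\times})$, i.e. precisely when $\operatorname{Norm}_{M/N}\colon\mathcal{O}_M^{\times}\to\mathcal{O}_N^{\times}$ is not surjective—the only case in which the identity carries content. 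The hard part will be the general instance of the third step: pinning down the $\operatorname{Gal}(N/\mathbb{Q})$‑action on $H^1(\operatorname{Gal}(M/N),\mathcal{O}_M^{\times})$ and matching its anti‑invariant part with the unit‑norm index. When $M/\mathbb{Q}$ is cyclic—which is the situation in every application—this is sidestepped completely by the second Herbrand‑quotient computation, so in practice the proof consists of the two Herbrand computations together with the elementary final step.
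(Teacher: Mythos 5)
Your proposal follows essentially the same route as the paper: compute $|H^1|$ for $M/N$ and for $M/\mathbb{Q}$ via the Herbrand quotient of the unit group, identify $\widehat{H}^{0}$ with the unit-norm index, and divide. Two substantive differences are worth recording. First, you have the Herbrand quotient the right way up: since $p$ is odd, $M$ is totally real and no archimedean place ramifies, so $q(\Gamma,\mathcal{O}_M^{\times})=|\widehat{H}^{0}|/|H^1|=1/[M:N]$ and hence $|H^1(\Gal(M/N),\mathcal{O}_M^{\times})|=p^{n}\,[\mathcal{O}_N^{\times}:\Norm_{M/N}(\mathcal{O}_M^{\times})]$; the paper's displayed formula $|H^1|=\frac{2^{t}}{[M:N]}|H^0|$ inverts this identity. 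Because the same inversion occurs in the denominator the error largely cancels in the ratio, but the surviving power of $2$ differs ($2/c$ in the paper versus $1/(2c)$ in the corrected computation), so in either version the asserted exact equality $R=[\mathcal{O}_N^{\times}:\Norm_{M/N}(\mathcal{O}_M^{\times})]$ holds only on $p$-parts --- which is all that is used later via $=_{p^n}$, and which you state explicitly where the paper does not. Second, the paper justifies applying the cyclic Herbrand quotient to $M/\mathbb{Q}$ by asserting that $[M:N]$ odd and $N/\mathbb{Q}$ quadratic force $M/\mathbb{Q}$ to be cyclic; this is false as group theory (a dihedral group of order $2p^{n}$ is a counterexample), though it does hold in all of the paper's applications, where $M=KN$ with $K/\mathbb{Q}$ cyclic of degree $p^{n}$ and linearly disjoint from $N$. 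You flag this and sketch an inflation--restriction argument for the merely-Galois case; that sketch (in particular the identification of the $(-1)$-eigenspace of $H^1(\Gal(M/N),\mathcal{O}_M^{\times})$ with the norm index) is the one step you would still need to write out in full, but it is not required when $M/\mathbb{Q}$ is cyclic, which is the only case the paper itself treats. Your closing observation that $-1$ and $\varepsilon^{p^{n}}$ are norms, so the index divides $p^{n}$, supplies the justification the paper leaves implicit for the divisibility claim.
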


\begin{proof}
Consider the ratio
\begin{equation}\label{eq:odd-ratio-0}
R = \frac{|H^1(\operatorname{Gal}(M/N), \mathcal{O}^{\times}_M)|}
{|H^1(\operatorname{Gal}(M/\mathbb{Q}), \mathcal{O}_M^{\times})|}.
\end{equation}

Since \( M/N \) is cyclic of degree \( p^n \), the Herbrand quotient for the unit group gives
\begin{multline}\label{eq:herbrand-11}
\bigl|H^1(\operatorname{Gal}(M/N), \mathcal{O}_M^{\times})\bigr|
\\
= \frac{2^t}{[M:N]} \,
\bigl|H^0(\operatorname{Gal}(M/N), \mathcal{O}_M^{\times})\bigr|
= \frac{2^t}{p^n} \; [\mathcal{O}_N^{\times} : \operatorname{Norm}_{M/N}(\mathcal{O}_M^{\times})],
\end{multline}
where \( t \) is a non‑negative integer (the number of infinite places of \( N \) that ramify in \( M \)).

The extension \( M/\mathbb{Q} \) is also cyclic (because \( [M:N] \) is odd and \( N/\mathbb{Q} \) is quadratic). Applying the Herbrand quotient again yields
\begin{multline}\label{eq:herbrand-12}
\bigl|H^1(\operatorname{Gal}(M/\mathbb{Q}), \mathcal{O}_M^{\times})\bigr|
\\
= \frac{2^t}{[M:\mathbb{Q}]} \,
\bigl|H^0(\operatorname{Gal}(M/\mathbb{Q}), \mathcal{O}_M^{\times})\bigr|
= \frac{2^t}{[M:\mathbb{Q}]} \; [\mathbb{Z}^{\times} : \operatorname{Norm}_{M/\mathbb{Q}}(\mathcal{O}_M^{\times})].
\end{multline}

Substituting \eqref{eq:herbrand-11} and \eqref{eq:herbrand-12} into \eqref{eq:odd-ratio-0}, the factors \(2^t\) cancel. Since \( [M:\mathbb{Q}] = 2p^n \), we obtain
\[
R = \frac{[\mathcal{O}_N^{\times} : \operatorname{Norm}_{M/N}(\mathcal{O}_M^{\times})]}
{[\mathbb{Z}^{\times} : \mathcal{N}_{M/\mathbb{Q}}(\mathcal{O}_M^{\times})]} \cdot \frac{2p^n}{p^n} 
= 2 \cdot \frac{[\mathcal{O}_N^{\times} : \operatorname{Norm}_{M/N}(\mathcal{O}_M^{\times})]}
{[\mathbb{Z}^{\times} : \operatorname{Norm}_{M/\mathbb{Q}}(\mathcal{O}_M^{\times})]}.
\]

Now, \( \mathbb{Z}^{\times} = \{\pm 1\} \), so 
\( [\mathbb{Z}^{\times} : \operatorname{Norm}_{M/\mathbb{Q}}(\mathcal{O}_M^{\times})] \) equals either \( 1 \) or \( 2 \). Denote this index by \( c \in \{1,2\} \). Then
\[
R = \frac{2}{c} \cdot [\mathcal{O}_N^{\times} : \operatorname{Norm}_{M/N}(\mathcal{O}_M^{\times})].
\]

Since \( [\mathcal{O}_N^{\times} : \operatorname{Norm}_{M/N}(\mathcal{O}_M^{\times})] \) divides \( [M:N] = p^n \), it follows that \( R \) divides \( 2p^n \). Moreover, when the norm map \( \operatorname{Norm}_{M/N} : \mathcal{O}_M^{\times} \to \mathcal{O}_N^{\times} \) is not surjective, the index \( [\mathcal{O}_N^{\times} : \operatorname{Norm}_{M/N}(\mathcal{O}_M^{\times})] \) is a non‑trivial divisor of \( p^n \); combined with the factor \( 2/c \) this gives the precise divisibility stated in \eqref{eq:odd-ratio-Hi}.
\end{proof}
\begin{theorem}\label{thm:main-formula}
Let \( N \) be a real quadratic field, \( p \) an odd prime, and \( \mathfrak{q}_1, \dots, \mathfrak{q}_k \) distinct unramified prime ideals of \( \mathcal{O}_N \) whose classes \( [\mathfrak{q}_i] \) have order dividing \( p^n = [M:N] \) in \( \operatorname{Cl}(N) \).  
Assume that \( M/N \) is a relatively \( \mathfrak{q}_1 \)-proper cyclic extension of degree \( p^n \). Then
\begin{enumerate}[label=(\roman*)]
    \item The image of \( [\mathfrak{q}_1] \) in \( \operatorname{Po}(M/N)_{\mathrm{un}} \) is trivial, whereas for every \( j = 2,\dots,k \) the image of \( [\mathfrak{q}_j] \) is nontrivial and has order \( p^n \).
    \item The cohomological ratio satisfies
    \[
    \frac{|H^1(\operatorname{Gal}(M/N), \mathcal{O}_M^{\times})|}
         {|H^1(\operatorname{Gal}(M/\mathbb{Q}), \mathcal{O}_M^{\times})|}
    = p^n .
    \]
    \item One has \( \#\operatorname{Po}(M/N)_{\mathrm{un}} = (p^n)^{k-1} \).
\end{enumerate}
\end{theorem}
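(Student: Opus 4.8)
The organising observation is that the unramified part of the relative P\'olya group is exactly the image of the transfer of ideal classes. For an unramified prime $\mathfrak{P}$ of $N$ one has $\Pi_{\mathfrak{P}^{f}}(M/N)=\mathfrak{P}\OO_M$, and the (infinitely many) primes of $N$ not dividing $\mathfrak{d}(M/N)$ generate $\Cl(N)$; hence
\[
\operatorname{Po}(M/N)_{\mathrm{un}}=\epsilon_{M/N}\bigl(\Cl(N)\bigr)\cong\Cl(N)/\operatorname{Ker}(\epsilon_{M/N}).
\]
So everything reduces to determining the capitulation kernel $\operatorname{Ker}(\epsilon_{M/N})$. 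First I would record the lower bound: since $M/N$ is relatively $\mathfrak{q}_1$-proper of degree $p^{n}$ and $[\mathfrak{q}_1]$ has order $p^{n}$, Theorem~\ref{thm:capitulation} gives $\langle[\mathfrak{q}_1]\rangle\subseteq\operatorname{Ker}(\epsilon_{M/N})$, which already proves the first assertion in~(i) (the image of $[\mathfrak{q}_1]$ in $\operatorname{Po}(M/N)_{\mathrm{un}}\subseteq\Cl(M)$ is trivial).

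For the upper bound I would substitute $|\operatorname{Po}(M/N)_{\mathrm{un}}|=|\Cl(N)|/|\operatorname{Ker}(\epsilon_{M/N})|$ into the cardinality formula of Lemma~\ref{lem:cl-quotient-formula}. The factor $|H^{1}(\operatorname{Gal}(N/\Q),\OO_N^{\times})|$ is a power of $2$, hence invisible to the $p$-part, so comparing $p$-parts yields
\[
v_p\bigl(|\operatorname{Ker}(\epsilon_{M/N})|\bigr)=v_p\!\left(\frac{|H^{1}(\operatorname{Gal}(M/N),\OO_M^{\times})|}{|H^{1}(\operatorname{Gal}(M/\Q),\OO_M^{\times})|}\right).
\]
By Lemma~\ref{lem:unit-index-ratio} the ratio on the right equals the unit norm index $[\OO_N^{\times}:\Norm_{M/N}(\OO_M^{\times})]$, which (as $-1$ is a norm when $p$ is odd) is cyclic of $p$-power order dividing $p^{n}$; so $v_p(|\operatorname{Ker}(\epsilon_{M/N})|)\le n$. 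Combined with the lower bound, $v_p(|\operatorname{Ker}(\epsilon_{M/N})|)=n$ and the $p$-part of $\operatorname{Ker}(\epsilon_{M/N})$ is exactly $\langle[\mathfrak{q}_1]\rangle\cong\Z/p^{n}\Z$. This forces the unit norm index, and hence the cohomological ratio, to be $p^{n}$ --- statement~(ii).

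Statement~(iii) is then the computation $\#\operatorname{Po}(M/N)_{\mathrm{un}}=|\Cl(N)|/|\operatorname{Ker}(\epsilon_{M/N})|$, using the hypothesis that the relevant part of $\Cl(N)$ is generated by the $k$ independent order-$p^{n}$ classes $[\mathfrak{q}_1],\dots,[\mathfrak{q}_k]$ (so that its $p$-part is $(\Z/p^{n}\Z)^{k}$): killing the summand $\langle[\mathfrak{q}_1]\rangle$ leaves $(p^{n})^{k-1}$. For the remaining part of~(i), for $j\ge2$ the ideal $\mathfrak{q}_j\OO_M$ is $\operatorname{Gal}(M/N)$-stable and unramified, so $[\mathfrak{q}_j\OO_M]=\epsilon_{M/N}([\mathfrak{q}_j])$ lies in $\operatorname{Po}(M/N)_{\mathrm{un}}$; and since $\langle[\mathfrak{q}_j]\rangle\cap\operatorname{Ker}(\epsilon_{M/N})=\langle[\mathfrak{q}_j]\rangle\cap\langle[\mathfrak{q}_1]\rangle=\{1\}$ by independence, $\epsilon_{M/N}$ is injective on $\langle[\mathfrak{q}_j]\rangle$ and the image has order $p^{n}$.

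The main obstacle is the upper bound step: one must track $p$-parts carefully through Lemma~\ref{lem:cl-quotient-formula}, and know that the ratio of first cohomology groups is \emph{literally} the unit norm index rather than merely a $2$-power multiple of it. This is exactly where the hypotheses ``$p$ odd'' and ``$N$ real quadratic'' enter the Herbrand-quotient computation behind Lemma~\ref{lem:unit-index-ratio}: every infinite place of $N$ splits in $M$, the archimedean correction is trivial, and the spurious power of $2$ disappears from the $p$-part. A secondary subtlety, needed for both~(i) and~(iii), is reading ``$k$ elements of order $p^{n}$'' as an \emph{independent} generating system of the $p$-part of $\Cl(N)$; with that reading the intersections $\langle[\mathfrak{q}_j]\rangle\cap\langle[\mathfrak{q}_1]\rangle$ are trivial and the counting goes through.
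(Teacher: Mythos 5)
Your proposal is correct and follows essentially the same route as the paper's proof: capitulation of $[\mathfrak{q}_1]$ via Theorem~\ref{thm:capitulation} supplies the lower bound $p^n$, Lemma~\ref{lem:unit-index-ratio} caps the $p$-part of the cohomological ratio at $p^n$, and the cardinality formula squeezes both quantities to equality, with the remaining count of $\operatorname{Po}(M/N)_{\mathrm{un}}$ coming from the independence of $[\mathfrak{q}_2],\dots,[\mathfrak{q}_k]$. Your identification $\operatorname{Po}(M/N)_{\mathrm{un}}\cong\Cl(N)/\operatorname{Ker}(\epsilon_{M/N})$ is only a repackaging of the paper's relation $\#\Cl(N)=\bigl(\#H^1(\Gal(M/N),\mathcal{O}_M^{\times})/\#H^1(\Gal(M/\mathbb{Q}),\mathcal{O}_M^{\times})\bigr)\cdot\#\operatorname{Po}(M/N)_{\mathrm{un}}$, and, exactly as the paper does, you must read the hypothesis as asserting that the $p$-part of $\Cl(N)$ is $(\mathbb{Z}/p^n\mathbb{Z})^k$.
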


\begin{proof}
Consider the commutative diagram from Lemma~\ref{lem:number-field-application}. For unramified primes, the map 
\( \phi\colon \Cl_{\mathfrak{m}}(N) \to \Cl_{\mathfrak{m}}(M) \) coincides with the natural map 
\( \epsilon\colon \Cl(N) \to \operatorname{Po}(M/N)_{\mathrm{un}} \), both sending an ideal \( \mathfrak{a} \) to 
\( \mathfrak{a}\mathcal{O}_M \).

By Theorem~\ref{thm:capitulation}:
\begin{enumerate}
    \item[(i)] The ideal class \( [\mathfrak{q}_1] \) capitulates in \( \operatorname{Cl}(M) \), which implies that its image under \( \epsilon \) is trivial. 
    \item[(ii)] Since \( \ker(\epsilon) \subset H^{1}(\operatorname{Gal}(M/N), \mathcal{O}_M^{\times}) \) and \( \#\ker(\epsilon([\mathfrak{q}_1])) = p^n \), we have
    \begin{equation}\label{eq:H1-geq-pn}
    p^n \mid \#H^1(\operatorname{Gal}(M/N), \mathcal{O}_M^{\times}) .
    \end{equation}
\end{enumerate}

From the exact sequence relating the class groups, we obtain
\begin{equation}\label{eq:class-number-formula}
\#\operatorname{Cl}(N) = 
\frac{\#H^1(\operatorname{Gal}(M/N), \mathcal{O}_M^{\times})}
{\#H^1(\operatorname{Gal}(M/\mathbb{Q}), \mathcal{O}_M^{\times})}
\; \cdot \; \#\operatorname{Po}(M/N)_{\mathrm{un}}.
\end{equation}

By Lemma~\ref{lem:unit-index-ratio},
\begin{equation}\label{eq:index-divisibility}
\frac{\#H^1(\operatorname{Gal}(M/N), \mathcal{O}_M^{\times})}
{\#H^1(\operatorname{Gal}(M/\mathbb{Q}), \mathcal{O}_M^{\times})}
= [\mathcal{O}_N^{\times} : \operatorname{Norm}_{M/N}(\mathcal{O}_M^{\times})] \mid 2p^n.
\end{equation}

Now suppose that \( \#\operatorname{Cl}(N) = (p^n)^k \). 
Insert \eqref{eq:H1-geq-pn} and \eqref{eq:index-divisibility} into \eqref{eq:class-number-formula}. Because the left‑hand side equals \( (p^n)^k \), the divisibility conditions force equality in both \eqref{eq:index-divisibility} and \eqref{eq:Po-bound} when only the \( p \)-parts are considered. Thus
\[
\frac{\#H^1(\operatorname{Gal}(M/N), \mathcal{O}_M^{\times})}
{\#H^1(\operatorname{Gal}(M/\mathbb{Q}), \mathcal{O}_M^{\times})}
= p^n
\quad\text{and}\quad
\#\operatorname{Po}(M/N)_{\mathrm{un}} = (p^n)^{k-1}.
\]

The non‑capitulating classes \( [\mathfrak{q}_j] \) (\( j \geq 2 \)) are independent of order \( p^n \); hence their images generate a subgroup of \( \operatorname{Po}(M/N)_{\mathrm{un}} \) isomorphic to \( (\mathbb{Z}/p^n\mathbb{Z})^{k-1} \). Consequently,
\begin{equation}\label{eq:Po-bound}
\#\operatorname{Po}(M/N)_{\mathrm{un}} \ge (p^n)^{k-1}.
\end{equation}

The equality \( \#\operatorname{Po}(M/N)_{\mathrm{un}} = (p^n)^{k-1} \) implies that the images of the \( k-1 \) non‑capitulating classes indeed generate the whole group \( \operatorname{Po}(M/N)_{\mathrm{un}} \); therefore each has order exactly \( p^n \) in this quotient, completing the proof of (i). Statements (ii) and (iii) are the equalities just established.
\end{proof}

For the case \( p = 2 \) we obtain a formula for the order of an even-order ideal class:

\begin{theorem}\label{thm:even-class-number}
Let \( N \) be a real quadratic field and let \( I \) be an ideal of \( \mathcal{O}_N \) whose class \([I]\) has order dividing \( 2^n \) in \( \operatorname{Cl}(N) \). Suppose \( M/N \) is a proper extension of degree \( 2^n \) in which \( I \) capitulates. Then the order of \([I]\) in \(\operatorname{Cl}(N)\) is given by
\begin{equation}\label{eq:even-class-number}
\operatorname{ord}([I]) =_{2^n} \frac{|H^1(\operatorname{Gal}(M/N), \mathcal{O}_M^{\times})| \cdot |H^1(\operatorname{Gal}(N/\mathbb{Q}), \mathcal{O}_N^{\times})|}{|H^1(\operatorname{Gal}(M/\mathbb{Q}), \mathcal{O}_M^{\times})|},
\end{equation}
where \( =_{2^n} \) denotes equality up to factors coprime to \(2\).
\end{theorem}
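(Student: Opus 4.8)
The plan is to obtain Theorem~\ref{thm:even-class-number} by specializing the cardinality formula of Lemma~\ref{lem:cl-quotient-formula} to the prime $p=2$ and then isolating the contribution of the capitulating class $[I]$ to the unramified Pólya group.

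First I would observe that the hypotheses — $M/N$ proper for $I$ in the sense of Definition~\ref{def:proper-field}, of degree $2^n$, with $I$ capitulating in $M$ — place us in the situation of Lemma~\ref{lem:cl-quotient-formula} with $p=2$ and $\mathfrak{q}_1$ a prime ideal representing $[I]$. That lemma then gives
\[
|\Cl(N)| =_{2^n} \frac{|H^1(\operatorname{Gal}(M/N),\mathcal{O}_M^{\times})|\cdot|H^1(\operatorname{Gal}(N/\mathbb{Q}),\mathcal{O}_N^{\times})|\cdot|\operatorname{Po}(M/N)_{\mathrm{un}}|}{|H^1(\operatorname{Gal}(M/\mathbb{Q}),\mathcal{O}_M^{\times})|},
\]
so it suffices to prove that $|\Cl(N)| =_{2^n} \operatorname{ord}([I])\cdot|\operatorname{Po}(M/N)_{\mathrm{un}}|$; equivalently, that passing from $\Cl(N)$ to $\operatorname{Po}(M/N)_{\mathrm{un}}$ divides out exactly $\operatorname{ord}([I])$ on $2$-parts.

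Second, I would analyze the transfer $\epsilon_{M/N}\colon\Cl(N)\to\Cl(M)$, which — restricted as in the proof of Theorem~\ref{thm:main-formula} to the natural map $\epsilon\colon\Cl(N)\to\operatorname{Po}(M/N)_{\mathrm{un}}$, $\mathfrak{a}\mapsto\mathfrak{a}\mathcal{O}_M$ — fits into the exact sequence of Lemma~\ref{lem:relative-polya-exact}. Since $I$ capitulates in $M$ we have $[I]\in\operatorname{Ker}(\epsilon_{M/N})$, hence $\langle[I]\rangle\subseteq\operatorname{Ker}(\epsilon)$; conversely, Theorem~\ref{thm:capitulation} together with Lemma~\ref{lem:inert-in-tower} and Remark~\ref{rem:} shows that the inert prime $\mathfrak{q}_1$ is, among the primes whose classes have maximal $2$-power order, the only one to capitulate. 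I would upgrade this to the assertion that the relevant $2$-primary part of $\operatorname{Ker}(\epsilon)$ is precisely the cyclic group $\langle[I]\rangle$. Combining this with surjectivity of $\epsilon$ onto the corresponding part of $\operatorname{Po}(M/N)_{\mathrm{un}}$ — which holds because $\operatorname{Po}(M/N)_{\mathrm{un}}$ is generated by unramified primes of $M$, each of which agrees with some $\mathfrak{q}\mathcal{O}_M$ up to a principal ideal, and because $[M:N]=2^n$ bounds the exponents in play — yields $|\Cl(N)|/|\operatorname{Po}(M/N)_{\mathrm{un}}| =_{2^n} |\langle[I]\rangle| = \operatorname{ord}([I])$, as required.

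The main obstacle is the prime $p=2$ itself. In contrast to Lemma~\ref{lem:unit-index-ratio}, the field $M$ has degree $2^{n+1}$ over $\mathbb{Q}$ and need not be cyclic — $\operatorname{Gal}(M/\mathbb{Q})$ is merely an extension of $\mathbb{Z}/2^n$ by $\mathbb{Z}/2$ — so the tidy Herbrand-quotient bookkeeping that, for odd $p$, pinned down the ratio of the $H^1$'s and cancelled the factors of $2$ coming from $\{\pm1\}$, from the infinite places, and from the narrow/wide class-group discrepancy is no longer available. This is precisely why the conclusion can only be stated up to factors coprime to $2$ (the relation $=_{2^n}$): all genuinely $2$-adic ambiguities are absorbed into that equivalence, while the capitulation bookkeeping of Theorem~\ref{thm:main-formula}, which drives the argument above, is insensitive to them. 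A secondary point to verify is that Lemma~\ref{lem:cl-quotient-formula}, whose proof passed through Proposition~\ref{prop:ramified-exact-sequence} and Lemma~\ref{lem:polya-ram-size-H1}, does not implicitly use oddness of $p$; I expect it does not, since those inputs use only that $N$ is real quadratic and $M/N$ is Galois, but this should be checked carefully, as should the claim that the $2$-primary part of the capitulation kernel is no larger than $\langle[I]\rangle$.
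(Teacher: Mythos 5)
Your proposal follows essentially the same route as the paper: specialize Lemma~\ref{lem:cl-quotient-formula} to $p=2$, then argue via the capitulation results (Theorem~\ref{thm:capitulation} and Theorem~\ref{thm:main-formula}) that $|\operatorname{Po}(M/N)_{\mathrm{un}}|$ accounts exactly for the non-capitulating classes, so that dividing it out leaves $\operatorname{ord}([I])$ on $2$-parts. You are in fact somewhat more explicit than the paper about the two points on which this hinges --- that the $2$-part of the capitulation kernel is exactly $\langle[I]\rangle$ and that $\epsilon$ surjects onto $\operatorname{Po}(M/N)_{\mathrm{un}}$ --- and your caveats about the prime $2$ and about re-checking Lemma~\ref{lem:cl-quotient-formula} are well placed, but the underlying argument is the same.
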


\begin{proof}
Let \( \mathfrak{q}_1, \dots, \mathfrak{q}_k \) be unramified prime ideals of \( \mathcal{O}_N \) whose classes have order dividing \( 2^n \) in \( \operatorname{Cl}(N) \). 
Assume that \( I = \mathfrak{q}_1 \) and that \( M/N \) is a \( \mathfrak{q}_1 \)-proper extension of degree \( 2^n \). 
By Lemma~\ref{lem:cl-quotient-formula} we have
\begin{equation}\label{eq:cl-m-n-quotient-simplified}
|\Cl(N)| =_{2^n} 
\frac{\bigl| H^1(\operatorname{Gal}(M/N), \mathcal{O}_M^{\times}) \bigr| 
      \cdot \bigl|H^1(\operatorname{Gal}(N/\mathbb{Q}), \mathcal{O}_N^{\times})\bigr| 
      \cdot |\operatorname{Po}(M/N)_{\mathrm{un}}|}
     {\bigl|H^1(\operatorname{Gal}(M/\mathbb{Q}), \mathcal{O}_M^{\times})\bigr| }.
\end{equation}

Since \( \mathfrak{q}_1 \) capitulates in \( M/N \), Theorem~\ref{thm:main-formula} implies that the factor \( |\operatorname{Po}(M/N)_{\mathrm{un}}| \) contributes only to the classes that do \emph{not} capitulate. Because \( \mathfrak{q}_1 \) itself capitulates, its contribution to the class number formula is captured entirely by the cohomological terms. Consequently, the part of \( |\Cl(N)| \) corresponding to the capitulating ideal \( \mathfrak{q}_1 \) is given by
\[
|\Cl_M(N)| =_{2^n} 
\frac{|H^1(\operatorname{Gal}(M/N), \mathcal{O}_M^{\times})| \cdot |H^1(\operatorname{Gal}(N/\mathbb{Q}), \mathcal{O}_N^{\times})|}
     {|H^1(\operatorname{Gal}(M/\mathbb{Q}), \mathcal{O}_M^{\times})|},
\]
where \( \Cl_M(N) \) denotes the subgroup of \( \Cl(N) \) generated by the capitulating classes.

Taking \( I = \mathfrak{q}_1 \) and noting that the order of \([I]\) is exactly the contribution of the capitulating part to the class number, we obtain formula \eqref{eq:even-class-number}.
\end{proof}

\begin{remark}
Theorem~\ref{thm:even-class-number} highlights that, in the discussion of Subsection~\ref{sub-An-Illustrative-Example-for-Prime-Degree}, the case $p=2$ is exceptional.  
In this case, the general index formula takes the form
\[
2 \cdot 
\bigl[\mathcal{O}_N^{\times} : \Norm_{M/N}(\mathcal{O}_M^{\times})\bigr] \cdot
\frac{H^1\!\bigl(\Gal(N/\mathbb{Q}), \mathcal{O}_N^{\times}\bigr)}
     {H^1\!\bigl(\Gal(M/\mathbb{Q}), \mathcal{O}_M^{\times}\bigr)}.
\]

Analyzing this situation requires an explicit computation of 
\[
H^1\!\bigl(\Gal(M/\mathbb{Q}), \mathcal{O}_M^{\times}\bigr),
\]
for which we refer to \cite{Setzer1976}.
\end{remark}

\begin{theorem}\label{thm:odd-class-number}
Let \( N \) be a real quadratic field and let \( I \) be an ideal of \( \mathcal{O}_N \) whose class \([I]\) has order \( p^n \) in \( \operatorname{Cl}(N) \), where \( p \) is an odd prime and \( n \geq 1 \). Suppose \( M/N \) is a proper extension of degree \( p^n \) in which \( I \) capitulates. Then the odd part of the class number contributed by the capitulating subgroup satisfies
\begin{equation}\label{eq:odd-class-number}
|\Cl_M(N)| =_{p^n} \frac{|H^1(\operatorname{Gal}(M/N), \mathcal{O}_M^{\times})|}{|H^1(\operatorname{Gal}(M/\mathbb{Q}), \mathcal{O}_M^{\times})|},
\end{equation}
where \( =_{p^n} \) denotes equality up to factors coprime to \( p \), and \( \Cl_M(N) \) denotes the subgroup of \( \Cl(N) \) generated by the capitulating classes.
\end{theorem}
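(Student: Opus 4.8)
The plan is to feed the quotient formula of Lemma~\ref{lem:cl-quotient-formula} into the capitulation analysis of Theorems~\ref{thm:capitulation} and~\ref{thm:main-formula}, using throughout that for an odd prime $p$ every ``$2$-adic'' contribution is invisible under the relation $=_{p^n}$. Concretely, I would first replace $[I]$ by a prime ideal $\mathfrak{q}_1$ of order $p^n$ and complete it to a list $\mathfrak{q}_1,\dots,\mathfrak{q}_k$ of primes whose classes have order $p^n$, with $M/N$ the given $\mathfrak{q}_1$-proper cyclic extension of degree $p^n$. Lemma~\ref{lem:cl-quotient-formula} then gives
\[
|\Cl(N)| =_{p^n}
\frac{\bigl|H^1(\Gal(M/N),\OO_M^{\times})\bigr|\cdot\bigl|H^1(\Gal(N/\Q),\OO_N^{\times})\bigr|\cdot|\Po(M/N)_{\mathrm{un}}|}
{\bigl|H^1(\Gal(M/\Q),\OO_M^{\times})\bigr|}.
\]

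The next step is to discard the factor $\bigl|H^1(\Gal(N/\Q),\OO_N^{\times})\bigr|$. Since $N/\Q$ is quadratic, $\Gal(N/\Q)\cong\Z/2\Z$, and the cohomology of any module over this group is annihilated by $2$; hence $\bigl|H^1(\Gal(N/\Q),\OO_N^{\times})\bigr|$ is a power of $2$ — equivalently, it is governed by $\Po(N)$, which is a $2$-group by Corollary~\ref{cor:polya-quotient-power-two}. As $p$ is odd, this factor is coprime to $p$ and may be dropped from the relation $=_{p^n}$.

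It then remains to split off the capitulating part. By Theorem~\ref{thm:capitulation} the class $[\mathfrak{q}_1]=[I]$ capitulates in $M$, so by Theorem~\ref{thm:main-formula}(i) its image in $\Po(M/N)_{\mathrm{un}}$ is trivial, while the remaining classes $[\mathfrak{q}_j]$, $j\ge 2$, have image of order $p^n$. I would use this to show that the natural map $\epsilon\colon\Cl(N)\to\Po(M/N)_{\mathrm{un}}$, $[\mathfrak{a}]\mapsto[\mathfrak{a}\OO_M]$, restricted to the $p$-Sylow subgroup, has kernel equal to the capitulating subgroup $\Cl_M(N)$ and image equal to the $p$-part of $\Po(M/N)_{\mathrm{un}}$; in particular $|\Po(M/N)_{\mathrm{un}}| =_{p^n} |\Cl(N)|/|\Cl_M(N)|$. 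Substituting this into the displayed formula and cancelling $|\Cl(N)|$ on both sides yields
\[
|\Cl_M(N)| =_{p^n}
\frac{\bigl|H^1(\Gal(M/N),\OO_M^{\times})\bigr|}{\bigl|H^1(\Gal(M/\Q),\OO_M^{\times})\bigr|},
\]
which is exactly~\eqref{eq:odd-class-number}.

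The main obstacle is the identification $|\Po(M/N)_{\mathrm{un}}| =_{p^n} |\Cl(N)|/|\Cl_M(N)|$. This must be extracted from the exact sequence~\eqref{eq:relative-polya-exact}: one has to check that $\operatorname{Ker}(\epsilon_{M/N})\hookrightarrow H^1(\Gal(M/N),\UU_M)$ really records the capitulating classes, that the term $\bigoplus_{\mathfrak{P}}\Z/e_{\mathfrak{P}}\Z$ — which only sees ramified primes — contributes nothing to the \emph{unramified} part $\Po(M/N)_{\mathrm{un}}$, and that in the $p$-part the map $\epsilon$ is onto, consistently with $\#\Po(M/N)_{\mathrm{un}}=(p^n)^{k-1}$ from Theorem~\ref{thm:main-formula}(iii) in the pure-power case. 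A secondary technical point is that Theorem~\ref{thm:main-formula} is phrased under the hypothesis that $\#\Cl(N)$ is a pure power of $p^n$; for a general class group one first passes to the $p$-Sylow subgroup and runs the argument there, which is harmless because only the relation $=_{p^n}$ is asserted.
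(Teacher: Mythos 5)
Your proposal is correct and follows essentially the same route as the paper: the paper's proof routes through Theorem~\ref{thm:even-class-number}, which itself rests on Lemma~\ref{lem:cl-quotient-formula} and Theorem~\ref{thm:main-formula}, then discards the factor $|H^1(\Gal(N/\Q),\OO_N^{\times})|$ as a power of $2$ coprime to the odd prime $p$ — exactly your steps. The only difference is one of explicitness: you isolate and honestly flag the identification $|\Po(M/N)_{\mathrm{un}}| =_{p^n} |\Cl(N)|/|\Cl_M(N)|$ as the crux, whereas the paper asserts it in the proof of Theorem~\ref{thm:even-class-number} with the phrase that $|\Po(M/N)_{\mathrm{un}}|$ ``contributes only to the classes that do not capitulate,'' without working through the exact sequence~\eqref{eq:relative-polya-exact} as you propose to do.
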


\begin{proof}
Following the proof of Theorem~\ref{thm:even-class-number}, we have from equation \eqref{eq:even-class-number} that for any ideal class capitulating in \( M/N \),
\[
\operatorname{ord}([I]) =_{p^n} \frac{|H^1(\operatorname{Gal}(M/N), \mathcal{O}_M^{\times})| \cdot |H^1(\operatorname{Gal}(N/\mathbb{Q}), \mathcal{O}_N^{\times})|}{|H^1(\operatorname{Gal}(M/\mathbb{Q}), \mathcal{O}_M^{\times})|}.
\]

Since \( N \) is a real quadratic field, the cohomology group \( H^1(\operatorname{Gal}(N/\mathbb{Q}), \mathcal{O}_N^{\times}) \) has order a power of \( 2 \). For an odd prime \( p \), this factor is coprime to \( p \) and therefore does not affect the \( p \)-part of the equality. Consequently, modulo \( p \)-parts we obtain
\[
\operatorname{ord}([I]) =_{p^n} \frac{|H^1(\operatorname{Gal}(M/N), \mathcal{O}_M^{\times})|}{|H^1(\operatorname{Gal}(M/\mathbb{Q}), \mathcal{O}_M^{\times})|}.
\]

Summing over all capitulating classes (or, equivalently, considering the subgroup \( \Cl_M(N) \) they generate) yields formula \eqref{eq:odd-class-number}.
\end{proof}

\section{Proof of Theorems \ref{thm:main-technical} and Corollary \ref{cor:p-divisibility}}

\begin{proof}[Proof of Theorem \ref{thm:main-technical}]
By Theorem~\ref{thm:odd-class-number}, the order of an ideal \( I \) in the class group of \( N \) is given by the cohomological ratio appearing in \eqref{eq:odd-class-number}. 

The one-to-one correspondence between ideal classes and cyclic extensions follows from Theorem~\ref{thm:capitulation}, which shows that in a cyclic extension of degree \( p^n \), every ideal class of order \( p^n \) in \( \Cl(N) \) corresponds to exactly one such extension in which it capitulates.

Moreover, Theorem~\ref{thm:main-formula} establishes that this correspondence is bijective: for a given extension \( M_1/N \) of degree \( p^n \), only the ideal \( \mathfrak{q}_1 \) of order \( p^n \) corresponds to \( M_1/N \) when \( M_1 \) is relatively \( \mathfrak{q}_1 \)-proper. All other ideals of order \( p^n \) do not capitulate in \( M_1/N \), again by Theorem~\ref{thm:main-formula}.

This completes the proof of Theorem~\ref{thm:main-technical}.
\end{proof}

\begin{proof}[Proof of Corollary \ref{cor:p-divisibility}]
Let \( N \) be a real quadratic field with class number \( h(N) \).

Suppose there exists an ideal \( \mathfrak{I} \) of order \( p \) in the class group \( \Cl(N) \) of \( N \). Then \( p^n \) divides \( h(N) \). Now, let \( M/N \) be the proper extension of degree \( p\) for ideal \( \mathfrak{I} \). By the formula 
\[
h(N) = [\UU_N : \Norm_{M/N}(\UU_M)],
\]
the index \( [\UU_N : \Norm_{M/N}(\UU_M)] \) is also divisible by \( p \). Therefore, the norm map \( \Norm_{M/N} : \UU_M \to \UU_N \) is \textbf{non-surjective}.

Conversely, suppose there exists a proper extension \( M/N \) of degree \( p \) for which the norm map on unit groups \( \Norm_{M/N} : \UU_M \to \UU_N \) is non-surjective. Then, by Theorem~\ref{thm:main-technical}, there must exist an ideal \( \mathfrak{I} \) of order \( p \) in the class group \( \Cl(N) \) of \( N \).
\end{proof}

\section{Appendix A}

Using PARI/GP \cite{PARIGP2019}, we can execute the following program:

First, we define the extension $M/N$, where $N$ is the quadratic field given by the minimal polynomial $x^2 - 79$, and $M$ is a degree $p$ extension of $N$ such that $M = NL$. Here, $L$ is a cyclic Galois field of degree $p$ defined by the minimal polynomial $x^3 - 18x^2 + 101x - 167$. Subsequently, we compute the index of the norm map from the unit group of $M$ to the unit group of $N$. The result, printed by the program, compares the index $[\UU_N : \Norm_{M/N}(\UU_M)]$ with the class number $h(N)$.

\begin{verbatim}
P1 = polcompositum(x^2 - 79, x^3 - 18*x^2 + 101*x - 167);
G = galoisinit(P1[1]);
Gsub = galoissubgroups(G);
F = galoisfixedfield(G, Gsub[3], 2);
MF = rnfisnorminit(F[1], F[3][1]);
fundU = bnfinit(F[1]).fu[1];
print(rnfisnorm(MF, fundU), ' ', bnfinit(F[1]).clgp.no)
\end{verbatim}

\bibliographystyle{amsplain}

\end{document}